\title[]{The Dirichlet Problem for the $k$-Hessian Equation on a complex manifold}
\author[T. C. Collins]{Tristan C. Collins}
  \email{tristanc@mit.edu}
  \address{Department of Mathematics, Massachusetts Institute of Technology, 77 Massachusetts Avenue, Cambridge, MA 02139}
 \thanks{T.C.C is supported in part by NSF grant DMS-1810924 and an Alfred P. Sloan Fellowship. }
 \author[S. Picard]{Sebastien Picard}
  \email{spicard@math.harvard.edu}
  \address{Department of Mathematics, Harvard University, 1 Oxford St.,Cambridge, MA, 02138}
  \thanks{}  
\theoremstyle{plain}
\newtheorem{thm}{Theorem}[section]
\newtheorem{prop}[thm]{Proposition}
\newtheorem{lem}[thm]{Lemma}
\theoremstyle{definition}
\numberwithin{equation}{section}
\newcommand{\del}{\partial}
\newcommand{\dbar}{\overline{\del}}
\newcommand{\ddb}{\sqrt{-1}\del\dbar}
\newcommand{\F}{\mathcal{F}}
\newcommand{\p}{\partial}
\newcommand{\be}{\begin{equation}}
\newcommand{\bea}{\begin{eqnarray}}
\newcommand{\eea}{\end{eqnarray}} 
\newcommand{\ee}{\end{equation}}
\renewcommand{\leq}{\leqslant}
\renewcommand{\geq}{\geqslant}
\renewcommand{\epsilon}{\varepsilon}
\renewcommand{\phi}{\varphi}
\begin{document}

\maketitle

\begin{abstract}
We solve the Dirichlet problem for $k$-Hessian equations on compact complex manifolds with boundary, given the existence of a subsolution. Our method is based on a second order a priori estimate of the solution on the boundary with a particular gradient scale. The scale allows us to apply a blow-up argument to obtain control on all necessary norms of the solution.
  
\end{abstract}

\section{Introduction}
\par Nonlinear partial differential equations involving elementary symmetric polynomials appear throughout differential geometry. We start by describing the model setup, and then specialize to the setting of the current paper and discuss how the $\sigma_k$ operator arises in complex differential geometry.
\smallskip
\par Let $\lambda = (\lambda_1, \dots, \lambda_n) \in \mathbb{R}^n$. For $k \in \{1, \dots, n\}$, we will denote the $k$-th elementary symmetric polynomial by
\[
\sigma_k(\lambda) = \sum_{1 \leq j_1 < j_2 < \cdots < j_k \leq n} \lambda_{j_1} \cdots \lambda_{j_n}. 
\]
As it will be convenient later, we will also sometimes use the convention $\sigma_0(\lambda)=1$ and $\sigma_\ell(\lambda)=0$ for $\ell > n$.
\smallskip
\par Let $\Omega \subset \mathbb{R}^n$ be a bounded domain with smooth boundary. Let $\psi : \Omega \rightarrow \mathbb{R}$ with $\psi>0$ and $\phi: \partial \Omega \rightarrow \mathbb{R}$ be given smooth functions. The Dirichlet problem for the $\sigma_k$ operator seeks a function $u: \Omega \rightarrow \mathbb{R}$ solving
\begin{equation} \label{model-sigma-k}
\begin{aligned}
\sigma_k(\lambda) &= \psi(x),\\
u|_{\del \Omega} &= \phi.
\end{aligned}
\end{equation}
where $\lambda=(\lambda_1,\dots,\lambda_n)$ are the eigenvalues of $D^2 u$.  As for many nonlinear equations, we must restrict our search to admissible functions $u$ in order to ensure the ellipticity of~\eqref{model-sigma-k}. In this case, the admissibility condition requires that $\lambda(D^2u) \in \Gamma_k$, where the set $\Gamma_k \subset \mathbb{R}^n$ is defined by
\[
\Gamma_k = \{ \lambda \in \mathbb{R}^n : \sigma_1(\lambda)>0, \dots, \sigma_k(\lambda)> 0\}.
\]
It is a result of G\aa rding \cite{Garding} that $\Gamma_k$ is a convex cone. After early work by \cite{CNS1,ChengYau77,Ivo,Ivo83,Kr83}, this problem was solved for a unique admissible solution by Caffarelli-Nirenberg-Spruck \cite{CNS3} under the condition that the boundary $\partial \Omega$ is $(k-1)$-convex. The proof of Caffarelli-Nirenberg-Spruck \cite{CNS3} was subsequently simplified by Trudinger \cite{T}.
\smallskip
\par In \cite{Guan94,Guan98}, B. Guan replaced the condition of $(k-1)$-convexity of the boundary by the condition that the domain should admit a subsolution $\underline{u}$. This argument no longer relies on the shape of the boundary, and has found applications in geometric problems e.g. \cite{CY,GuanSpruck93,GuanPF,GuanZh,PS09}. We will adopt the subsolution approach rather than impose a condition on the boundary of our space. We note that the Dirichlet problem for an arbitrary domain may not admit an admissible solution without such a condition, as can be seen in the simplest case of $k=n$ with constant boundary data, which forces the domain to be convex.
\smallskip
\par The Dirichlet problem (\ref{model-sigma-k}) can also be studied when $\lambda$ is the vector of eigenvalues of the complex Hessian $\ddb u$. The Dirichlet problem for complex $k$-Hessian equations in a domain $\Omega \subset \mathbb{C}^n$ was solved by Vinacua \cite{Vin} and Li \cite{LiSY} building on earlier work of \cite{CKNS,ChengYau80,Guan98}. This paper concerns the global version of this result on complex manifolds.
\smallskip
\par In complex geometry, the problem takes the following form. Let $(X,\alpha)$ be a compact complex manifold with Hermitian metric $\alpha$. Let $[\chi] \in H^{1,1}_{BC}(X,\mathbb{R})$ be a given Bott-Chern cohomology class. Recall that
\[
H_{BC}^{1,1}(X,\mathbb{R}) = { \{ \alpha \in \Omega^{1,1}(X,\mathbb{R}) \} \over \{ \ddb f : f \in C^\infty(X,\mathbb{R}) \}} .
\]
Suppose $[\chi]$ admits a representative, denoted $\chi$, which is $k$-positive, meaning that the eigenvalues of $\chi$ with respect to $\alpha$ lie in the cone $\Gamma_k$. In this case, we say that $[\chi]$ is a $k$-positive class. For example, when $k=n$, an $n$-positive class is a K\"ahler class.  We now ask: given $\psi$ a positive smooth function, does there exist a representative $\chi' \in [\chi]$ with prescribed measure
\[
\chi'^k \wedge \alpha^{n-k} = \psi\alpha^n.
\]
This is a nonlinear equation for a potential function $u$, which can be written as
\begin{equation} \label{k-hessian}
(\chi+ \ddb u)^k \wedge \alpha^{n-k} = \psi \, \alpha^n.
\end{equation}
On a complex manifold with non-empty boundary $\partial X$, we must also prescribe boundary data
\[
u|_{\partial X} = \phi,
\]
where $\phi \in C^\infty(\partial X, \mathbb{R})$ is a given function.
\smallskip
\par When $k=n$, equation (\ref{k-hessian}) is the complex Monge-Amp\`ere equation, which was solved by S.-T. Yau \cite{Yau} on closed K\"ahler manifolds in the resolution of the Calabi conjecture. The analogous problem for the complex Monge-Amp\`ere equation on closed Hermitian manifolds was solved by Tosatti-Weinkove \cite{TW10a,TW10b}. On the other hand, the Dirichlet problem for the complex Monge-Amp\`ere equation on manifolds with boundary was studied by Cherrier-Hanani \cite{CH} on strongly pseudoconvex manifolds, and solved by Guan-Li \cite{GuanLi10} on manifolds admitting a subsolution. For an overview of the vast field of complex Monge-Amp\`ere equations, we refer the reader to the survey of Phong-Song-Sturm \cite{PSS}.
\smallskip
\par When $k=1$, the equation becomes a linear PDE whose solvability is well-known. For $1<k<n$, equations of this type were discovered by Fu-Yau in connection to the Hull-Strominger system of heterotic string compactifications \cite{FY}. See e.g. \cite{CHZ,FGV,PPZ2,PPZ3,PPZ4,PPZ5} for the study of $k$-Hessian equations in the context of Fu-Yau compactifications.
\smallskip
\par In \cite{Blocki05}, B\l ocki developed a pluripotential theory for complex $k$-Hessian equations in a domain in $\mathbb{C}^n$. Since then, the theory of weak solutions of complex Hessian equations has been extended to closed complex manifolds \cite{DK14}. There has been much recent work on weak solutions to complex $k$-Hessian equations, in particular aimed at develoing a potential theory parallel to the Bedford-Taylor \cite{BT} theory of complex Monge-Amp\`ere equations; see e.g. \cite{DK14,DC15,Lu13b,LN15,HL,KN,Ngu} and the references therein.
\smallskip
\par Concerning strong solutions of (\ref{k-hessian}), after initial progress by Hou \cite{Hou}, Kokarev \cite{Kok} and Jbilou \cite{Jbi}, the problem on K\"ahler manifolds without boundary was solved in 2012 by combining the Liouville theorem of Dinew-Ko\l odziej \cite{DK17} with the second order estimate of Hou-Ma-Wu \cite{HMW}. The corresponding problem on Hermitian manifolds was solved by D.K. Zhang \cite{ZhangDK} and Sz\'ekelyhidi \cite{Sz}. The second order estimate has since been refined and extended in various directions, see e.g. \cite{DPZ,DL,PhongTo,PPZ1}.
\smallskip
\par After the solution of Dinew-Ko\l odziej/Hou-Ma-Wu on closed K\"ahler manifolds, the remaining problem was to solve the Dirichlet problem for equation (\ref{k-hessian}) on complex manifolds with boundary. Gu and Nguyen \cite{GuNgu} were able to obtain smooth solutions on a small ball, and use these solutions together with a balayage argument to obtain continuous solutions when $\alpha$ is locally conformally K\"ahler. Feng-Ge-Zheng \cite{FGZ} considered a more general class of equations and reduced the problem to obtaining an a priori estimate on the gradient of the solution. However, obtaining an a priori gradient estimate for complex $k-$Hessian equations on manifolds via the maximum principle is a known open problem in the field. To our knowledge, the only result so far when $1<k<n$ is the work of X.-W. Zhang \cite{ZhangXW} where the gradient estimate is obtained under the assumption that $\chi + \ddb u >0$.
\smallskip
\par In this paper, we solve the Dirichlet problem for (\ref{k-hessian}). Rather than use the maximum principle for the gradient estimate, we use a blow-up argument and apply the Liouville theorem of Dinew-Ko\l odziej \cite{DK17}. For this to work, the second order estimate needs to scale correctly. Namely, we require
\[
\sup_X \| \ddb u \|_{(X,\alpha)} \leq C(1+ \sup_X \| \nabla u \|^2_{(X,\alpha)}),
\]
where the constant $C$ only depends on the background data and is independent of the solution $u$. Thus the main difficulty is shifted from obtaining a gradient estimate to obtaining a second order estimate with a particular scale. Our main contribution is to obtain this estimate on the boundary $\p X$, which allows us to solve the Dirichlet problem.  A similar argument, restricted to the setting of the complex Monge-Amp\`ere equation, appears in \cite{Bouck}.  Our main theorem is

\begin{thm} \label{thm-main}
Let $(X,\alpha)$ be a compact Hermitian manifold with boundary. Let $\chi \in \Gamma_k(X,\alpha)$ be a $(1,1)$ form, $\psi \in C^\infty(X)$ a smooth function satisfying $\psi \geq c >0$, and $\phi \in C^\infty(\p X, \mathbb{R})$. Suppose there exists a subsolution $\underline{u} \in C^\infty(\overline{X},\mathbb{R})$ satisfying
\[
\sigma_k(\underline{\lambda}) \geq \psi, \ \ \underline{u}|_{\p X} = \phi
\]
where $\underline{\lambda} \in \Gamma_k$ are the eigenvalues of $\chi + \ddb \underline{u}$ with respect to $\alpha$. Then there exists a unique $u \in C^\infty(\overline{X}, \mathbb{R})$ solving the equation
\[
\sigma_{k}(\lambda) = \psi, \ \ u|_{\p X} = \phi,
\]
with $\lambda \in \Gamma_k$, where $\lambda$ denotes the eigenvalues of $\chi + \ddb u$ with respect to $\alpha$. 
\end{thm}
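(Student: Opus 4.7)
My plan is to use the continuity method, connecting $\underline{u}$ at $t=0$ to a solution at $t=1$ via the family
\begin{equation*}
\sigma_k(\chi + \ddb u_t) = t\,\psi + (1-t)\,\sigma_k(\chi + \ddb \underline{u}), \qquad u_t|_{\p X} = \phi.
\end{equation*}
Openness of the set of admissible $t$ is standard, since on an admissible solution the linearization is a uniformly elliptic operator with smooth Dirichlet boundary data, and $u_0 = \underline{u}$ is already admissible. Uniqueness follows from the maximum principle applied to the concave operator $\sigma_k^{1/k}$ on $\Gamma_k$. Thus the content is entirely in the a priori estimates along the path.

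\textbf{$C^0$ and boundary $C^1$.} Comparing $u$ and $\underline{u}$ via concavity of $\sigma_k^{1/k}$ gives $\underline{u}\le u$, and an upper bound comes from a standard barrier using the $k$-positivity of $\chi$. On $\p X$ the boundary condition $u = \underline{u} = \phi$ immediately controls tangential derivatives, while the normal derivative is pinned between $\partial_\nu \underline{u}$ from below and a linear barrier built from a boundary defining function $\rho$ from above.

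\textbf{Boundary $C^2$ with the correct scale --- the main obstacle.} This is the essential new step and I expect it to be the hard part. Following the Caffarelli-Nirenberg-Spruck template adapted to the complex setting, at each $p\in\p X$ I split $\ddb u$ into tangent-tangent, tangent-normal, and normal-normal pieces. The tangent-tangent piece is controlled by $\phi$ through the boundary condition. The tangent-normal piece is handled by constructing a barrier of the form
\begin{equation*}
v = A(\underline{u}-u) + B\rho - N\rho^2
\end{equation*}
and applying it to a linearized operator acting on suitable first-order tangential combinations of $u$, using the strict subsolution property of $\underline{u}$ in directions transverse to $\p X$. The normal-normal piece is then extracted algebraically from the equation itself once a lower bound on a certain principal minor of $\chi + \ddb u$ is produced, again using $\underline{u}$. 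The technical crux is to arrange every barrier constant, every cutoff, and every Cauchy-Schwarz so that the unavoidable first-order error terms are controlled purely by $1+\sup_X\|\nabla u\|^2$, with no higher powers of $|\nabla u|$ allowed; this is precisely what yields
\begin{equation*}
\sup_{\p X}\|\ddb u\| \le C\bigl(1+\sup_X\|\nabla u\|^2\bigr),
\end{equation*}
the scale demanded by the blow-up argument.

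\textbf{Global $C^2$, gradient by blow-up, closure.} A Hou-Ma-Wu/Sz\'ekelyhidi maximum-principle argument, executed with explicit tracking of $|\nabla u|$, extends the boundary bound to $\sup_X\|\ddb u\|\le C(1+\sup_X\|\nabla u\|^2)$. To obtain $C^1$, suppose for contradiction that along the continuity path $\sup_X|\nabla u|$ is unbounded. Rescaling at a sequence of gradient maxima $p_j$ by the factor $M_j = \sup_X|\nabla u|_{(X,\alpha)}$, the $C^2$ scaling above ensures the rescaled functions have a uniform $C^{1,1}$ bound and $\sigma_k$ tending to zero; a subsequence converges locally to a non-constant $\Gamma_k$-admissible $\sigma_k$-maximal function on $\C^n$ with bounded gradient, contradicting the Dinew-Ko\l odziej Liouville theorem. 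Hence $\sup_X|\nabla u|$, and therefore $\sup_X\|\ddb u\|$, is uniformly controlled along the path. Evans-Krylov (applicable by concavity of $\sigma_k^{1/k}$) then gives uniform $C^{2,\alpha}$, Schauder bootstraps to $C^\infty$, and closedness holds, completing the continuity method.
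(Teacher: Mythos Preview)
Your strategy is exactly the paper's: the same continuity path, the same reduction to a scaled boundary $C^2$ estimate, Hou--Ma--Wu/Sz\'ekelyhidi in the interior, and the Dinew--Ko\l odziej Liouville theorem via blow-up to close the gradient bound. Two places in your sketch understate what is actually needed, though, and as written they would not close.

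First, for the mixed normal--tangential estimate, the barrier $A(\underline{u}-u)+B\rho-N\rho^2$ alone does not deliver the scale $|h_{\bar n i}|\le CK^{1/2}$. When you apply the linearized operator to $T_\alpha(u-\underline{u})$, the cross term coming from differentiating the coefficient $\rho_{t^\alpha}/\rho_{x^n}$ produces an uncontrolled quadratic expression $\sigma_k^{p\bar q}\partial_p\partial_{y^n}(u-\underline{u})\,\partial_{\bar q}\partial_{y^n}(u-\underline{u})$ which cannot be absorbed by $\mathcal F$ or by $\sum_i\sigma_{k-1}(\lambda|i)|\lambda_i|$. The paper handles this by adding to the barrier the terms $-K^{-1/2}\sum_i(\partial_{y^i}(u-\underline{u}))^2$ and $-K^{-1/2}\sum_{a=1}^{n-1}|\nabla_a(u-\underline{u})|^2$ (the latter built from a tangential unitary frame), whose linearization manufactures a good term $K^{-1/2}\sum_{i\neq r}\sigma_{k-1}(\lambda|i)\lambda_i^2$; this, together with the inequality $\sum_i\sigma_{k-1}(\lambda|i)|\lambda_i|\le \epsilon\sum_{i\neq r}\sigma_{k-1}(\lambda|i)\lambda_i^2 + C\epsilon^{-1}\mathcal F + C$, is what forces the $K^{1/2}$ scale. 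Your sentence about ``arranging every Cauchy--Schwarz'' is pointing at the right issue but does not supply the mechanism.

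Second, in the blow-up you must treat separately the case where the gradient maxima $p_j$ accumulate on $\partial X$. If $M_j\,\mathrm{dist}(p_j,\partial X)\to\infty$ the interior argument goes through, but if this product stays bounded the rescaled domains converge to a half-space and the Liouville theorem does not immediately apply. The paper disposes of this case by rescaling the sandwich $\underline{u}\le u\le b$ (with $b$ the solution of $\alpha^{j\bar k}(\chi_{\bar kj}+b_{\bar kj})=0$, $b|_{\partial X}=\phi$): both $\hat{\underline{u}}_j$ and $\hat b_j$ converge to the constant $\phi(p_\infty)$, forcing $u_\infty$ to be constant and contradicting $|\nabla u_\infty|=1$ at the rescaled limit point. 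Your sketch omits this boundary case entirely.
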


The paper is organized as follows. In \S 2, we establish notation and use the continuity method to reduce Theorem~\ref{thm-main} to obtaining a priori estimates on the solution. In \S 3, we recall various estimates which are known in the literature and will be used in the proof. In \S 4-5, we prove the boundary $C^2$ estimate. The most intricate part of the argument is the double normal estimate in \S 5, and here we build on the technique of Caffarelli-Nirenberg-Spruck \cite{CNS3}. Finally, in \S 6 we combine the Liouville theorem of Dinew-Ko\l odziej \cite{DK17} with a blow-up argument to complete the proof.
\bigskip
\par {\bf Acknowledgements:} The authors are grateful to D. H. Phong and X. Zhang for helpful comments and suggestions.

\section{Setup}

\subsection{Notation}
Let $(X,\alpha)$ be a compact complex manifold with Hermitian metric $\alpha$ and non-empty boundary $\p X$. In local coordinates, we write
\[
\alpha = \sqrt{-1} \alpha_{\bar{k} j} dz^j \wedge d \bar{z}^k
\]
and $\alpha^{j \bar{k}}$ for the inverse of $\alpha_{\bar{k} j}$, so that $\alpha^{i \bar{k}} \alpha_{\bar{k} j} = \delta^i{}_j$. Covariant derivatives $\nabla$ will be with respect to the Chern connection of $\alpha$, which acts on sections $W \in \Omega^{1,0}(X)$ by
\be \label{covariant-derivatives}
\nabla_i W_k = \partial_i W_k - \Gamma^r{}_{ik} W_r, \ \ \nabla_{\bar{i}} W_k = \partial_{\bar{i}} W_k,
\ee
with $\Gamma^r{}_{ik} = g^{r \bar{\ell}} \partial_i g_{\bar{\ell} k}$. 
\smallskip
\par Let $\chi \in \Omega^{1,1}(X,\mathbb{R})$ be a differential form of type $(1,1)$, written in local coordinates as $\chi = \sqrt{-1} \chi_{\bar{k} j} dz^j \wedge d \bar{z}^k$. We say
\[
\chi \in \Gamma_k(X,\alpha)
\]
if the vector of eigenvalues of the hermitian endomorphism $\alpha^{i \bar{k}} \chi_{\bar{k} j}$ lies in the $\Gamma_k$ cone at each point.
\smallskip
\par Let $\chi \in \Gamma_k(X,\alpha)$, let $\psi\geq c >0$ be a smooth positive function on $X$, and let $\phi \in C^\infty(\p X, \mathbb{R})$. We seek a potential function $u \in C^\infty(X,\mathbb{R})$ solving the equation
\[
\sigma_k(\lambda) = \psi, \ \ u|_{\p X} = \phi,
\]
where $\lambda = (\lambda_1, \dots, \lambda_n) \in \Gamma_k$ are the eigenvalues of the endomorphism
\[
h^i{}_j = \alpha^{i \bar{k}}(\chi_{\bar{k} j} + u_{\bar{k} j}).
\]
When forming a vector out of eigenvalues of an endomorphism, we will use the ordering
\[
\lambda_n \leq \lambda_{n-1} \leq \cdots \leq \lambda_1.
\]
A subsolution $\underline{u}$ to our Dirichlet problem is a smooth function satisfying
\[
\sigma_k(\underline{\lambda}) \geq \psi, \ \ \underline{u}|_{\p X} = \phi,
\]
where $\underline{\lambda} \in \Gamma_k$ are the eigenvalues of $\alpha^{i \bar{k}} (\chi_{\bar{k} j} + \underline{u}_{\bar{k} j})$.
\smallskip
\par We define the tensor
\[
\sigma_k^{p \bar{q}} =  {\partial \sigma_k \over \partial h^r{}_p} \alpha^{r \bar{q}},
\]
and also use the notation
\[
\F = \sigma_k^{p \bar{q}} \alpha_{\bar{q} p}.
\]
Here the notation ${\partial \sigma_k \over \partial h^r{}_p}$ means the derivative of $\sigma_k$ regarded as a function on hermitian matrices.  At a diagonal matrix $h^i{}_j$, we have the formula \cite{Baller}
\be \label{sigma-k-ij}
{\partial \sigma_k \over \partial h^i{}_j} = \delta_{ij} {\partial \sigma_k \over \partial \lambda_i}.
\ee
Therefore, at a point $p \in X$ where $\alpha_{\bar{k} j} = \delta_{kj}$ and $h^i{}_j = \lambda_j \delta^i{}_j$, then
\[
\sigma_k^{p \bar{q}} = \sigma_{k-1}(\lambda|p) \delta_{pq}, \ \ \ \F = \sum_p \sigma_{k-1}(\lambda|p).
\]
Here we use the notation $(\lambda|i) \in \mathbb{R}^n$ for the vector where the $i$-th component of $\lambda$ has been replaced by 0, which allows us to write
\[
{\partial \sigma_k \over \partial \lambda_i} = \sigma_{k-1}(\lambda|i).
\]
It will be convenient to denote
\[
K = 1 + \| \nabla u \|^2_{L^\infty(X,\alpha)}.
\]
Finally, we note that we will use the usual convention where $C$ denotes a constant which may change line by line, but is only allowed to depend on $(X,\alpha)$, $\chi$, $\psi$, $\phi$, $\underline{u}$.

\subsection{Continuity Method}
Our goal is to prove the following a priori estimate. This will allow us to use the continuity method to solve the Dirichlet problem.

\begin{thm} \label{thm-c2-uniform}
Let $(X,\alpha)$ be a compact Hermitian manifold with boundary. Let $\chi \in \Gamma_k(X,\alpha)$ be a $(1,1)$ form, $\psi \in C^\infty(X)$ a smooth function satisfying $\psi \geq c >0$, and $\phi \in C^\infty(\p X, \mathbb{R})$. Suppose $u \in C^4(\overline{X}, \mathbb{R})$ solves the equation
\[
\sigma_{k}(\lambda) = \psi, \ \ u|_{\p X} = \phi,
\]
where $\lambda \in \Gamma_k$ are the eigenvalues of $\chi + \ddb u$ with respect to $\alpha$. Suppose there exists a subsolution $\underline{u} \in C^\infty(\overline{X},\mathbb{R})$ satisfying
\[
\sigma_k(\underline{\lambda}) \geq \psi, \ \ \underline{u}|_{\p X} = \phi
\]
where $\underline{\lambda} \in \Gamma_k$ are the eigenvalues of $\chi + \ddb \underline{u}$ with respect to $\alpha$. Then
\[
\| u \|_{L^\infty(X)} + \| \nabla u \|_{L^\infty(X,\alpha)} + \| \ddb u \|_{L^\infty(X,\alpha)} \leq C,
\]
where $C$ depends on $(X,\alpha)$, $\chi$, $\underline{u}$, and $\| \psi \|_{C^2}$ and $\inf_{X}\psi$.
\end{thm}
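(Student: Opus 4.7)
The plan is to establish the three bounds in the order $C^0 \to C^2 \to C^1$, following the program sketched after Theorem~\ref{thm-main}. The crucial feature is that the second order bound must be proved with the \emph{gradient-square scaling}
\[
\sup_X |\ddb u|_\alpha \;\leq\; C K, \qquad K := 1 + \|\nabla u\|^2_{L^\infty(X,\alpha)},
\]
because this exact scaling is what allows a rescaled blow-up limit to remain $C^{1,1}$-controlled while the right-hand side degenerates, opening the door to the Dinew--Ko\l odziej Liouville theorem.

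For the $C^0$ estimate, $k$-positivity gives $\Delta_\alpha u \geq -\tr{\alpha}{\chi}$, and the weak maximum principle yields an upper bound in terms of $\sup_{\p X}\phi$ and background data. For the lower bound I would exploit the subsolution: since $u = \uu$ on $\p X$, concavity of $\sigma_k^{1/k}$ on $\Gamma_k$ together with $\sigma_k(\uu) \geq \psi = \sigma_k(u)$ applied at any interior minimum of $u - \uu$ forces $u \geq \uu$, hence $\|u\|_\infty \leq C$.

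For the scaled second order estimate, I would first reduce the global bound to a boundary bound by a Hou--Ma--Wu type test function in the Hermitian $\sigma_k$ setting, of the schematic form $\log \lambda_1(\chi+\ddb u) + \eta\bigl(|\nabla u|^2_\alpha / K\bigr) + A(\uu - u)$; the subsolution contribution provides a uniform negative term of order $-\ve \F$ (as refined by Sz\'ekelyhidi and others) which absorbs the positive third-order terms and yields $\lambda_1 \leq C\bigl(K + \sup_{\p X}\lambda_1\bigr)$. On the boundary, fix $p_0 \in \p X$, take a defining function $\rho$ and a frame in which $\p/\p x_n$ is normal. Tangential-tangential components come for free from differentiating $u|_{\p X} = \phi$ twice along $\p X$, giving $|u_{t\bar t}(p_0)| \leq C(1 + |\nabla u|) \leq C K^{1/2}$. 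Tangential-normal components are controlled by a Guan-type barrier $v = A(\uu - u) - B\rho + N\rho^2$: applying $L = \sigma_k^{p\bar q}\nabla_p\nabla_{\bar q}$ and using the subsolution produces $Lv \leq -\ve(\F + 1)$ on a collar of $\p X$, and careful tracking of the scaling forces the induced boundary error to be $O(K)$ rather than $O(K^{2-\ve})$. The remaining double-normal component is the main obstacle of the paper: adapting the Caffarelli--Nirenberg--Spruck argument to the Hermitian $\sigma_k$ setting, one diagonalizes the tangential part of the Hessian at $p_0$ and leverages a quantitative bound on $\sigma_{k-1}$ restricted to the tangential eigenvalues (obtained by perturbing the tangential data into $\Gamma_k$ via the subsolution) to bound $|u_{n\bar n}(p_0)| \leq C K$.

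With the scaled $C^2$ bound in hand, the $C^1$ estimate is obtained by blow-up and contradiction. Suppose no such bound; produce a sequence of solutions (or extract from a single solution) with $M_j := \|\nabla u_j\|_{L^\infty(X,\alpha)} \to \infty$, attained at points $p_j$. A standard barrier argument built from $\uu$ (and independent of the $C^2$ estimate) bounds $|\nabla u_j|$ on $\p X$ uniformly, so for large $j$ the maximum lies in the interior. Rescale by
\[
\ti{u}_j(z) \;=\; u_j\bigl(\exp_{p_j}(M_j^{-1} z)\bigr) - u_j(p_j),
\]
defined on a ball of radius $M_j \cdot \mathrm{dist}(p_j, \p X) \to \infty$. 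Then $\|\nabla \ti{u}_j\|_\infty \leq 1$ with equality at the origin, and the scaled second order estimate gives $|\ddb \ti{u}_j|_\alpha \leq M_j^{-2} \cdot C K \leq C$ uniformly on compact sets. Passing to a $C^{1,\gamma}_{\mathrm{loc}}$ limit yields a $C^{1,1}$ function $\ti{u}_\infty$ on $\mathbb{C}^n$, $k$-subharmonic for the flat metric, satisfying $\sigma_k(\ddb \ti{u}_\infty) = 0$ since the right-hand side scales as $M_j^{-2k}\psi \to 0$. The Dinew--Ko\l odziej Liouville theorem~\cite{DK17} then forces $\ti{u}_\infty$ to be constant, contradicting $|\nabla \ti{u}_\infty(0)| = 1$. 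As remarked, the main technical obstacle is the double-normal boundary estimate, where the gradient-square scaling must be preserved through a delicate perturbation of the Caffarelli--Nirenberg--Spruck machinery.
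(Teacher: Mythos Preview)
Your overall architecture matches the paper's, but there is a genuine gap in the blow-up step. You assert that the rescaled function $\ti u_j$ is ``defined on a ball of radius $M_j\cdot\mathrm{dist}(p_j,\p X)\to\infty$''. The boundary gradient bound does force $p_j$ into the interior, but it does \emph{not} force $M_j\,\mathrm{dist}(p_j,\p X)\to\infty$: the points $p_j$ may drift to a boundary point $p_\infty\in\p X$ at a rate so that $M_j r_j\to L<\infty$. In that regime your rescaled domains do not exhaust $\mathbb{C}^n$; the limit $\ti u_\infty$ lives only on a half-space $\{\mathrm{Im}\,z_n>0\}$, and the Dinew--Ko\l odziej Liouville theorem for $\mathbb{C}^n$ is not directly applicable. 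The paper treats this case separately (its Case~2b): after recentering at the nearest boundary point one obtains a limit on the closed half-space, and then the sandwich $\uu\le u\le b$ (where $b$ solves $\alpha^{n-1}\wedge(\chi+\ddb b)=0$ with $b|_{\p X}=\phi$) passes to the limit and collapses $u_\infty$ to the constant $\phi(p_\infty)$, contradicting $|\nabla u_\infty|=1$ at the shifted origin. Without this half-space argument your contradiction is incomplete.

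A second, smaller point: your sketch of the mixed normal--tangential estimate (``Guan-type barrier $v=A(\uu-u)-B\rho+N\rho^2$ with careful tracking of the scaling'') understates what is needed to obtain the crucial $|h_{\bar n i}|\le CK^{1/2}$ (not merely $CK$). The paper's barrier is not just $v$ but
\[
\Psi=AK^{1/2}v+BK^{1/2}|z|^2-\frac{1}{K^{1/2}}\sum_{i=1}^n(\p_{y^i}(u-\uu))^2-\frac{1}{K^{1/2}}\sum_{a=1}^{n-1}|\nabla_a(u-\uu)|^2,
\]
where the $K^{-1/2}$-weighted quadratic gradient terms (the last one built with a unitary frame tangent to the level sets of $\rho$) are what generate a positive $\frac{1}{K^{1/2}}\sum_{i\ne r}\sigma_{k-1}(\lambda|i)\lambda_i^2$ term capable of absorbing the bad $\sum_i\sigma_{k-1}(\lambda|i)|\lambda_i|$ error via~\eqref{quad-split}. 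A plain Guan barrier does not produce this term and will not deliver the $K^{1/2}$ scale you need for the double-normal step.
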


We now give the standard argument which shows that this a priori estimates implies the main theorem (Theorem \ref{thm-main}).
\smallskip
\par For a parameter $t \in [0,1]$, we consider the family
\be \label{cont-meth}
\sigma_k(\lambda_t) = \psi_{t} := t \psi + (1-t) \sigma_k(\underline{\lambda}),  \ \ u_t|_{\p X} = \phi,
\ee
where $\lambda_t \in \Gamma_k$ are the eigenvalues of $\alpha^{j \bar{k}} (\chi_{\bar{k} j} + (u_t)_{\bar{k} j})$. Let $\alpha \in (0,1)$ to be determined later, and define 
\[
S = \{ t \in [0,1] : {\rm there} \ {\rm exists} \ u_t \in C^{4,\alpha}(\overline{X},\mathbb{R}), \ {\rm with} \ \lambda_t \in \Gamma_k, \ {\rm solving} \ (\ref{cont-meth}) \}.
\]
The function $u_0= \underline{u}$ solves the equation at $t=0$, hence $S$ is non-empty. The linearization of the operator $u \mapsto \sigma_k(\lambda)$ at $\lambda \in \Gamma_k$ is the complex Laplacian using the Hermitian metric whose inverse is $\sigma_k^{p \bar{q}}(\lambda)$. Since this operator is invertible, we have that $S$ is an open set by the implicit function theorem.
\smallskip
\par To show $S$ is closed, we use the a priori estimates. Let $t_i \in S$ be a sequence converging to some $t_\infty \in [0,1]$. We have $\psi_t \geq \psi = \psi_1 \geq a >0$ and so
\[
\sigma_k(\underline{\lambda}) \geq t \psi + (1-t) \sigma_k(\underline{\lambda}) \geq a >0,
\]
hence $\underline{u}$ is a subsolution along the continuity path. By Theorem \ref{thm-c2-uniform}, we have that
\[
\| u_{t_i} \|_{L^\infty(X)} + \| \nabla u_{t_i} \|_{L^\infty(X,\alpha)} + \| \ddb u_{t_i} \|_{L^\infty(X,\alpha)} \leq C
\]
uniformly along the path.  This estimate implies that we have uniform ellipticity
\[
C \geq \sigma_{k-1}(\lambda_t|i) \geq C^{-1},
\]
of the linearized $\sigma_k$ operator along the path. The uniform ellipticity can be seen by the inequality (\ref{sigma11-lower-bdd}), which will be discussed later.
\smallskip
\par By the envelope trick of Y. Wang \cite{WangYu}, as generalized in \cite{TWWY} (see also \cite{CJY}), we can extend $\sigma_k^{\frac{1}{k}}$ to a concave operator on the set of real symmetric matrices, which remains uniformly elliptic along the continuity path. To obtain $C^{2,\alpha}$ estimates, we can now invoke the Evans-Krylov theorem \cite{E, Kr1, Kr} in the interior of $X$, and the Krylov theorem \cite{Kr} near the boundary $\p X$ (see \cite[Theorem 7.1]{ChenWu}, or alternatively \cite{SS}).  Therefore
\[
\| u_{t_i} \|_{C^{2,\alpha}(X)} \leq C.
\]
Differentiating the equation now gives a uniformly elliptic PDE with uniformly H\"older coefficients. Applying the Schauder estimates gives
\[
\| u_{t_i} \|_{C^{4,\alpha}(X)} \leq C.
\]
We can now take convergent subsequence to a limiting function $u_\infty \in C^{4,\alpha}$ which solves the equation at $t_\infty$. Since $S$ is nonempty, open and closed, $S=[0,1]$.
\bigskip
\par This gives the existence of a $C^{4,\alpha}$ solution to the Dirichlet problem. By differentiating the equation and invoking Schauder theory, we see that this solution is in fact smooth. Uniqueness follows from the maximum principle. This proves Theorem \ref{thm-main} given Theorem \ref{thm-c2-uniform}.

\section{Preliminary Estimates}
In this section, we suppose $u$ satisfies $\sigma_k(\lambda)=\psi \geq c>0$ with $\lambda \in \Gamma_k$ and $u|_{\p X} = \phi$, and recall several estimates which are well-known in the literature. We start with the maximum principle.

\begin{lem} \label{lem-max-princ} \cite{CNS3} Let $X$ be a compact complex manifold with boundary. Suppose $v : X \rightarrow \mathbb{R}$ is a smooth function such that the vector of eigenvalues of $\alpha^{-1}(\chi+ \ddb v)$ lies outside the set $\{ \lambda \in \Gamma_k : \sigma_k(\lambda) \geq \psi \}$ at all points $x \in X$, and $u \leq v$ on $\p X$. Then
\[ 
u \leq v
\]
on $X$.
\end{lem}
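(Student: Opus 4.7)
The plan is to argue by contradiction using the standard maximum principle applied to $w := u - v$. Since $u \leq v$ on $\partial X$ and $X$ is compact, if the conclusion fails then $\sup_X w > 0$ is attained at an interior point $x_0 \in X \setminus \partial X$. At such a point the real Hessian of $w$ is non-positive, and in particular the $(1,1)$-form $\ddb w$ is non-positive as a Hermitian form, so
\[
\chi + \ddb u \leq \chi + \ddb v \quad \text{at } x_0.
\]
Writing $H_u = \alpha^{-1}(\chi + \ddb u)$ and $H_v = \alpha^{-1}(\chi + \ddb v)$ (viewed as Hermitian endomorphisms relative to the metric $\alpha$), this gives $H_v - H_u \geq 0$ at $x_0$.

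Next I would use the fundamental monotonicity property of the G{\aa}rding cones: if $A$ is a Hermitian endomorphism with $\lambda(A) \in \Gamma_k$ and $P$ is positive semi-definite, then $\lambda(A+P) \in \Gamma_k$ and $\sigma_\ell(\lambda(A+P)) \geq \sigma_\ell(\lambda(A))$ for $1 \leq \ell \leq k$. This is standard and follows by noting that along the segment $H_t := H_u + t(H_v - H_u)$, $t \in [0,1]$, the derivative $\frac{d}{dt}\sigma_\ell(\lambda(H_t)) = \sigma_\ell^{p\bar q}(H_t)(H_v - H_u)_{\bar q p}$ is non-negative whenever $\lambda(H_t) \in \overline{\Gamma_\ell}$, because $\sigma_\ell^{p \bar q}$ is positive semi-definite on $\overline{\Gamma_\ell}$. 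A continuity argument then prevents the path from leaving $\Gamma_k$, and shows $\sigma_k$ is non-decreasing along the segment.

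Applying this with $A = H_u$ and $P = H_v - H_u$ at $x_0$, and using the equation $\sigma_k(\lambda(H_u))(x_0) = \psi(x_0)$, I conclude that
\[
\lambda(H_v)(x_0) \in \Gamma_k \quad \text{and} \quad \sigma_k(\lambda(H_v))(x_0) \geq \psi(x_0),
\]
i.e.\ $\lambda(H_v)(x_0)$ lies in the set $\{\lambda \in \Gamma_k : \sigma_k(\lambda) \geq \psi\}$. This directly contradicts the hypothesis that $\lambda(H_v)$ lies outside this set at every point of $X$. Hence no positive interior maximum of $w$ exists, and combined with $w|_{\partial X} \leq 0$ we obtain $u \leq v$ on $X$.

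The only step requiring real care is the monotonicity assertion along the segment $H_t$, since neither the $\Gamma_k$ condition nor $\sigma_k$-positivity are automatically preserved by arbitrary Hermitian perturbations; the argument genuinely uses both that $H_v - H_u$ is positive semi-definite and G{\aa}rding's positivity of $\sigma_\ell^{p\bar q}$ on $\Gamma_\ell$. Everything else is a routine comparison at an interior maximum.
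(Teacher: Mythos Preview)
Your proof is correct and follows the same contradiction-at-an-interior-maximum strategy as the paper. The only difference is in how you justify the monotonicity step: the paper invokes the Weyl inequality to obtain $\lambda_i \leq \mu_i$ componentwise (after ordering) and then uses coordinate-wise monotonicity of the set $\{\lambda \in \Gamma_k : \sigma_k(\lambda) \geq \psi\}$, whereas you argue directly at the matrix level via the segment $H_t$ and positivity of $\sigma_\ell^{p\bar q}$ on $\Gamma_\ell$. Both routes encode the same G{\aa}rding-cone monotonicity, so the arguments are essentially equivalent; the Weyl-inequality version is a touch shorter, while your segment argument is more self-contained.
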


\par \noindent {\it Proof:} Suppose $u>v$ at some point in $X$, so that $u-v$ attains a maximum at a point $p$ in the interior of $X$. Then $D^2 u (p) \leq D^2 v(p)$. Choose normal coordinates for $\alpha$, and let $\lambda(p) = (\lambda_1,\dots,\lambda_n)$ be the eigenvalues of $\alpha^{-1}(\chi+ \ddb u)$, arranged in decreasing, and let $\mu(p) = (\mu_1,\dots,\mu_n)$ be the eigenvalues of $\alpha^{-1}(\chi+ \ddb v)$, arranged in decreasing order. By the Weyl inequality 
\[
\lambda_i \leq \mu_i, \ \ i \in \{1, \dots, n\}.
\]
Since $\lambda(p)$ is in $\{ \lambda \in \Gamma_k : \sigma_k(\lambda) \geq \psi \}$, so is $\mu(p)$. This is a contradiction. $\qed$

\bigskip
\par Let $\underline{u}$ be a subsolution, which satisfies  $\sigma_k(\underline{\lambda}) \geq \psi$ and $\underline{u}|_{\p X} = \phi$. Let $b(z)$ be a function satisfying
\[
\alpha^{j \bar{k}} (\chi_{\bar{k} j} + \p_j \p_{\bar{k}} b) = 0, \ \ \ b|_{\p X} = \phi.
\]
Such a function can be constructed by solving the linear equation with homogeneous boundary condition
\[
\alpha^{j \bar{k}} \p_j \p_{\bar{k}} \tilde{b} = - \alpha^{j \bar{k}} (\chi_{\bar{k} j} + \underline{u}_{\bar{k} j}), \ \ \ \tilde{b}|_{\partial X} = 0
\]
and then letting $b = \tilde{b} + \underline{u}$.
\smallskip
\par Note that $\sigma_1(\lambda)>0$ since $\lambda \in \Gamma_k$. By the comparison principle,
\be \label{barriers}
\underline{u} \leq u \leq b,
\ee
and $\underline{u}=u=b$ on $\p X$.  We obtain
\begin{lem}\label{lem:c0-c1-bdry}
In the setting of Theorem~\ref{thm-c2-uniform} we have the following estimates
\be
\sup_X \| u \|_{L^\infty} \leq C,
\ee
\be \label{bdd-grad}
\sup_{\p X} \| \nabla u \|_{(\p X, \alpha|_{\p X})} \leq C.
\ee
where $C$ depends on $(X,\alpha)$, $\chi$, $\underline{u}$.
\end{lem}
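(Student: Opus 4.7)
The plan is to read off both estimates directly from the barrier inequality $\underline{u}\le u\le b$ in (\ref{barriers}), using that all three functions agree on $\p X$. Here $\underline{u}$ is part of the given data and $b$ is produced by the uniformly elliptic linear Dirichlet problem set up just before (\ref{barriers}); consequently $b$ admits a priori $C^k$ bounds of every order depending only on $(X,\alpha)$, $\chi$, $\phi$, and $\underline{u}$.

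For the $L^\infty$ estimate the argument is immediate: $\underline{u}$ and $b$ are fixed smooth functions on the compact manifold $\overline{X}$, hence uniformly bounded, and the sandwich $\underline{u}\le u\le b$ at once gives $\sup_X|u|\le C$.

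For the boundary gradient estimate I would split $\nabla u$ along $\p X$ into tangential and normal parts with respect to $\alpha$. The tangential part is controlled directly by the boundary data: since $u|_{\p X}=\phi$, any vector field $T$ tangent to $\p X$ satisfies $T(u)=T(\phi)$ on $\p X$, and thus its size is controlled by $\|\phi\|_{C^1(\p X)}$. For the normal part, let $\nu$ denote the inward unit normal to $\p X$. Since $u-\underline{u}\ge 0$ in $X$ with vanishing boundary values, its inward normal derivative at any boundary point is $\ge 0$, so $\p_\nu u\ge \p_\nu \underline{u}$ on $\p X$. Applying the same reasoning to $b-u\ge 0$ yields $\p_\nu u\le \p_\nu b$. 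Thus $|\p_\nu u|\le \max(|\p_\nu\underline{u}|,|\p_\nu b|)\le C$, completing the estimate.

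There is no genuine obstacle here—all of the real content has already been absorbed into the construction of the barriers $\underline{u}$ and $b$ and the comparison principle of Lemma~\ref{lem-max-princ}. The only point requiring care is that $u$, $\underline{u}$, $b$ are at least $C^1$ up to the boundary, which is what lets one pass from the pointwise inequality in $X$ to the one-sided normal-derivative inequality on $\p X$.
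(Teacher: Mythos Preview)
Your proposal is correct and follows essentially the same approach as the paper: both use the barrier sandwich $\underline{u}\le u\le b$ from (\ref{barriers}) to get the $L^\infty$ bound immediately, and then control the boundary gradient by noting tangential derivatives agree with those of the boundary data while the normal derivative is pinched between $\p_\nu\underline{u}$ and $\p_\nu b$ via the one-sided inequalities $u-\underline{u}\ge 0$ and $b-u\ge 0$ vanishing on $\p X$.
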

Indeed, to see the boundary gradient estimate, fix a point $p \in \partial X$, and let $t^\alpha$ denote coordinates tangential to $\del X$ at $p$, and $x^n$ a coordinate parallel the inner normal direction. We have $\partial_{t^\alpha} (u-\underline{u}) = 0$, $\partial_{x^n} (u-\underline{u}) \geq 0$, and $\partial_{x^n} (b-u) \geq 0$ at the point $p$, which implies the boundary gradient estimate.
\smallskip
\par It was first proved by Hou-Ma-Wu \cite{HMW} that complex Hessian equations on closed complex manifolds satisfy an a priori estimate on the complex Hessian $ \ddb u$. This was generalized to the Hermitian case by \cite{Sz,ZhangDK}. Sz\'ekelyhidi \cite{Sz} further  generalized this estimate to a wide class of nonlinear equations on closed Hermitian manifolds. Using the notation of \cite{Sz}, consider the test function
\[
G = \log \lambda_1 + \phi(|\nabla u|^2) + \psi(u),
\]
with
\[
\phi(t) = - {1 \over 2} \log (1 - t/2K), \ \ \psi(t) = -2At + {A \tau \over 2} t^2,
\]
for constants $A,\tau$ depending on $(X,\alpha)$, $\chi$, $\psi$. If $G$ attains a maximum in the interior of $X$, we may apply the maximum principle argument of (Proposition 13, \cite{Sz}) and obtain the estimate
\[
\sup_X \| \ddb u \|_{(X,\alpha)} \leq CK,
\]
where $C$ depends on $(X,\alpha), \underline{u}, \phi$, $|\psi^{\frac{1}{k}}|_{C^{2}(X)}$, and $\chi$. Next, we consider the case when $G$ attains a maximum on the boundary of $X$.
\[
\sup_X \left[\log \lambda_1 + \phi(|\nabla u|^2) + \psi(u) \right] \leq \sup_{\p X} \left[ \log \lambda_1 + \phi(|\nabla u|^2) + \psi(u) \right].
\]
In that case, $\| \ddb u \|_{(X,\alpha)}$ can be estimated by its supremum on the boundary. We can thus conclude from the Hou-Ma-Wu maximum principle

\begin{prop} \label{prop-c2-est-HMW}
Let $(X,\alpha)$ be a compact Hermitian manifold with boundary. Let $\chi \in \Gamma_k(X,\alpha)$ be a $(1,1)$ form, $\psi \in C^\infty(X)$ a smooth function satisfying $\psi >0$, and $\phi \in C^\infty(\p X, \mathbb{R})$. Suppose $u \in C^4(\overline{X}, \mathbb{R})$ solves the equation
\[
\sigma_{k}(\lambda) = \psi, \ \ u|_{\p X} = \phi,
\]
where $\lambda \in \Gamma_k$ are the eigenvalues of $\chi + \ddb u$ with respect to $\alpha$. Suppose there exists a subsolution $\underline{u} \in C^\infty(\overline{X},\mathbb{R})$ satisfying
\[
\sigma_k(\underline{\lambda}) \geq \psi, \ \ \underline{u}|_{\p X} = \phi
\]
where $\underline{\lambda} \in \Gamma_k$ are the eigenvalues of $\chi + \ddb \underline{u}$ with respect to $\alpha$. Then
\be\label{C2-interior}
 \| \ddb u \|_{L^\infty(X,\alpha)} \leq C(1+ \sup_{\del X} |\ddb u| + \sup_{X}|\nabla u|^2),
\ee
where $C$ depends on $(X,\alpha)$, $\underline{u}, \phi$,  $|\psi^{\frac{1}{k}}|_{C^{2}(X)}$, and $\chi$.
\end{prop}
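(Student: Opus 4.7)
The proof is essentially a direct application of the Hou-Ma-Wu maximum principle in the form developed by Sz\'ekelyhidi for general concave Hessian-type equations, modified only to account for the presence of a boundary. The plan is to run the argument on the test function
\[
G = \log \lambda_1 + \phi(|\nabla u|^2) + \psi(u), \quad \phi(t) = -\tfrac{1}{2}\log\!\left(1 - \tfrac{t}{2K}\right), \quad \psi(t) = -2At + \tfrac{A\tau}{2}t^2,
\]
where $\lambda_1$ is the largest eigenvalue of $h^i{}_j$, the constants $A,\tau > 0$ are chosen large depending on $(X,\alpha), \chi, \underline u, \psi$, and $K = 1 + \|\nabla u\|^2_{L^\infty(X,\alpha)}$. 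The key design features are that on the range $|\nabla u|^2 \in [0,K]$, we have $\phi'\in[\frac{1}{4K},\frac{1}{2K}]$ bounded between multiples of $K^{-1}$ and $\phi'' \geq 2(\phi')^2$, which is exactly what makes the Hou-Ma-Wu cancellation work, while $\psi$ is strictly convex and decreasing in $u$ over the relevant range.

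Since $G$ is continuous on $\overline X$, it attains its maximum at some point $x_0 \in \overline X$. If $x_0 \in \partial X$, then for every $x \in X$ we get $\log \lambda_1(x) \leq \log \lambda_1(x_0) + \phi(K) + \sup|\psi(u)| - \inf(\phi+\psi\circ u)$, and since $\lambda_1(x_0) \leq C(1 + \sup_{\partial X}|\ddb u|)$, this immediately yields the estimate \eqref{C2-interior}. The substance of the proof is therefore the interior case $x_0 \in X^\circ$, which is handled by the argument of \cite[Prop.\ 13]{Sz}. First I would choose unitary coordinates at $x_0$ so that $\alpha_{\bar k j} = \delta_{kj}$ and $h^i{}_j$ is diagonal with $\lambda_1 \geq \dots \geq \lambda_n$, and replace $\lambda_1$ by a smooth perturbation obtained by diagonalizing $h$ against a nearby frame (to deal with possible multiplicities). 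The conditions $\nabla G(x_0) = 0$ and $\sigma_k^{p\bar q} G_{p\bar q}(x_0) \leq 0$ are then differentiated out to give an inequality of the form
\[
0 \geq \sum_p \sigma_k^{p\bar p}\!\left(\frac{|\nabla_p h^1{}_1|^2}{\lambda_1^2} + \phi'' |\nabla_p|\nabla u|^2|^2 + \phi' (\text{good gradient terms}) + \psi'' |u_p|^2\right) + (\text{lower order}),
\]
together with a third-order contribution from $\sigma_k^{p\bar q, r\bar s}(\log \lambda_1)_{p\bar q}$ that, thanks to the concavity of $\log \sigma_k$ and the standard eigenvalue commutator identity, is dominated by $\sum_p 2\sigma_k^{p\bar p}|\nabla_p h^1{}_1|^2/\lambda_1^2$ and thus absorbed. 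The remaining error terms involve $(\chi + \ddb \underline u - \chi - \ddb u)$ after using the critical equation to express $\nabla_p \lambda_1$ in terms of $\nabla_p |\nabla u|^2$; here the subsolution hypothesis is invoked through the inequality
\[
\sigma_k^{p\bar q}(\underline u_{p\bar q} - u_{p\bar q}) \geq \delta\bigl(1 + \F\bigr)
\]
for some $\delta > 0$, which follows from the standard MacLaurin/Garding argument applied to $\sigma_k^{1/k}$ and the separation $\sigma_k(\underline\lambda) \geq \psi$.

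The contradiction is extracted by splitting into two cases according to the size of $-\lambda_n$: if $\lambda_n \geq -\theta \lambda_1$ for a small fixed $\theta$, the tensor $\sigma_k^{p\bar q}$ is uniformly bounded below on its full range and the subsolution term alone forces $\lambda_1 \leq CK$; if instead $-\lambda_n \geq \theta \lambda_1$, then $\sigma_k^{n\bar n} \gtrsim \F$, and the corresponding $\sigma_k^{n\bar n}\psi''|u_n|^2$ and $\sigma_k^{n\bar n}\phi''|\nabla_n|\nabla u|^2|^2$ contributions, combined with the gradient identity at a critical point of $G$, absorb the bad terms and again yield $\lambda_1 \leq CK$. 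In either case, unwinding the test function gives $\lambda_1(x) \leq CK$ everywhere, which is the claimed estimate. The main technical obstacle, as in all arguments of Hou-Ma-Wu type, is the bookkeeping of the third-order term from $\log\lambda_1$ and ensuring that the scale-invariant choice of $\phi$ keeps every bad contribution at most linear in $K$; apart from the boundary/interior dichotomy, this is exactly the computation carried out in \cite{Sz,HMW,ZhangDK}, which we may therefore invoke.
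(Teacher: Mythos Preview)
Your proposal is correct and follows essentially the same approach as the paper: both argue via the Hou--Ma--Wu/Sz\'ekelyhidi test function $G = \log\lambda_1 + \phi(|\nabla u|^2) + \psi(u)$, reduce to the dichotomy of whether the maximum lies on $\partial X$ (trivial) or in the interior (cite \cite[Prop.~13]{Sz}), and conclude $\lambda_1 \leq CK$. The paper simply invokes the interior computation as a black box, while you sketch its main steps, but the strategy is identical.
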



Our main task is to estimate $\| \ddb u \|$ on the boundary $\p X$, which will be done in the following sections.
\smallskip
\par Before proceeding, we note a few more inequalities of elementary symmetric polynomials which will be used. For $\lambda$ in the cone $\Gamma_k$, we have the generalized Newton-Maclaurin inequalities
\be \label{newton-mac}
\left( {H_k \over H_s} \right)^{1/(k-s)} \leq \left( {H_\ell \over H_r} \right)^{1/(\ell-r)}, \ \ H_k = {\sigma_k(\lambda) \over {n \choose k}},
\ee
provided $k > s \geq 0$, $\ell > r \geq 0$, and $k \geq \ell$, $s \geq r$ (see e.g. \cite{Spruck}). 
\smallskip
\par Let $\lambda = (\lambda_1, \dots, \lambda_n) \in \Gamma_k$ be ordered such that $\lambda_1 \geq \dots \geq \lambda_n$. Then $\lambda_1 > 0$. As noted in Hou-Ma-Wu \cite{HMW}, we can estimate
\be \label{sigma11-lower-bdd}
\sigma_{k-1}(\lambda|1) \geq {k \over n} {\sigma_k(\lambda) \over \lambda_1}.
\ee
Indeed, following \cite{HMW}, rearranging the identity $\sigma_k = \lambda_1 \sigma_{k-1}(\lambda|1) + \sigma_k(\lambda|1)$ gives
\be \label{sigma-k-identity}
\lambda_1 \sigma_{k-1}(\lambda|1) = \sigma_k - \sigma_k(\lambda|1).
\ee
By the generalized Newton-Maclaurin inequalities
\[
{\sigma_k(\lambda|1) \over \sigma_1(\lambda|1)} \leq {n-k \over k(n-1)} \sigma_{k-1}(\lambda|1).
\]
Since $\sigma_1(\lambda|1) \leq (n-1) \lambda_1$, we obtain
\[
\sigma_k(\lambda|1) \leq \left({n \over k}-1\right) \lambda_1 \sigma_{k-1}(\lambda|1),
\]
and substituting this into (\ref{sigma-k-identity}) gives (\ref{sigma11-lower-bdd}). 
\smallskip
\par As a consequence of (\ref{sigma11-lower-bdd}), for every index $i \in \{1, \dots, n\}$,
\be \label{ellipticity}
\sigma_{k-1}(\lambda|i) > 0,
\ee
since $\sigma_{k-1}(\lambda|i) \geq \sigma_{k-1}(\lambda|1)$.
\smallskip
\par Next, as stated in \cite{Guan14}, we have the following 
\begin{lem}
Suppose $\lambda \in \Gamma_k$.  Then, for any $\lambda \in \Gamma_k$, index $r$, and $\epsilon>0$, we may estimate
\be \label{quad-split}
\sum_i \sigma_{k-1}(\lambda|i) |\lambda_i| \leq \epsilon \sum_{i \neq r} \sigma_{k-1}(\lambda|i) \lambda_i^2 + {C \over \epsilon} \sum_i \sigma_{k-1}(\lambda|i) + C,
\ee
where $C$ depends on $n$ and $\sigma_k(\lambda)$.
\end{lem}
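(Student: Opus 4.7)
The strategy is to split the sum over $i$ into the single term $i=r$ and the sum over $i\neq r$, treat the $i\neq r$ terms by an elementary AM-GM splitting, and exploit the Euler-type identity $\sum_i \sigma_{k-1}(\lambda|i)\lambda_i = k\sigma_k(\lambda)$ to handle the single missing index $i=r$.

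First, for every index $i\neq r$, I would apply the weighted AM-GM inequality $ab\le \epsilon a^2 + \tfrac{1}{4\epsilon}b^2$ with $a=\sqrt{\sigma_{k-1}(\lambda|i)}\,|\lambda_i|$ and $b=\sqrt{\sigma_{k-1}(\lambda|i)}$ (legal because $\sigma_{k-1}(\lambda|i)>0$ on $\Gamma_k$ by \eqref{ellipticity}). This yields
\[
\sigma_{k-1}(\lambda|i)\,|\lambda_i| \;\le\; \epsilon\,\sigma_{k-1}(\lambda|i)\lambda_i^2 \;+\; \tfrac{1}{4\epsilon}\,\sigma_{k-1}(\lambda|i),
\qquad i\neq r.
\]

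For the remaining term $i=r$, I cannot use the same AM-GM inequality since the expression $\sigma_{k-1}(\lambda|r)\lambda_r^2$ is excluded from the right-hand side. Instead I would invoke the identity
\[
\sum_{i=1}^n \sigma_{k-1}(\lambda|i)\,\lambda_i \;=\; k\,\sigma_k(\lambda),
\]
which gives
\[
\sigma_{k-1}(\lambda|r)\lambda_r \;=\; k\sigma_k(\lambda) - \sum_{i\neq r}\sigma_{k-1}(\lambda|i)\lambda_i.
\]
Taking absolute values and using $\sigma_k(\lambda)\le C$ (allowed since the constant may depend on $\sigma_k(\lambda)$),
\[
\sigma_{k-1}(\lambda|r)\,|\lambda_r| \;\le\; C + \sum_{i\neq r}\sigma_{k-1}(\lambda|i)\,|\lambda_i|.
\]

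Finally I would combine the two bounds. Adding the $i=r$ estimate to $\sum_{i\neq r}\sigma_{k-1}(\lambda|i)|\lambda_i|$ gives
\[
\sum_{i}\sigma_{k-1}(\lambda|i)\,|\lambda_i| \;\le\; C + 2\sum_{i\neq r}\sigma_{k-1}(\lambda|i)\,|\lambda_i|,
\]
and then inserting the AM-GM bound for the $i\neq r$ sum yields
\[
\sum_{i}\sigma_{k-1}(\lambda|i)\,|\lambda_i| \;\le\; 2\epsilon\sum_{i\neq r}\sigma_{k-1}(\lambda|i)\lambda_i^2 + \tfrac{1}{2\epsilon}\sum_{i\neq r}\sigma_{k-1}(\lambda|i) + C.
\]
Rescaling $\epsilon$ (replace $2\epsilon$ by $\epsilon$) and enlarging $\sum_{i\neq r}\sigma_{k-1}(\lambda|i)$ to $\sum_i\sigma_{k-1}(\lambda|i)$ produces the claimed inequality. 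There is no real obstacle here; the only subtle point is recognizing that the omitted index $r$ on the quadratic right-hand side can be compensated by Euler's identity, which trades the problematic $\lambda_r^2$ for the controlled quantity $\sigma_k(\lambda)$.
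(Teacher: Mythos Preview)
Your proof is correct and in fact slightly cleaner than the paper's. Both arguments isolate the index $i=r$, bound $\sigma_{k-1}(\lambda|r)|\lambda_r|$ in terms of the $i\neq r$ sum, and then apply AM--GM. The differences are twofold.

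First, for the $i=r$ term the paper invokes concavity of $\log\sigma_k$ to obtain $\sum_i \sigma_{k-1}(\lambda|i)(\lambda_i-1)\le C$, whereas you use the Euler identity $\sum_i\sigma_{k-1}(\lambda|i)\lambda_i=k\sigma_k(\lambda)$ directly; the latter is sharper and avoids an unnecessary detour. Second, and more substantively, the paper splits into the cases $\lambda_r\ge 0$ and $\lambda_r<0$. In the negative case it runs a separate argument comparing $\sigma_{k-1}(\lambda|r)\lambda_r^2$ to $\sigma_{k-1}(\lambda|n)\lambda_n^2$ via the ordering $\lambda_1\ge\cdots\ge\lambda_n$, and then shows $\sigma_{k-1}(\lambda|n)\lambda_n^2\le(n-1)\sum_{i\neq n}\sigma_{k-1}(\lambda|i)\lambda_i^2$. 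Your observation that the triangle inequality applied to the Euler identity handles both signs uniformly makes this entire second case unnecessary. What the paper's route buys is perhaps a slightly more explicit constant, but your argument is shorter and does not require any ordering of the eigenvalues.
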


\begin{proof}
 Let us recall the proof from \cite{Guan14,GuanSpruck}. We will use the notation $\sigma_k^{i \bar{i}} = \sigma_{k-1}(\lambda|i)$. We write
\[
\sum_i \sigma_k^{i \bar{i}} |\lambda_i| = \sigma_{k}^{r \bar{r}} |\lambda_r| + \sum_{i \neq r} \sigma_{k}^{i \bar{i}} |\lambda_i|.
\]
If $\lambda_r \geq 0$, then by concavity of $\log \sigma_k$ \cite{CNS3},
\[
\sum_i \sigma_k^{i \bar{i}} (\lambda_i - 1) \leq \sigma_k [\log \sigma_k(\lambda) - \log \sigma_k( {\bf 1})] \leq C,
\]
hence
\[
\sigma_k^{r \bar{r}} \lambda_r \leq C + \sum \sigma_k^{i \bar{i}} + \sum_{k \neq r} \sigma_k^{i \bar{i}} |\lambda_i|.
\]
It follows that
\[
\sum_{i} \sigma_k^{i \bar{i}} |\lambda_i| \leq C + \sum \sigma_k^{i \bar{i}} + 2 \sum_{i \neq r} \sigma_k^{i \bar{i}} |\lambda_i| \leq \epsilon \sum_{i \neq r} \sigma_k^{i \bar{i}} \lambda_i^2 + {C \over \epsilon} \sum \sigma_k^{i \bar{i}} + C, 
\]
which proves (\ref{quad-split}). Otherwise, we have $\lambda_r \leq 0$. In this case, $\sigma_k^{r \bar{r}} \lambda_r^2 \leq \sigma_k^{n \bar{n}} \lambda_n^2$, and
\be \label{quad-split1}
\sum_i \sigma_{k}^{i \bar{i}} |\lambda_i| \leq {\epsilon \over 2(n-1)} \sigma_k^{n \bar{n}} \lambda_n^2 + {\epsilon \over 2} \sum_{i \neq r} \sigma_k^{i \bar{i}} \lambda_i^2 + {C \over \epsilon} \sum \sigma_k^{i \bar{i}},
\ee
for some constant $C$. Since 
\[
\sum_{i \neq n} \sigma_k^{i \bar{i}} \lambda_i = k \sigma_k + \sigma_k^{n \bar{n}} |\lambda_n| \geq \sigma_k^{n \bar{n}} |\lambda_n|,
\]
we have
\[
(\sigma_k^{n \bar{n}})^2 \lambda_n^2  \leq \left( \sum_{i \neq n} \sigma_k^{i \bar{i}} \right) \left( \sum_{i \neq n} \sigma_k^{i \bar{i}} \lambda_i^2 \right).
\]
Therefore
\[
\sigma_k^{n \bar{n}} \lambda_n^2 \leq (n-1) \sum_{i \neq n} \sigma_k^{i \bar{i}} \lambda_i^2,
\]
which leads to
\[
\sigma_k^{n \bar{n}} \lambda_n^2  \leq (n-1) \sum_{i \neq r} \sigma_k^{i \bar{i}} \lambda_i^2.
\]
since $\sigma_k^{r \bar{r}} \lambda_r^2 \leq \sigma_k^{n \bar{n}} \lambda_n^2$. Substituting into (\ref{quad-split1}) proves (\ref{quad-split}).
\end{proof}

\smallskip
\par Lastly, we note a useful inequality. Let $A=[A_{\bar{k} j}]$ be an $n \times n$ Hermitian matrix with eigenvalues $(\lambda_1, \dots , \lambda_n)$, ordered such that $\lambda_1 \geq \dots \geq \lambda_n$, and let $(f_1,\dots,f_n)$ be a vector with components ordered such that $0 \leq f_1 \leq \dots \leq f_n$. Then
\be \label{schur-horn-ineq}
\sum_i f_i A_{\bar{i} i} \geq \sum_i f_i \lambda_i.
\ee
Indeed, by the Schur-Horn theorem \cite{Horn}, the vector $(A_{\bar{1} 1}, A_{\bar{2} 2}, \dots, A_{\bar{n} n})$ of the diagonal entries of $A$ is a convex combination of vectors of the form $(\lambda_{\sigma(1)}, \dots, \lambda_{\sigma(n)})$, where $\sigma$ is a permutation. Inequality (\ref{schur-horn-ineq}) can also be found in \cite{Marc}.

\section{Boundary Mixed Normal-Tangential Estimates}
The goal of this section is to prove an estimate for the mixed normal-tangential derivatives.  Before stating the result precisely, let us introduce some notation.

\subsection{Setup}
\par Let $p \in \p X$ be a point inside a boundary chart $\Omega$. We choose coordinates $z=(z^1, \cdots, z^n)$ such that $p$ corresponds to the origin and $\alpha_{\bar{k} j}(0) = \delta_{kj}$. The defining function of the boundary will be denoted by $\rho$, so that
\be \label{defining-function}
\p X \cap \Omega = \{ \rho = 0 \}, \ \ \ \Omega \subseteq \{ \rho \leq 0 \}, \ \ d \rho \neq 0 \ {\rm on} \ \p X .
\ee
Let
\[
T^{1,0}_p \p X = T_p \p X \cap J T_p \p X = \{ V \in T_p^{1,0} X : V(\rho) = 0\}.
\]
We orthogonally rotate our coordinates such that
\[
T^{1,0}_0 \p X = {\rm Span} \, \bigg\{ {\partial \over \partial z^1}, \dots, {\partial \over \partial z^{n-1}} \bigg\},
\]
and $x^n$ is in the direction of the inner normal vector at the origin, while preserving $\alpha_{\bar{k} j}(0)=\delta_{kj}$. We can then Taylor expand $\rho$ to obtain
\[
\rho = \rho_{x^n}(0) x^n + O(|z|^2).
\]
After replacing $\rho$ with ${\rho \over -\rho_{x^n}(0)}$, we obtain
\be
\rho = - x^n + O(|z|^2).
\ee
We denote $z^i = x^i + \sqrt{-1} y^i$ and
\[
t^\alpha = y^\alpha, \ \ \alpha \in \{1, \cdots, n\}, \ \ t^{n+\alpha} = x^\alpha, \ \ \alpha \in \{1, \cdots, n-1\}.
\]
By the implicit function theorem, there exists a function $\zeta(t)$ such that
\[
\rho (t, \zeta(t)) = 0 .
\]
Since $u = \underline{u}$ on $\p X$, the tangential derivatives of $u$ can be estimated. Indeed, differentiating the equation above gives the relation
\be \label{bdd-t-deriv}
\p_{t^\alpha} (u- \underline{u}) = - \p_{x^n} (u-\underline{u}) \p_{t^\alpha} \zeta
\ee
on $\p X$, and
\be \label{bdd-tangent}
\p_{t^\alpha} \p_{t^\beta} (u -\underline{u})(0) = - \p_{x^n} (u-\underline{u})(0) \rho_{t^{\alpha} t^\beta}(0)
\ee
and hence
\[
| \p_{t^\alpha} \p_{t^\beta} u(0) | \leq C,
\]
where $C$ only depends on $(X,\alpha)$, $\chi$, $\underline{u}$, by the gradient estimate (\ref{bdd-grad}). The goal of this section is to prove
\begin{prop}
In the setting of Theorem~\ref{thm-c2-uniform}, and with the above notation, there is a constant $C$, depending only on $(X,\alpha), \underline{u}, \phi$, $|\psi|_{C^{2}(X)}, \inf_{X} \psi$, and $\chi$
so that the following estimate holds
\be \label{mixed-estimate}
|h_{\bar{n} i}|(0) \leq C K^{1/2},
\ee
where we recall the notation $K = 1 + \| \nabla u \|^2_{L^\infty(X,\alpha)}$.
\end{prop}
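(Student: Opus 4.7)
The plan is to adapt the tangential-barrier method of Caffarelli--Nirenberg--Spruck, refined in the subsolution framework by B.~Guan, to the Hermitian $k$-Hessian setting, with careful bookkeeping in the $K$-dependence so as to obtain the scale $K^{1/2}$ rather than the cruder $K$. In the boundary chart at $0$, for each tangential real coordinate $t^\alpha$ I would introduce the vector field
\[
T = \p_{t^\alpha} + \frac{\p_{t^\alpha}\rho}{-\p_{x^n}\rho}\,\p_{x^n},
\]
which satisfies $T\rho \equiv 0$ (hence is tangent to every level set of $\rho$, including $\p X$) and which reduces at the origin to $\p_{t^\alpha}$, since $\p_{t^\alpha}\rho(0) = 0$. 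Setting $W := T(u - \uu)$, the fact that $u = \uu$ on $\p X \cap \Omega$ and $T$ is tangent forces $W \equiv 0$ on $\p X \cap \Omega$. A bound $|\p_{x^n}W(0)| \leq CK^{1/2}$, combined with (\ref{bdd-tangent}) and the conversion $\p_i = \tfrac{1}{2}(\p_{x^i} - \sqrt{-1}\,\p_{y^i})$, will yield (\ref{mixed-estimate}) for $i \neq n$, the double-normal case $i = n$ being deferred to \S 5.

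Setting $L := \sigma_k^{p\bar q}\nabla_p\nabla_{\bar q}$, I would next differentiate $\sigma_k(\la) = \psi$ in the direction $T$ and combine with the inequality obtained from the subsolution condition $\sigma_k(\underline\la) \geq \psi$ and the concavity of $\sigma_k^{1/k}$, to establish
\[
|LW| \leq CK^{1/2}(\F + 1).
\]
The delicate point is that commutators of $L$ with $T$ produce terms of the form $\sigma_k^{p\bar q}(\nabla T)(\chi + \ddb u)$, whose naive bound is $C\F\cdot|\ddb u| \leq CK\F$; to reduce this factor of $K$ to $K^{1/2}$ I would apply Cauchy--Schwarz against the positive-definite tensor $\sigma_k^{p\bar q}$ and invoke the quadratic-splitting inequality (\ref{quad-split}) to absorb one factor of $|\ddb u|$ into the negative $\sigma_k^{p\bar q}\la^2$ contribution furnished by the barrier, leaving only a single factor $|\nabla u| \leq K^{1/2}$ out front. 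This is the main technical obstacle.

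For the barrier, on a half-ball $\Omega_\delta := \{|z| < \delta\} \cap \{\rho \leq 0\}$ I would use B.~Guan's construction
\[
\Phi := A\bigl(1 - e^{-\eta(u - \uu)}\bigr) + N\rho + \tau|z|^2.
\]
The subsolution inequality (cf.\ \cite{Sz}) supplies constants $\kappa, R > 0$ such that $\sigma_k^{p\bar q}(\chi_{\bar q p} + \uu_{\bar q p} - h^r{}_p\alpha_{\bar q r}) \geq \kappa(\F + 1)$ whenever $|\la| \geq R$, from which a standard computation (with $\eta, A$ large and $\delta, \tau$ small) yields $L\Phi \leq -\kappa'(\F + 1)$ on $\Omega_\delta$, while simultaneously ensuring $\Phi \geq 0$ on $\p\Omega_\delta$, $\Phi|_{\p X \cap \overline{\Omega_\delta}} = 0$, $\Phi(0) = 0$, and $|\p_{x^n}\Phi(0)| \leq C$.

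To conclude, on $\p\Omega_\delta \setminus \p X$ one has $|W| \leq C(1 + |\nabla u|) \leq CK^{1/2}$, while on $\p X \cap \overline{\Omega_\delta}$ both $W$ and $\Phi$ vanish; taking $M = C_1 K^{1/2}$ sufficiently large makes $M\Phi \pm W \geq 0$ on all of $\p\Omega_\delta$, and the preceding estimates combine to give $L(M\Phi \pm W) \leq -\kappa' M(\F + 1) + CK^{1/2}(\F + 1) \leq 0$ in $\Omega_\delta$. The maximum principle (Lemma \ref{lem-max-princ}) then forces $M\Phi \pm W \geq 0$ in the interior, and since both sides vanish at $0$, the inward normal derivative satisfies $|\p_{x^n}W(0)| \leq M|\p_{x^n}\Phi(0)| \leq CK^{1/2}$, which is (\ref{mixed-estimate}). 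As already flagged, the main obstacle is the estimate of $|LW|$: a careless bound would produce $|LW| \leq CK(\F + 1)$, forcing $M \sim K$ and yielding only $|h_{\bar n i}|(0) \leq CK$, which would be fatal for the blow-up strategy of \S 6. The refinement to the $K^{1/2}$ scale requires exploiting the first-derivative (rather than second-derivative) nature of the commutator coefficients arising from $[L,T]$, in the spirit of the analogous refinement carried out in \cite{Bouck} for the complex Monge--Amp\`ere equation.
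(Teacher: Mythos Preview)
Your overall strategy is right, but there is a genuine gap in the middle step. The claim that one can obtain $|LW| \leq CK^{1/2}(\F+1)$ using only Cauchy--Schwarz and (\ref{quad-split}) against your barrier $\Phi = A(1-e^{-\eta(u-\underline{u})}) + N\rho + \tau|z|^2$ does not go through. Computing $LW$ as in the paper, the commutator $[L,T]$ produces (after splitting $\partial_{x^n} = 2\partial_n + \sqrt{-1}\partial_{y^n}$) a term of the form $\sigma_k^{p\bar q}a_p\,\partial_{\bar q}\partial_{y^n}(u-\underline{u})$ with bounded $a_p$. Cauchy--Schwarz with weight $K^{-1/2}$ gives
\[
\frac{1}{K^{1/2}}\,\sigma_k^{p\bar q}\,\partial_p\partial_{y^n}(u-\underline{u})\,\partial_{\bar q}\partial_{y^n}(u-\underline{u}) \;+\; CK^{1/2}\F,
\]
and the first summand is \emph{not} bounded by $CK^{1/2}(\F+1)$; it is a genuine quadratic-in-second-derivatives term that must be absorbed by something in the barrier. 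Your $\Phi$ furnishes, via the exponential, only a first-derivative quadratic $-\sigma_k^{p\bar q}(u-\underline{u})_p(u-\underline{u})_{\bar q}$, which is of the wrong type. Likewise, to apply (\ref{quad-split}) with $\epsilon\sim K^{-1/2}$ you need an available negative term $-K^{-1/2}\sum_{i\neq r}\sigma_{k-1}(\lambda|i)\lambda_i^2$, which your barrier does not produce either.

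The paper closes exactly this gap by augmenting the barrier with two additional pieces: $-K^{-1/2}\sum_{i=1}^n(\partial_{y^i}(u-\underline{u}))^2$, whose $L$-image contains $+2K^{-1/2}\sigma_k^{p\bar q}\partial_p\partial_{y^i}(u-\underline{u})\partial_{\bar q}\partial_{y^i}(u-\underline{u})$ and cancels the bad term above; and $-K^{-1/2}\sum_{a=1}^{n-1}|\nabla_a(u-\underline{u})|^2$ (using a unitary frame tangent to level sets of $\rho$), whose $L$-image yields $+\frac{1}{2nK^{1/2}}\sum_{i\neq r}\sigma_{k-1}(\lambda|i)\lambda_i^2$, precisely the negative quadratic needed to absorb $\sum_i\sigma_{k-1}(\lambda|i)|\lambda_i|$ via (\ref{quad-split}) at scale $\epsilon\sim K^{-1/2}$. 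One also checks these extra pieces are $O(|z|^2)$ on $\partial X$ (via (\ref{bdd-t-deriv})) and $O(K^{1/2})$ on $\partial B_\delta$, so they can be dominated by $BK^{1/2}|z|^2$ on $\partial\Omega_\delta$. In short, the bound $|LW|\leq CK^{1/2}(\F+1)$ is false as stated; the correct statement is (\ref{diff-3x}), and the leftover terms there dictate the more elaborate barrier $\Psi$ used in \S4.6 rather than the simpler exponential barrier you propose.
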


A related estimate, with a different power of $K$ in a more general setting, is obtained in \cite{FGZ}. The power of $K^{1/2}$ here is crucial for later arguments, and this $K^{1/2}$ estimate generalizes the mixed normal-tangential estimate derived in \cite{Bouck} for complex Monge-Amp\`ere equations.

\subsection{First barrier}

\par Let $d$ denote the distance function to $\p X$. Define, for $N \gg 1$ and $c_0>0$ to be determined, the following barrier function due to B. Guan \cite{Guan98}
\be \label{v-defn}
v = (u- \underline{u}) + c_0 d - N d^2.
\ee
This barrier was also used in other works on nonlinear PDE in complex geometry, e.g. \cite{CPW,CY,FGZ,GuanLi10,GuanLi13,GuanSun,GuNgu}. We will use the notation
\[
\Omega_\delta = \Omega \cap \{ |z| < \delta \},
\]
for $\delta>0$ to be determined. The radius $\delta$ will at least be small enough such that the distance function $d$ is $C^2$ in $\Omega_\delta$. (e.g. Lemma 14.16 in \cite{GT})

\begin{lem}
  There exists $c_0,N,\delta,\tau > 0$ depending on $(X,\alpha)$, $\chi$, $\underline{u}$, $\sup_{X}\psi, \inf_{X}\psi$, such that $v: \Omega_\delta \rightarrow \mathbb{R}$ defined by (\ref{v-defn}) satisfies
\be \label{v-barrier}
\sigma_k^{p \bar{q}} \partial_p \partial_{\bar{q}} v \leq - \tau (1+\F),
\ee
and
\[
v \geq 0.
\]
\end{lem}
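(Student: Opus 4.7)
The proof splits into two parts: showing $v \geq 0$ on $\Omega_\delta$, and establishing the differential inequality $\sigma_k^{p\bar{q}} v_{\bar{q}p} \leq -\tau(1+\F)$. Non-negativity is essentially immediate: by the comparison principle Lemma~\ref{lem-max-princ} applied to $u$ and $\underline{u}$, we have $u \geq \underline{u}$ on $\overline{X}$, so $u - \underline{u} \geq 0$. The remaining piece factors as $c_0 d - N d^2 = d(c_0 - Nd) \geq 0$ on $\{d \leq c_0/N\}$, so any $\delta \leq c_0/N$ (also small enough that $d$ is $C^2$ on $\Omega_\delta$, per Lemma 14.16 of \cite{GT}) yields $v \geq 0$ on $\Omega_\delta$.

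For the differential inequality, I would expand via Leibniz, using $(d^2)_{p\bar{q}} = 2 d_p d_{\bar{q}} + 2 d\, d_{p\bar{q}}$, to obtain
\[
\sigma_k^{p\bar{q}} v_{\bar{q}p} = \sigma_k^{p\bar{q}}(u - \underline{u})_{\bar{q}p} + (c_0 - 2Nd)\,\sigma_k^{p\bar{q}} d_{\bar{q}p} - 2N\,\sigma_k^{p\bar{q}} d_p d_{\bar{q}},
\]
and handle each piece separately. The first term is non-positive by concavity of $\sigma_k^{1/k}$ on $\Gamma_k$ and the subsolution condition $\sigma_k(\underline{\lambda}) \geq \psi = \sigma_k(\lambda)$: concretely, the Schur-Horn inequality (\ref{schur-horn-ineq}) applied in the eigenbasis of $\chi + \ddb u$ gives $\sigma_k^{p\bar{q}}\underline{A}_{\bar{q}p} \geq \sum_i \sigma_{k-1}(\lambda|i)\underline{\lambda}_i$, and then the Newton-MacLaurin inequality (\ref{newton-mac}) combined with Euler's identity $\sum_i \sigma_{k-1}(\lambda|i)\lambda_i = k\psi$ yields $\sum_i \sigma_{k-1}(\lambda|i)\underline{\lambda}_i \geq k\psi = \sigma_k^{p\bar{q}} A_{\bar{q}p}$, so $\sigma_k^{p\bar{q}}(u - \underline{u})_{\bar{q}p} \leq 0$. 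The middle piece is controlled in absolute value by $(c_0 + 2Nd)\,\|D^2 d\|_{L^\infty(\Omega_\delta)}\,\F$. The third piece is the principal negative contribution.

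The central technical step is a lower bound of the form $\sigma_k^{p\bar{q}} d_p d_{\bar{q}} \geq c(1 + \F)$ for a constant $c$ depending only on background data. Since $|\nabla d| = 1$ on $\partial X$, shrinking $\delta$ gives $|\nabla d|^2 \geq 1/2$ on $\Omega_\delta$; combined with the positive-definiteness of $\sigma_k^{p\bar{q}}$ and the structural bounds $\sigma_{k-1}(\lambda|n) \geq \F/n$ (which holds since $\lambda_n$ is the smallest eigenvalue of $\chi + \ddb u$, making the complementary elementary symmetric polynomial largest) and $\sigma_{k-1}(\lambda|1) \geq k\psi/(n\lambda_1)$ from (\ref{sigma11-lower-bdd}), a careful analysis produces the required lower bound. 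Parameters are then fixed in the order $N$ large (so that $-2N\sigma_k^{p\bar{q}} d_p d_{\bar{q}}$ dominates), then $c_0$ small and $\delta \leq c_0/N$ so the positive $\F$ contribution from the middle piece is absorbed, yielding the asserted $-\tau(1 + \F)$.

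The main obstacle is justifying the lower bound $\sigma_k^{p\bar{q}} d_p d_{\bar{q}} \geq c(1 + \F)$ in the regime where $\nabla d$ does not align with the principal eigendirection of $\sigma_k^{p\bar{q}}$, which is the generic situation in the chosen boundary chart. This requires combining the fixed coordinates adapted to $\partial X$ with the eigenstructure of $\chi + \ddb u$, and may proceed by a case analysis on the size of $|\lambda|$: when $|\lambda|$ is bounded, $\sigma_k^{p\bar{q}}$ is uniformly elliptic and the desired constant is immediate, whereas when $|\lambda|$ is large the Newton-MacLaurin bounds pin down the distribution of $\sigma_{k-1}(\lambda|i)$ sufficiently. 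If the direct route falls short in some regime, a local strict-subsolution modification of $\underline{u}$ on $\Omega_\delta$ (adding a small multiple of a non-negative function vanishing on $\partial X$ with $\ddb$ bounded below by $c\alpha$) upgrades the first-term estimate via strict concavity of $\sigma_k^{1/k}$ to supply the missing slack.
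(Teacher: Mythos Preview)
The central claim of your approach---that $\sigma_k^{p\bar{q}} d_p d_{\bar{q}} \geq c(1+\F)$ for some uniform $c>0$---is false, and no case analysis on $|\lambda|$ will rescue it. In an eigenbasis of $h = \alpha^{-1}(\chi+\ddb u)$ one has $\sigma_k^{p\bar{q}} d_p d_{\bar{q}} = \sum_i \sigma_{k-1}(\lambda|i)\,|d_i|^2$ with $\sum_i |d_i|^2 = |\partial d|^2 = \tfrac14$. If $\nabla d$ happens to lie in the $\lambda_1$-eigendirection (which nothing in the setup prevents), this equals $\tfrac14\sigma_{k-1}(\lambda|1)$. But when $\lambda_1\to\infty$ with the remaining $\lambda_i$ bounded, $\sigma_{k-1}(\lambda|1)$ stays bounded while $\F \geq \sigma_{k-1}(\lambda|n)$ grows like $\lambda_1^{k-1}$ (or $\lambda_1$ when $k=2$). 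The ratio $\sigma_k^{p\bar{q}} d_p d_{\bar{q}}/\F$ thus tends to zero, and your proposed ``Newton--MacLaurin bounds pin down the distribution'' argument cannot recover a uniform constant. The gradient term only yields $-2N\sigma_k^{p\bar{q}} d_p d_{\bar{q}} \leq -\tfrac{N}{2}\sigma_{k-1}(\lambda|1)$, which may be arbitrarily small.

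The paper avoids this by sourcing the $-\tau\F$ elsewhere. From the subsolution term, choosing $\tau>0$ with $\underline{\lambda}-\tau\mathbf{1}\in\Gamma_k$ and using concavity of $\log\sigma_k$ (not merely $\leq 0$ as you have) gives $\sigma_k^{p\bar{q}}(u-\underline{u})_{\bar{q}p} \leq -\tau\F + C$; your ``fallback'' is in fact the main mechanism. This leaves an additive constant $+C$ to absorb, and here the paper applies G\aa rding's inequality to the vector $q=\tfrac12(\tau+N,\tau,\ldots,\tau)$:
\[
\tfrac{\tau}{2}\F + \tfrac{N}{2}\sigma_{k-1}(\lambda|1) = \sum_i \sigma_{k-1}(\lambda|i)\,q_i \;\geq\; k\,\sigma_k(\lambda)^{(k-1)/k}\sigma_k(q)^{1/k},
\]
and since $\sigma_k(q)\geq N\tau^{k-1}/2^k$, taking $N$ large makes the right side exceed $C+\tfrac{\tau}{4}$. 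This is precisely the step that compensates for the possible smallness of $\sigma_{k-1}(\lambda|1)$: when it is small, $\F$ must be large enough (via G\aa rding) to take over. Your outline is missing this interplay, and without it the additive constant cannot be absorbed.
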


\par \noindent {\it Proof:} We compute
\bea \label{v-compute}
\sigma_k^{p \bar{q}} \p_p \p_{\bar{q}} v &=& \sigma_k^{p \bar{q}} \partial_p \partial_{\bar{q}} (u-\underline{u}) + c_0 \sigma_k^{p \bar{q}} \partial_p \partial_{\bar{q}} d  \nonumber\\
&&- 2 N \sigma_k^{p \bar{q}} \partial_p d \partial_{\bar{q}} d - 2 N d \sigma_k^{p \bar{q}} \partial_p \partial_{\bar{q}} d.
\eea
We will show (\ref{v-barrier}) by working at a point with coordinates such that $\alpha_{\bar{k} j} = \delta_{kj}$ and $h^i{}_j = \lambda_j \delta^i{}_j$. We start by writing
\[
\sigma_k^{p \bar{q}} \partial_p \partial_{\bar{q}} (u-\underline{u}) = \sigma_k^{p \bar{p}} (\lambda_p - (\chi_{\bar{p} p} + \underline{u}_{\bar{p} p})).
\]
The eigenvalues of $\chi + \ddb \underline{u}$ are denoted by $\underline{\lambda}$ and ordered $\underline{\lambda}_n \leq \cdots \leq \underline{\lambda}_1$. On the other hand, $\sigma_k^{1 \bar{1}}(\lambda) \leq \cdots \leq \sigma_k^{n \bar{n}}(\lambda)$. By the Schur-Horn theorem (\ref{schur-horn-ineq}),
\[
\sigma_k^{p \bar{p}} (\chi_{\bar{p} p} + \underline{u}_{\bar{p} p}) \geq \sigma_k^{p \bar{p}} \underline{\lambda}_p,
\]
and
\[
\sigma_k^{p \bar{q}} \partial_p \partial_{\bar{q}} (u-\underline{u}) \leq -\tau \F + \sigma_k^{p \bar{p}} ( \lambda_p - (\underline{\lambda}_p-\tau)),
\]
for any $\tau>0$. Choose $\tau>0$ such that
\[
\underline{\lambda} - \tau {\bf 1} \in \Gamma_k,
\]
where ${\bf 1} = ( 1, \dots, 1 )$. Since $\log \sigma_k$ is convex on $\Gamma_k$ \cite{CNS3,Garding},
\[
\sum_p {\sigma_k^{p \bar{p}} \over \sigma_k}(\lambda) [ \lambda_p - (\underline{\lambda}_p-\tau)] \leq \log \sigma_k(\lambda) - \log \sigma_k (\underline{\lambda} - \tau {\bf 1}).
\]
Therefore
\be \label{v-est1}
\sigma_k^{p \bar{q}} \partial_p \partial_{\bar{q}} (u-\underline{u}) \leq -\tau \F + C(\sup_X\psi,\underline{\lambda}).
\ee
Next, since $d$ is smooth in $\Omega_\delta$ for small enough $\delta>0$, we estimate
\be \label{v-est2}
 c_0 \sigma_k^{p \bar{q}} \partial_p \partial_{\bar{q}} d - 2N d \sigma_k^{p \bar{q}} \partial_p \partial_{\bar{q}} d \leq c_0 C \F + N d C \F.
\ee
We also estimate
\be \label{v-est3}
 - 2 N \sigma_k^{p \bar{q}} \partial_p d \partial_{\bar{q}} d \leq - 2 N \sigma_k^{1 \bar{1}} |\partial d|^2 = - {N \over 2} \sigma_k^{1 \bar{1}}, 
\ee
since $\sigma_k^{1 \bar{1}} \leq \sigma_k^{i \bar{i}}$ for any index $i$ and $|\partial d| = {1 \over 2}$ for the distance function.
\smallskip
\par Substituting (\ref{v-est1}), (\ref{v-est2}), (\ref{v-est3}) into (\ref{v-compute}), we obtain
\[
\sigma_k^{p \bar{q}} \partial_p \partial_{\bar{q}} v \leq - \tau \F - {N \over 2} \sigma_k^{1 \bar{1}} + C(c_0 + Nd) \F + C .
\]
In $\Omega_{\delta}$ we can find a constant $A$, depending only on $(X,\alpha)$ such that $d \leq A|z|$.  Then, if
\be \label{v-cond1}
(c_0 + N A\delta) \leq {\tau \over 4 C},
\ee
then
\be \label{L-v}
\sigma_k^{p \bar{q}} \partial_p \partial_{\bar{q}} v \leq - {3 \tau \over 4} \F - {N \over 2} \sigma_k^{1 \bar{1}} + C .
\ee
Next, we write
\[
- {\tau \over 2} \F - {N \over 2} \sigma_k^{1 \bar{1}} =  -\sum_i \sigma_{k-1}(\lambda|i) q_i
\]
where
\[
q = {1 \over 2} \left( \tau  +N, \tau , \dots, \tau \right).
\]
The G\aa rding inequality \cite{Garding}, which is
\[
\sum_i \sigma_{k-1}(\lambda|i) q_i \geq k \sigma_{k}(\lambda)^{(k-1)/k} \sigma_{k}(q)^{1/k},
\]
implies
\[
- {\tau \over 2} \F - {N \over 2} \sigma_k^{1 \bar{1}} \leq - k \sigma_{k}(\lambda)^{(k-1)/k} \sigma_{k}(q)^{1/k}. 
\]
We can calculate
\[
\sigma_{k}(q) = {1 \over 2^k} \bigg[  \tau^{k-1}  (\tau + N) {n-1 \choose k-1} + \tau^k {n-1 \choose k} \bigg] \geq {N \tau^{k-1} \over 2^k}.
\]
Therefore, if
\be \label{v-cond2}
N^{1/k} \geq {2 \over k} \bigg( {1 \over \tau \psi} \bigg)^{(k-1)/k} \left(C+ {\tau \over 4} \right),
\ee
then
\[
- {\tau \over 2} \F - {N \over 2} \sigma_k^{1 \bar{1}} \leq - C - {\tau \over 4}.
\]
By (\ref{L-v}), we then have
\[
\sigma_k^{p \bar{q}} \partial_p \partial_{\bar{q}} v \leq - {\tau \over 4} (1+\F).
\]
We can rename ${\tau \over 4}$ to $\tau$ to get the stated inequality (\ref{v-barrier}).
\smallskip
\par Lastly, we note that $v \geq 0$. Indeed, since $\underline{u} \leq u$ by the maximum principle, we have
\[
v \geq d (c_0 - Nd).
\]
As before we can estimate $d(z) \leq A |z|$. Therefore $v \geq 0$ in $\Omega_\delta$, provided
\be \label{v-cond3}
\delta \leq {c_0 \over AN}.
\ee
The lemma follows by choosing constants $c_0,N,\delta$ satisfying (\ref{v-cond1}), (\ref{v-cond2}) and (\ref{v-cond3}). $\qed$
\bigskip
\par

\subsection{Tangential derivatives}
Let $\alpha \in \{1,\dots, 2n-1 \}$. We define in $\Omega_\delta$ the real vector fields
\[
T_\alpha = {\partial \over \partial t^\alpha} - {\rho_{t^\alpha} \over \rho_{x^n}} {\partial \over \partial x^n}.
\]
These are tangential to the level sets of $\rho$, i.e. $T_\alpha(\rho) = 0$.
\smallskip
\par In this section, we will use the notation $\mathcal{E}$ to denote terms which can be estimated by
\[
|\mathcal{E}| \leq C  (1 + K^{1/2}) \F + C \sum_i \sigma_{k-1}(\lambda|i) |\lambda_i| + C,
\]
where as usual $C$ is only allowed to depend on $(X,\alpha)$, $\chi$, $\underline{u}$.

\begin{lem}
There exists $\delta>0$ depending on $(X,\alpha)$ such that we can estimate
\be \label{diff-3x}
\left| \sigma_k^{p \bar{q}} \partial_p \partial_{\bar{q}} \, T_\alpha (u-\underline{u}) \right| \leq {1 \over K^{1/2}} \sigma_k^{p \bar{q}} \partial_p \partial_{y^n} (u-\underline{u}) \partial_{\bar{q}} \partial_{y^n} (u-\underline{u}) + \mathcal{E}.
\ee
in $\Omega_\delta$.
\end{lem}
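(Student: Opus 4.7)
Write $T_\alpha = \partial_{t^\alpha} + a_\alpha \partial_{x^n}$, where $a_\alpha = -\rho_{t^\alpha}/\rho_{x^n}$ vanishes at the origin and is smooth with $\|a_\alpha\|_{C^2} \leq C$ in $\Omega_\delta$. Differentiating the equation $\sigma_k(h) = \psi$ along $T_\alpha$ gives $\sigma_k^{p \bar q} T_\alpha u_{\bar q p} = T_\alpha \psi - \sigma_k^{p \bar q} T_\alpha \chi_{\bar q p}$, whose absolute value is bounded by $C(1 + \mathcal{F})$, hence lies in $\mathcal{E}$. An analogous bound $|\sigma_k^{p \bar q} T_\alpha \underline u_{\bar q p}| \leq C \mathcal{F} \in \mathcal{E}$ holds by smoothness of $\underline u$.

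Next I would compute the commutator of $T_\alpha$ with $\partial_p \partial_{\bar q}$ using $[T_\alpha,\partial_p] = -(\partial_p a_\alpha)\partial_{x^n}$ and the similar identity for $\partial_{\bar q}$. This gives
\[
\partial_p \partial_{\bar q} T_\alpha f = T_\alpha f_{\bar q p} + (\partial_p \partial_{\bar q} a_\alpha) \partial_{x^n} f + (\partial_p a_\alpha)\partial_{\bar q} \partial_{x^n} f + (\partial_{\bar q} a_\alpha)\partial_p \partial_{x^n} f.
\]
Applying this with $f = u$ and $f = \underline u$, subtracting, and contracting with $\sigma_k^{p \bar q}$ reduces the estimate to three classes of terms. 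The piece $\sigma_k^{p \bar q} T_\alpha (u-\underline u)_{\bar q p}$ is in $\mathcal{E}$ by the previous paragraph. The piece involving $\partial_p \partial_{\bar q} a_\alpha$ is bounded by $C|\partial_{x^n}(u-\underline u)|\mathcal{F} \leq CK^{1/2}\mathcal{F} \in \mathcal{E}$ since $|\partial u| \leq K^{1/2}$. The only non-trivial contribution is the ``twisted'' pair $\sigma_k^{p \bar q}(\partial_p a_\alpha)\partial_{\bar q}\partial_{x^n}(u-\underline u) + \mathrm{c.c.}$

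The key idea for the twisted term is to decompose $\partial_{x^n}$ \emph{before} applying Cauchy--Schwarz. Starting from $\partial_{x^n} = \partial_n + \partial_{\bar n}$ and $\partial_{y^n} = i(\partial_{\bar n} - \partial_n)$, one obtains the identity
\[
\partial_{\bar q} \partial_{x^n}(u - \underline u) = 2(h_{\bar q n} - \underline h_{\bar q n}) - i \partial_{\bar q} \partial_{y^n}(u - \underline u),
\]
together with its conjugate. Substituting splits the twisted pair cleanly into an ``$h$-part'' $4\Re\{\sigma_k^{p \bar q}(\partial_p a_\alpha)(h_{\bar q n} - \underline h_{\bar q n})\}$ and a ``$\partial_{y^n}$-part'' $2\Im\{\sigma_k^{p \bar q}(\partial_p a_\alpha)\partial_{\bar q}\partial_{y^n}(u-\underline u)\}$. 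Evaluating at a point where $\alpha = \delta$ and $h$ is diagonal, the entries $h_{\bar q n}$ vanish except at $q=n$ where they equal $\lambda_n$, so the $h$-part is estimated linearly in the eigenvalues: $|\cdot| \leq C\sigma_{k-1}(\lambda|n)|\lambda_n| + C\mathcal{F} \leq C\sum_i \sigma_{k-1}(\lambda|i)|\lambda_i| + C\mathcal{F} \in \mathcal{E}$. For the $\partial_{y^n}$-part, a weighted Cauchy--Schwarz with parameter $\epsilon = K^{-1/2}$ yields exactly
\[
\frac{1}{K^{1/2}} \sigma_k^{p \bar q} \partial_p \partial_{y^n}(u-\underline u) \partial_{\bar q} \partial_{y^n}(u-\underline u) + C K^{1/2} \sigma_k^{p \bar q} \partial_p a_\alpha \partial_{\bar q} a_\alpha,
\]
and the last term is at most $CK^{1/2}\mathcal{F} \in \mathcal{E}$ since $|\partial a_\alpha| \leq C$.

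The main obstacle is avoiding quadratic-in-eigenvalue error terms. A naive Cauchy--Schwarz applied directly to $|\sigma_k^{p \bar q}(\partial_p a_\alpha)\partial_{\bar q}\partial_{x^n}(u - \underline u)|$ would produce the quadratic form $\sigma_k^{p \bar q} \partial_p \partial_{x^n}(u-\underline u)\partial_{\bar q}\partial_{x^n}(u-\underline u)$, which after substitution contains $\sigma_{k-1}(\lambda|n)\lambda_n^2$, a term that cannot be absorbed into $\mathcal{E}$. The preliminary decomposition of $\partial_{x^n}$ ensures that the potentially large factor $h_{\bar q n}$ is multiplied by the bounded quantity $\partial_p a_\alpha$, so it contributes only linearly; this is precisely the mechanism that forces the $K^{-1/2}$ scaling in the final bound and distinguishes the argument from standard Cauchy--Schwarz estimates.
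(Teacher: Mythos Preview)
Your approach is essentially the same as the paper's: expand $\partial_p\partial_{\bar q}T_\alpha(u-\underline u)$ by the product rule, control the third-order piece by differentiating the equation, and handle the mixed term by first writing $\partial_{x^n}=2\partial_n+\sqrt{-1}\,\partial_{y^n}$ so that the complex-Hessian part contributes only linearly in the eigenvalues, then applying Cauchy--Schwarz with weight $K^{-1/2}$ to the $\partial_{y^n}$ piece. Your discussion of why a naive Cauchy--Schwarz fails is exactly the point.

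There is one genuine (though easily repaired) gap. When you differentiate $\sigma_k(h)=\psi$ along $T_\alpha$ you write $\sigma_k^{p\bar q}T_\alpha u_{\bar q p}=T_\alpha\psi-\sigma_k^{p\bar q}T_\alpha\chi_{\bar q p}$ and claim this is bounded by $C(1+\F)$. On a general Hermitian manifold the eigenvalues are those of $h^{i}{}_{j}=\alpha^{i\bar q}h_{\bar q j}$, so differentiating produces an additional term $\frac{\partial\sigma_k}{\partial h^{r}{}_{p}}\,T_\alpha(\alpha^{r\bar q})\,h_{\bar q p}$ coming from the derivative of the metric (equivalently, the Christoffel terms in the paper's covariant computation). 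This extra term is \emph{not} bounded by $C(1+\F)$; at a point where $\alpha$ is the identity and $h$ is diagonal it equals $\sum_i\sigma_{k-1}(\lambda|i)\lambda_i\,T_\alpha(\alpha^{i\bar i})$, which is only controlled by $C\sum_i\sigma_{k-1}(\lambda|i)|\lambda_i|$. Since this quantity is already allowed in $\mathcal{E}$ your conclusion survives, but the intermediate bound you state is false and the missing term should be made explicit. A minor related point: when you ``evaluate at a point where $\alpha=\delta$ and $h$ is diagonal'', the index $n$ in $h_{\bar q n}$ refers to the fixed boundary coordinate, which need not be an eigendirection of $h$; the correct statement is that after passing to a unitary frame diagonalizing $h$ the term becomes $\sum_i\sigma_{k-1}(\lambda|i)\lambda_i\,(\text{bounded})$, which is again $\leq C\sum_i\sigma_{k-1}(\lambda|i)|\lambda_i|$.
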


\par \noindent {\it Proof:} We compute 
\[
\partial_p \partial_{\bar{q}} T_\alpha (u) = \partial_p \partial_{\bar{q}} \partial_{t^\alpha} u - \p_p \p_{\bar{q}} \left( {\rho_{t^\alpha} \over \rho_{x^n}} u_{x^n} \right).
\]
Then
\bea \label{L-e-alpha}
\sigma_k^{p \bar{q}} \p_p \p_{\bar{q}} T_\alpha (u) &=& \sigma_k^{p \bar{q}} \p_p \p_{\bar{q}} \p_{t^\alpha} u - {\rho_{t^\alpha} \over \rho_{x^n}} \sigma_k^{p \bar{q}} \p_p \p_{\bar{q}} \p_{x^n} u \nonumber\\
&&- 2 {\rm Re}\left( \, \sigma_k^{p \bar{q}}\left( \p_p {\rho_{t^\alpha} \over \rho_{x^n}} \right)\left( \p_{\bar{q}} \p_{x^n} u\right)\right) - \sigma_k^{p \bar{q}}\left( \p_p \p_{\bar{q}} {\rho_{t^\alpha} \over \rho_{x^n}}\right) \p_{x^n} u.
\eea
We will first address the third order terms. For $n+1\leq \beta \leq 2n-1$ we can write
\[
{\partial \over \partial t^\beta} = {\partial \over \partial z^{\beta-n}} + {\partial \over \partial \bar{z}^{\beta-n}}
\]
and for $1\leq \beta \leq n$ we have
\[
{\partial \over \partial t^\beta} = \frac{1}{\sqrt{-1}}\left({\partial \over \partial z^{\beta-n}} - {\partial \over \partial \bar{z}^{\beta-n}}\right)
\]
Both cases are identical, so we will only treat the first case,  writing $i=\beta-n$ for simplicity. Covariantly differentiating the equation gives the relation
\[
\nabla_{t^\beta} \psi = \sigma_k^{p \bar{q}} \nabla_{t^\beta} \chi_{\bar{q} p} + \sigma_k^{p \bar{q}} \nabla_{t^\beta} \nabla_p \nabla_{\bar{q}} u.
\]
Converting covariant derivatives to partial derivatives gives
\bea
\nabla_{t^\beta} \nabla_p \nabla_{\bar{q}}  u &=& \nabla_i \nabla_p \nabla_{\bar{q}} u + \nabla_{\bar{i}} \nabla_p \nabla_{\bar{q}} u \nonumber\\
&=& \partial_p \partial_{\bar{q}} \partial_{t^\beta} u - \Gamma^{r}{}_{i p} u_{\bar{q} r} - \Gamma^{\bar{r}}{}_{\bar{i} \bar{q}} u_{\bar{r} p}.
\eea
Therefore
\be \label{triple-d}
\sigma_k^{p \bar{q}} \partial_p \partial_{\bar{q}} \partial_\beta  u = \partial_\beta \psi - \sigma_k^{p \bar{q}} \nabla_\beta \chi_{\bar{q} p} + \sigma_k^{p \bar{q}} \Gamma^r{}_{ip} u_{\bar{q} r} + \sigma_k^{p \bar{q}} \Gamma^{\bar{r}}{}_{\bar{i} \bar{q}} u_{\bar{r} p},
\ee
and so
\be \label{3rd-order}
|\sigma_k^{p \bar{q}} \partial_p \partial_{\bar{q}} \partial_{t^\beta}  u |\leq \mathcal{E}.
\ee
Here we used that for a bounded matrix $A^\ell{}_p$, we can write
\[
\sigma_k^{p \bar{q}} A^\ell{}_{p} u_{\bar{q} \ell} = \sigma_k^{p \bar{q}} A^\ell{}_{p} h_{\bar{q} \ell} - \sigma_k^{p \bar{q}} A^\ell{}_{p} \chi_{\bar{q} \ell},
\]
where $h_{\bar{k} j} = \chi_{\bar{k} j} + u_{\bar{k} j}$, and estimate
\bea \label{f_i-|lambda_i|}
| \sigma_k^{p \bar{q}} A^\ell{}_{p} h_{\bar{q} \ell}| &=& \bigg| \sum_i \sigma_{k-1}(\lambda|i) \lambda_i S^i{}_p A^p{}_q \overline{S^i{}_q} \bigg| \nonumber\\
&\leq& C \sum_i \sigma_{k-1}(\lambda|i) |\lambda_i|,
\eea
where $S = S^i{}_j$ is a unitary matrix which simultaneously diagonalizes $h^p{}_q$ with diagonal elements $\lambda_i$, and ${\partial \sigma_k \over h^p{}_i}$ with diagonal elements $\sigma_{k-1}(\lambda|i)$ (see (\ref{sigma-k-ij})).
\smallskip
\par Substituting this into (\ref{L-e-alpha}), and using that $\rho_{x^n}$ is bounded below on $\Omega_\delta$ if $\delta>0$ is taken small, it follows that
\be
\sigma_k^{p \bar{q}} \p_p \p_{\bar{q}} \, T_\alpha (u-\underline{u}) = - 2 {\rm Re} \, \sigma_k^{p \bar{q}} \p_p {\rho_{t^\alpha} \over \rho_{x^n}} \p_{\bar{q}} \p_{x^n} (u-\underline{u}) + \mathcal{E}.
\ee
We can manipulate the first term on the right-hand side by
\bea
\sigma_k^{p \bar{q}} \partial_p {\rho_{t^\alpha} \over \rho_{x^n}} \partial_{\bar{q}} \partial_{x^n} (u-\underline{u}) &=& 2 \sigma_k^{p \bar{q}} \partial_p {\rho_{t^\alpha} \over \rho_{x^n}} \partial_{\bar{q}} \partial_{n} (u-\underline{u}) \nonumber\\
&&+ \sqrt{-1} \sigma_k^{p \bar{q}} \partial_p {\rho_{t^\alpha} \over \rho_{x^n}} \partial_{\bar{q}} \partial_{y^n} (u-\underline{u}) .
\eea
By the Cauchy-Schwarz inequality and (\ref{f_i-|lambda_i|}),
\bea
& \ & \left| \sigma_k^{p \bar{q}} \partial_p {\rho_{t^\alpha} \over \rho_{x^n}} \partial_{\bar{q}} \partial_{x^n} (u-\underline{u}) \right| \nonumber\\
&\leq&  {1 \over K^{1/2}} \sigma_k^{p \bar{q}} \partial_p \partial_{y^n} (u-\underline{u}) \partial_{\bar{q}} \partial_{y^n} (u-\underline{u}) \nonumber\\
&&+  C \sum_i \sigma_{k-1}(\lambda|i) |\lambda_i| + C (1+K^{1/2})\F. 
\eea
Putting everything together, we obtain (\ref{diff-3x}). $\qed$

\subsection{Quadratic gradient term}
In constructing a barrier, we will use the term
\[
{1 \over K^{1/2}} (\p_{y^i}(u-\underline{u}))^2,
\]
for $i \in \{1, \dots, n\}$. On $\p X$, we have the relation $\rho(t,\zeta(t)) = 0$, and by (\ref{bdd-t-deriv}) we note
\[
(\p_{y^i} (u- \underline{u}))^2 = (\p_{x^n} (u-\underline{u}))^2 (\p_{y^i} \zeta)^2.
\]
Since $\p_{y^i} \zeta(0) = \p_{y^i} \rho(0) = 0$, we have $|\p_{y^i} \zeta| \leq C |t|$, and hence
\be \label{bdd-y-grad}
(\p_{y^i} (u- \underline{u}))^2 \leq C |z|^2 \ \ {\rm on} \ \p X,
\ee
by the gradient estimate (\ref{bdd-grad}). Next, we compute
\bea 
& \ & {1 \over K^{1/2}} \sigma_k^{p \bar{q}} \p_p \p_{\bar{q}} (\p_{y^i}(u-\underline{u}))^2 \nonumber\\
&=& {2 \over K^{1/2}} \sigma_k^{p \bar{q}} \p_p (u_{y^i}-\underline{u}_{y^i}) \p_{\bar{q}} (u_{y^i} - \underline{u}_{y^i}) + {2 \over K^{1/2}} (u_{y^i}-\underline{u}_{y^i}) \sigma_k^{p \bar{q}}  \p_p  \p_{\bar{q}} (u_{y^i} - \underline{u}_{y^i}) \nonumber\\
&\geq&{2 \over K^{1/2}} \sigma_k^{p \bar{q}} \p_p (u_{y^i}-\underline{u}_{y^i}) \p_{\bar{q}} (u_{y^i} - \underline{u}_{y^i}) - C \left|\sigma_k^{p \bar{q}} \p_p \p_{\bar{q}} \p_{y^i} u \right| - C \F. \nonumber
\eea
By (\ref{triple-d}), we obtain
\be \label{quad-grad0}
{1 \over K^{1/2}} \sigma_k^{p \bar{q}} \p_p \p_{\bar{q}} (\p_{y^i}(u-\underline{u}))^2 \geq {2 \over K^{1/2}} \sigma_k^{p \bar{q}} \p_p \p_{y^i} (u-\underline{u}) \p_{\bar{q}} \p_{y^i} (u - \underline{u}) + \mathcal{E}.
\ee

\subsection{Quadratic gradient term with frame}
Let $e_a = e^i{}_a \p_i$ be a local orthonormal frame of $T^{1,0} X$, such that $\{ e_a \}_{a=1}^{n-1}$ are tangential to the level sets of $\rho$. We also impose
\[
e_a(0) = {\partial \over \partial z^a}, \ \ a \in \{1, \dots, n-1 \}.
\]
We can construct the $e_a$ as follows. For $a \in \{1, \dots, n-1 \}$, let
\[
E_a = {\partial \over \partial z^a} - \bigg[ {\p_{z^a} \rho \over \p_{z^n} \rho} \bigg] {\partial \over \partial z^n}.
\]
Since $\p \rho(0) = (0,\dots, -1/2)$, these define local sections of $T^{1,0}X$ around the origin. Furthermore, they are tangential to the level sets of $\rho$ since $T_a \rho  = 0$. We may perform the Gram-Schmidt process using the metric $\alpha$ to obtain smooth sections $\{ e_a \}_{a=1}^{n-1}$ of $T^{1,0} X$ which satisfy
\[
e_a (\rho) = 0, \ \ \alpha(e_a,\overline{e_b}) = \delta_{ab}.
\]
To complete the set, we let $e_n$ be given by
\[
e_n = {E_n \over |E_n|_\alpha}, \ \ E_n = {\partial \over \partial z^n} - \sum_a \alpha(\p_n, \overline{e_a}) e_a.
\]
For fixed $a$, we consider the term
\[
\nabla_a (u-\underline{u}) \overline{\nabla_a(u-\underline{u})} = e^i{}_a (u-\underline{u})_i \overline{e^j{}_a} (u-\underline{u})_{\bar{j}}.
\]
We compute
\bea
& \ &  \sigma_k^{p \bar{q}} \partial_p \partial_{\bar{q}} \, \{ \nabla_a (u-\underline{u}) \overline{\nabla_a(u-\underline{u})} \} \nonumber\\
&=& \sigma_k^{p \bar{q}} \partial_p \partial_{\bar{q}} (e^i{}_a \overline{e^j{}_a} )(u-\underline{u})_i (u-\underline{u})_{\bar{j}} +  \sigma_k^{p \bar{q}} \partial_{\bar{q}} (e^i{}_a \overline{e^j{}_a} ) (u-\underline{u})_{pi} (u-\underline{u})_{\bar{j}} \nonumber\\
&&+  \sigma_k^{p \bar{q}} \partial_{\bar{q}} (e^i{}_a \overline{e^j{}_a} )(u-\underline{u})_i (u-\underline{u})_{\bar{j} p} + \sigma_k^{p \bar{q}} \partial_p  (e^i{}_a \overline{e^j{}_a}) (u-\underline{u})_{\bar{q} i} (u-\underline{u})_{\bar{j}} \nonumber\\
&&+ \sigma_k^{p \bar{q}}   (e^i{}_a \overline{e^j{}_a})  (u-\underline{u})_{p\bar{q}i} (u-\underline{u})_{\bar{j}}+ \sigma_k^{p \bar{q}}  (e^i{}_a \overline{e^j{}_a}) (u-\underline{u})_{\bar{q} i} (u-\underline{u})_{\bar{j} p} \nonumber\\
&&+ \sigma_k^{p \bar{q}} \partial_p  (e^i{}_a \overline{e^j{}_a}) (u-\underline{u})_{i} (u-\underline{u})_{\bar{q}\bar{j}}+ \sigma_k^{p \bar{q}}  e^i{}_a \overline{e^j{}_a}  (u-\underline{u})_{pi}  (u-\underline{u})_{\bar{q}\bar{j}} \nonumber\\
&&+ \sigma_k^{p \bar{q}} e^i{}_a \overline{e^j{}_a} (u-\underline{u})_{i} (u-\underline{u})_{p\bar{q}\bar{j}}. \nonumber
\eea
By (\ref{3rd-order}) and (\ref{f_i-|lambda_i|}), we may group terms as
\bea \label{quad-grad1}
& \ & {1 \over K^{1/2}} \sigma_k^{p \bar{q}} \p_p \p_{\bar{q}} |\nabla_a (u-\underline{u})|^2 \nonumber\\
&=& {1 \over K^{1/2}} \sigma_k^{p \bar{q}} e^i{}_a (u-\underline{u})_{\bar{q} i} \overline{e^j{}_a} (u- \underline{u})_{\bar{j} p} + {1 \over K^{1/2}} \sigma_k^{p \bar{q}} e^i{}_a (u-\underline{u})_{p i} \overline{e^j{}_a} (u- \underline{u})_{\bar{j} \bar{q}} \nonumber\\
&& +  {2 \over K^{1/2}} {\rm Re} \, \sigma_k^{p \bar{q}} \p_{\bar{q}} (e^i{}_a \overline{e^j{}_a}) (u-\underline{u})_{ip} (u-\underline{u})_{\bar{j}}  + \mathcal{E} .
\eea
We start with the first term of (\ref{quad-grad1}). We can write
\[
\sigma_k^{p \bar{q}} e^i{}_a (u-\underline{u})_{\bar{q} i} \overline{e^j{}_a} (u- \underline{u})_{\bar{j} p} = \sigma_k^{p \bar{q}} (h_{\bar{q} a} - \underline{h}_{\bar{q} a}) (h_{\bar{a} p} - \underline{h}_{\bar{a} p}), 
\]
where we use the index $a$ for the frame direction $e_a$, and the notation $\underline{h}_{\bar{k} j} = \chi_{\bar{k} j} + \underline{u}_{\bar{k} j}$. By the Cauchy-Schwarz inequality,
\be \label{quad-grad2}
\sigma_k^{p \bar{q}} e^i{}_a (u-\underline{u})_{\bar{q} i} \overline{e^j{}_a} (u- \underline{u})_{\bar{j} p}  \geq {1 \over 2} \sigma_k^{p \bar{q}} h_{\bar{q} a} h_{\bar{a} p} + \mathcal{E}. 
\ee
The second term of (\ref{quad-grad1}) is nonnegative and we will leave it for now. Next, we work on the third term of (\ref{quad-grad1}). Since
\[
{\partial \over \partial z^i} = {\partial \over \partial \bar{z}^i} - \sqrt{-1} {\partial \over \partial y^i},
\]
we can manipulate the term
\bea
& \ &
{2 \over K^{1/2}} \sigma_k^{p \bar{q}} \p_{\bar{q}} (e^i{}_a \overline{e^j{}_a}) (u-\underline{u})_{ip} (u-\underline{u})_{\bar{j}}  \nonumber\\
&=& {2 \over K^{1/2}} \sigma_k^{p \bar{q}} \p_{\bar{q}} (e^i{}_a \overline{e^j{}_a}) (u-\underline{u})_{\bar{i} p} (u-\underline{u})_{\bar{j}}  - \sqrt{-1} {2 \over K^{1/2}} \sigma_k^{p \bar{q}} \p_{\bar{q}} (e^i{}_a \overline{e^j{}_a}) (u-\underline{u})_{y^i p} (u-\underline{u})_{\bar{j}} . \nonumber
\eea
By the Cauchy-Schwarz inequality and Young's inequality,
\bea
& \ & {2 \over K^{1/2}} \sum_{i,j} \left| (u-\underline{u})_{\bar{j}} \sigma_k^{p \bar{q}} \p_p \p_{y^i} (u-\underline{u}) \p_{\bar{q}} (e^i{}_a \overline{e^j{}_a})  \right|\nonumber\\
&\leq& 2 \sum_{i,j} \left( \sigma_k^{p \bar{q}} \p_p \p_{y^i} (u-\underline{u}) \p_{\bar{q}} \p_{y^i} (u-\underline{u}) \right)^{1/2} \left( \sigma_k^{p \bar{q}} \p_p (e^i{}_a \overline{e^j{}_a}) \p_{\bar{q}} (\overline{e^i{}_a} e^j{}_a) \right)^{1/2} \nonumber\\
&\leq& {1 \over (n-1) K^{1/2}} \sum_{i=1}^n \sigma_k^{p \bar{q}} \p_p \p_{y^i} (u-\underline{u}) \p_{\bar{q}} \p_{y^i} (u-\underline{u}) + \mathcal{E}. \nonumber
\eea
Therefore
\bea \label{quad-grad3}
& \ &  {2 \over K^{1/2}} {\rm Re} \, \sigma_k^{p \bar{q}} \p_{\bar{q}} (e^i{}_a \overline{e^j{}_a}) (u-\underline{u})_{ip} (u-\underline{u})_{\bar{j}}  \nonumber\\
&\geq& - {1 \over (n-1) K^{1/2}} \sum_{i=1}^n \sigma_k^{p \bar{q}} \p_p \p_{y^i} (u-\underline{u}) \p_{\bar{q}} \p_{y^i} (u-\underline{u})  + \mathcal{E}.
\eea
In (\ref{quad-grad1}), we apply (\ref{quad-grad2}) on the first term, drop the second term, and apply (\ref{quad-grad3}) on the third term. We are left with
\bea \label{grad-squared-ineq}
& \ & \sigma_k^{p \bar{q}} \p_p \p_{\bar{q}} {1 \over K^{1/2}} \sum_{a=1}^{n-1} |\nabla_a (u-\underline{u})|^2 \\
&\geq& {1 \over 2 K^{1/2}} \sum_{a=1}^{n-1} \sigma_k^{p \bar{q}} h_{\bar{q} a} h_{\bar{a} p} - {1 \over K^{1/2}} \sum_{i=1}^n \sigma_k^{p \bar{q}} \p_p \p_{y^i} (u-\underline{u}) \p_{\bar{q}} \p_{y^i} (u-\underline{u})  + \mathcal{E}. \nonumber
\eea
We will now study the positive term
\[
\sum_{a=1}^{n-1} \sigma_k^{p \bar{q}} h_{\bar{q} a} h_{\bar{a} p}.
\]
At a point, we take new coordinates such that $\alpha_{\bar{k} j} = \delta_{kj}$ and $h_{\bar{k} j} = \lambda_j \delta_{kj}$. We express the frame $e_a = e^i{}_a \partial_i$ using these coordinates and also use these coordinates for the contracted indices $p,q$ above. The term becomes
\[
\sum_{a=1}^{n-1} \sum_{i=1}^n |e^i{}_a|^2 \sigma_{k-1}(\lambda|i) \lambda_i^2.
\]
Since the frame $e_a$ is unitary and coordinates $z^i$ are chosen such that the metric is the identity, we have
\[
\sum_{a=1}^n |e^i{}_a|^2 = 1, \ \ \sum_{i=1}^n |e^i{}_a|^2 = 1.
\]
We can therefore write
\[
\sum_{a=1}^{n-1} \sigma_k^{p \bar{q}} h_{\bar{q} a} h_{\bar{a} p} = \sum_{i=1}^{n} \sigma_{k-1}(\lambda|i) \lambda_i^2 (1- |e^i{}_n|^2).
\]
Since the $|e^i{}_n|^2$ sum to one, there exists an index $r$ such that $(1/n) \leq |e^i{}_r|^2 \leq 1$. It follows that all $i \neq r$ satisfy
\[
|e^i{}_n|^2 \leq 1 - {1 \over n}.
\]
Therefore
\[
\sum_{a=1}^{n-1} \sigma_k^{p \bar{q}} h_{\bar{q} a} h_{\bar{a} p} \geq {1 \over n}\sum_{i \neq r} \sigma_{k-1}(\lambda|i) \lambda_i^2.
\]
Going back to (\ref{grad-squared-ineq}), we conclude
\bea \label{quad-grad4}
& \ & \sigma_k^{p \bar{q}} \p_p \p_{\bar{q}} {1 \over K^{1/2}}\sum_{a=1}^{n-1} |\nabla_a (u-\underline{u})|^2 \\
&\geq& {1 \over 2 n K^{1/2}} \sum_{i \neq r} \sigma_{k-1}(\lambda|i) \lambda_i^2 - {1 \over K^{1/2}} \sum_{i=1}^n \sigma_k^{p \bar{q}} \p_p \p_{y^i} (u-\underline{u}) \p_{\bar{q}} \p_{y^i} (u-\underline{u})  + \mathcal{E}. \nonumber
\eea

\subsection{Final barrier}
For constants $A,B \gg 1$ to be determined, let
\bea
\Psi &=& A K^{1/2} v + B K^{1/2} |z|^2  - {1 \over K^{1/2}} \sum_{i=1}^n (\p_{y^i} (u-\underline{u}))^2 \nonumber\\
&&- {1 \over K^{1/2}} \sum_{a=1}^{n-1} |\nabla_a (u-\underline{u})|^2. \nonumber
\eea
Combining  (\ref{v-barrier}), (\ref{diff-3x}), (\ref{quad-grad0}), (\ref{quad-grad4}), and cancelling terms we obtain
\bea
& \ & \sigma_k^{p \bar{q}} \p_p \p_{\bar{q}} (\Psi + T_\alpha (u-\underline{u})) \nonumber\\
&\leq& - A \tau K^{1/2} (1+\F) + B K^{1/2} \sum_p \sigma_k^{p \bar{p}} - {1 \over 2nK^{1/2}} \sum_{i \neq r} \sigma_{k-1}(\lambda|i) \lambda_i^2 \nonumber\\
&& -{1 \over K^{1/2}} \sum_{i=1}^{n-1}\sigma_k^{p \bar{q}} \p_p \p_{y^i} (u-\underline{u}) \p_{\bar{q}} \p_{y^i} (u-\underline{u}) \nonumber\\
&&+C  (1 + K^{1/2}) \F + C \sum_i \sigma_{k-1}(\lambda|i) |\lambda_i| + C.
\eea
For $A \geq C B \tau^{-1} + A_0 \tau^{-1}$ with $A_0$ large enough, we then have
\bea
\sigma_k^{p \bar{q}} \p_p \p_{\bar{q}} (\Psi + T_\alpha (u-\underline{u})) &\leq& -{A_0 \over 2} K^{1/2} (1+\F) - {1 \over 2nK^{1/2}} \sum_{i \neq r} \sigma_{k-1}(\lambda|i) \lambda_i^2 \nonumber\\
&& + C \sum \sigma_{k-1}(\lambda|i)|\lambda_i|.
\eea
By applying (\ref{quad-split}) with $\epsilon ={1 \over 2n C K^{1/2}}$, we see that for $A_0 \gg 1$, then
\[
\sigma_k^{p \bar{q}} \p_p \p_{\bar{q}} (\Psi + T_\alpha(u-\underline{u})) \leq 0.
\]
Next, we look at the values of $\Psi + T_\alpha(u-\underline{u})$ on the boundary of $\Omega_\delta = \Omega \cap B_\delta(0)$. This boundary has two pieces.
\smallskip
\par On the piece $\p X \cap \Omega_\delta$, we have (\ref{bdd-y-grad}) and
\[
T_\alpha(u-\underline{u})=0, \ \ \nabla_a (u-\underline{u}) = 0.
\]
Therefore
\[
\Psi + T_\alpha(u-\underline{u}) \geq A K^{1/2} v + BK^{1/2} |z|^2 - C |z|^2 \geq 0,
\]
when $B$ is taken large enough.
\smallskip
\par On the piece $\p B_\delta \cap \Omega_\delta$, we have
\[
\Psi + T_\alpha(u-\underline{u}) = A K^{1/2} v + B K^{1/2} \delta^2 - CK^{1/2} \geq 0,
\]
for $B$ large.
\smallskip
\par It follows that
\[
\Psi + T_\alpha(u-\underline{u}) \geq 0 \ {\rm on} \ \partial \Omega_\delta.
\]
By the maximum principle,
\[
\Psi + T_\alpha(u-\underline{u}) \geq 0 \ {\rm on} \ \Omega_\delta, \ \ [\Psi + T_\alpha(u-\underline{u})] (0)= 0.
\]
Therefore
\[
\p_{x^n} [\Psi + T_\alpha(u-\underline{u})](0) \geq 0.
\]
It follows that
\[
0 \leq A K^{1/2} \p_{x^n} v(0) - (\p_{x^n} {\rho_{t^\alpha} \over \rho_{x^n}})(0) \p_{x^n} (u-\underline{u})(0) + \p_{x^n} \p_{t^\alpha} (u - \underline{u})(0) .
\]
Since $|\p_{x^n} v| \leq C$ on $\p X$ by the boundary gradient estimate (\ref{bdd-grad}), we conclude
\[
\p_{x^n} \p_{t^\alpha} u (0) \geq -CK^{1/2}.
\]
We can apply the same argument to $\Psi - T_\alpha(u-\underline{u})$. It follows that
\[
|\p_{x^n} \p_{t^\alpha} u|(0) \leq CK^{1/2}.
\]
Let $i \in \{1, \dots, n-1 \}$. Since
\[
u_{\bar{n} i} = {1 \over 4} \left( {\partial \over \partial x^n} + \sqrt{-1} {\partial \over \partial y^n} \right) \left( {\partial \over \partial t^{n+\alpha}} + \sqrt{-1} {\partial \over \partial t^\alpha} \right) u
\]
for some $\alpha \in \{1 , \dots, n-1\}$, it follows that
\[
|h_{\bar{n} i}|(0) \leq C K^{1/2},
\]
which is the mixed normal-tangential estimate that we will need in the following section.

\section{Boundary Double Normal Estimate}
\par Let $p \in \p X$ be a boundary point, with coordinates $z=(z^1, \cdots, z^n)$, $z^i = x^i + \sqrt{-1} y^i$ such that $p$ corresponds to the origin. Take $z$ to be coordinates centered at the origin such that $\alpha_{\bar{k} j}(0)= \delta_{kj}$, and rotate them such that ${\partial \over \partial x^n}$ is the inner normal vector at $p$. It remains to estimate
\[
|u_{\bar{n} n}| = \left|{\p \over \partial z^n} {\partial \over \partial \bar{z}^n} u \right|.
\]
We may further rotate coordinates in the tangential directions, such that at $p$ the matrix $h_{\bar{k} j} = \chi_{\bar{k} j} + u_{\bar{k} j}$ is of the form
\be \label{h-bdd-coords}
h = \begin{bmatrix}
h_{\bar{n} n} & h_{\bar{n} 1} & h_{\bar{n} 2} & \cdots & h_{\bar{n} n-1}\\
h_{\bar{1} n} & \lambda'_1 & 0 & \cdots&0\\
h_{\bar{2} n} & 0 & \lambda'_2 &\cdots& 0\\
\vdots & \vdots& \ddots& \vdots&\vdots\\
h_{\overline{n-1} n} &0& 0 & \cdots & \lambda'_{n-1}
\end{bmatrix}.
\ee
Since the eigenvalues of $h$ are in the $\Gamma_{k}$ cone, we know that
\[
h_{\bar{n} n} + \sum_{i=1}^{n-1} \lambda'_i \geq 0.
\]
It follows from the double-tangential estimate that
\[
h_{\bar{n} n} \geq -C.
\]
It remains to estimate $h_{\bar{n} n}$ from above to obtain the estimate on $|u_{\bar{n} n}|$. The main estimate for this purpose will be the following.

\begin{thm} \label{thm-lower-bdd}
Let $\lambda' \in \mathbb{R}^{n-1}$ be the eigenvalues of the endomorphism $h$ restricted to the subbundle $T^{1,0} \p X$. Then
\[
\sigma_{k-1}(\lambda') \geq \kappa_0 >0 ,
\]
where $\kappa_0>0$ depends on $(X,\alpha)$, $\chi$, $\psi$, $\underline{u}$.
\end{thm}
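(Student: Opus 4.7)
The plan is to argue by contradiction via a boundary maximum principle, using the subsolution to supply the strict positivity.

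First, I would observe that the tangential matrix $h'$ is uniformly bounded in operator norm. In the adapted coordinates and frame of (\ref{h-bdd-coords}), combining (\ref{bdd-tangent}) with the complex Hessian formula yields, for $a,b\in\{1,\dots,n-1\}$,
\[
h_{\bar{b} a}(0) \;=\; \underline{h}_{\bar{b} a}(0) \;-\; s_0\,\rho_{\bar{b} a}(0), \qquad s_0 \;:=\; \partial_{x^n}(u-\underline{u})(0)\in[0,C],
\]
where $s_0\leq C$ is the boundary $C^1$ estimate (\ref{bdd-grad}). Since $\lambda(h)\in\Gamma_k$, Schur-Horn interlacing (cf. (\ref{schur-horn-ineq})) yields $\lambda'\in\overline{\Gamma_{k-1}}$, so the content of the theorem is that $\lambda'$ stays uniformly away from $\partial\Gamma_{k-1}$. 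The subsolution provides the anchor: smoothness of $\underline u$ with $\sigma_k(\underline\lambda)\ge \psi \ge c>0$ places $\underline{\lambda}(x)$ in a fixed compact subset of the open cone $\Gamma_k$ for every $x\in \partial X$, hence interlacing gives $\sigma_{k-1}(\underline{\lambda}'(x)) \geq c_1 > 0$ uniformly on $\partial X$.

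Now suppose, for contradiction, that $m_0 := \min_{\partial X}\sigma_{k-1}(\lambda')$ can be made arbitrarily small, and let $x_0 \in \partial X$ be a minimizer. On a neighborhood $\Omega_\delta$ of $x_0$, I would consider a test function of the form
\[
\Phi \;=\; -\log \sigma_{k-1}(h') \;+\; A\,v \;+\; B(u - \underline{u}),
\]
where $v$ is the Guan barrier (\ref{v-defn}) and $A,B>0$ are to be chosen. Since $v\geq 0$ with $v(x_0)=0$ and $u\equiv \underline{u}$ on $\partial X$, one has $\Phi(x_0)=-\log m_0$. Tuning $A,B$ so that $\sigma_k^{p\bar q}\partial_p\partial_{\bar q}\Phi \leq 0$ in $\Omega_\delta$, using (\ref{v-barrier}) together with the inequalities (\ref{quad-split}), (\ref{newton-mac}) to absorb the error terms produced when the $\partial\partial$ falls on $\log\sigma_{k-1}(h')$, the maximum principle forces $\Phi$ to attain its maximum over $\overline{\Omega_\delta}$ at $x_0$, hence $\partial_{x^n}\Phi(x_0)\leq 0$.

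The crux is then to evaluate this inward-normal inequality. The derivative $\partial_{x^n}\log\sigma_{k-1}(h')$ at $x_0$ is expressed through the mixed normal-tangential second derivatives $\partial_{x^n}h_{\bar b a}$, which involve third derivatives of $u$ that are controlled by differentiating the equation as in (\ref{triple-d}) and by invoking the mixed estimate $|h_{\bar n a}|\leq CK^{1/2}$ from the previous section. Combining this with the concave form $\sigma_k^{1/k}(h)\geq \sigma_k^{1/k}(\underline{h}-\tau\mathbf{1})$ of the subsolution inequality, the matrix identity $\sigma_k(\lambda) = h_{\bar n n}\sigma_{k-1}(\lambda') + \sigma_k(\lambda') - (\text{mixed})$, and the normal derivative bound $\partial_{x^n}(u-\underline{u})\leq C$ on $\partial X$, should pin down a strict lower bound $\sigma_{k-1}(\lambda'(x_0))\geq \kappa_0$ depending only on data. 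The main obstacle is calibrating $A$ and $B$ so that the Cauchy-Schwarz absorptions of the mixed terms do not swamp the positivity extracted from the subsolution: the cofactors $\sigma_{k-2}(\lambda'|a)$ can blow up relative to $\sigma_{k-1}(\lambda')$ precisely when $\lambda'$ approaches $\partial\Gamma_{k-1}$, which is exactly the regime in which the contradiction argument is run. This is the CNS-style technical core alluded to in the introduction.
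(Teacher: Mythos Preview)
Your proposal has a genuine gap, and it diverges from the Caffarelli--Nirenberg--Spruck technique that the paper actually uses, despite your closing remark.

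The central problem is the differential inequality $\sigma_k^{p\bar q}\partial_p\partial_{\bar q}\Phi \le 0$ for your test function $\Phi = -\log\sigma_{k-1}(h') + Av + B(u-\underline u)$. First, $h'$ is the restriction of $h$ to $T^{1,0}\partial X$, so it is only defined on $\partial X$; you must extend it to $\Omega_\delta$ before $\Phi$ even makes sense there. More seriously, once extended (say via a frame tangent to level sets of $\rho$), applying the linearized operator $L=\sigma_k^{p\bar q}\partial_p\partial_{\bar q}$ to $\log\sigma_{k-1}(h')$ produces terms of the form $G^{a\bar b}\,\partial_p\partial_{\bar q}h'_{\bar b a}$, which are fourth order in $u$. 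The standard concavity trick for absorbing fourth derivatives works when the outer operator and the differentiated equation share the same coefficient matrix $\sigma_k^{p\bar q}$; here the inner weights $G^{a\bar b}=\sigma_{k-1}^{-1}\partial\sigma_{k-1}/\partial h'_{\bar b a}$ act on the tangential $(n-1)\times(n-1)$ block and bear no relation to $\sigma_k^{p\bar q}(h)$, so there is no cancellation. Your reference to (\ref{quad-split}) and (\ref{newton-mac}) does not address this. Likewise, in the normal-derivative step you need pointwise control of $\partial_{x^n}h'_{\bar b a}$, i.e.\ individual third derivatives $\partial_{x^n}\partial_a\partial_{\bar b}u$; identity (\ref{triple-d}) only bounds the contracted quantity $\sigma_k^{p\bar q}\partial_p\partial_{\bar q}\partial_\beta u$ and gives no such control. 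You yourself flag the blow-up of the cofactors $\sigma_{k-2}(\lambda'|a)/\sigma_{k-1}(\lambda')$ near $\partial\Gamma_{k-1}$ as the obstacle, and indeed it is unresolved.

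The paper's proof sidesteps all of this by never applying the linearized operator to anything second order in $u$. Instead it introduces the one-parameter family $A_t = t\,\underline h'(0) - \eta\,\rho'(0)$ and lets $t_0<1$ be the first time $\lambda(A_{t_0})\in\partial\Gamma_{k-1}$; the theorem reduces to a uniform upper bound $t_0\le 1-\kappa$. This is obtained by constructing an explicit barrier $\Psi(z)=\underline u - (\eta/t_0)\rho + (\ell\cdot z)\rho + MD^2 + \epsilon(|z|^2 - C_0^{-1}x^n)$, built only from background data and \emph{first} derivatives of $u$ at $0$, and showing that $\chi+\sqrt{-1}\partial\bar\partial\Psi$ has eigenvalues outside $\Gamma_k^\sigma$. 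The key device is the supporting hyperplane $\{\sum\nu_a\lambda_a=0\}$ to $\Gamma_{k-1}$ at $\lambda(A_{t_0})$: one checks $\Lambda_\nu(\chi+\sqrt{-1}\partial\bar\partial w)\le 0$, which via Birkhoff--von~Neumann forces $(\mu_2,\dots,\mu_n)\notin\Gamma_{k-1}$ and hence $\mu\notin\Gamma_k$. The comparison Lemma~\ref{lem-max-princ} then gives $u\le\Psi$ on $\Omega_\delta$, and evaluating $\partial_{x^n}$ at $0$ yields the bound on $t_0$. No fourth derivatives, no linearized operator on $\log\sigma_{k-1}(h')$, and no blow-up of cofactors.
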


Assuming this, we may prove the upper bound for $h_{\bar{n} n}$. Suppose $p \in \p X$ is a point where $h_{\bar{n} n} \geq 0$ with coordinates chosen as in (\ref{h-bdd-coords}). Since $\sigma_k(h)$ is defined as the coefficient of $\theta^k$ in the expansion $\det(I_n + \theta h)$, we have
\[
\psi = \sigma_k(h) = h_{\bar{n} n} \sigma_{k-1}(\lambda') - \sum_i |h_{\bar{n} i}|^2 \sigma_{k-2} (\lambda'|i) + \sigma_k(\lambda').
\]
Applying Theorem \ref{thm-lower-bdd}, 
\[
h_{\bar{n} n} \leq \kappa_0^{-1} (\psi + \sum_i |h_{\bar{n} i}|^2 \sigma_{k-2} (\lambda'|i) - \sigma_k(\lambda')).
\]
Our tangential and mixed normal-tangential (\ref{mixed-estimate}) estimates give $|\lambda'|\leq C$ and $|h_{\bar{n} i}| \leq C K^{1/2}$. Therefore
\[
h_{\bar{n} n} \leq CK.
\]
This gives the boundary $C^2$ estimate. Combining this with the estimate (\ref{C2-interior}) in the interior (due to \cite{HMW,Sz}), it follows that

\begin{prop}\label{prop: c2-est}
In the setting of Theorem~\ref{thm-c2-uniform}, there is a constant $C$, depending only on $(X,\alpha), \underline{u}, \phi, |\psi|_{C^{2}(X)}, \inf_{X} \psi$, and $\chi$ so that
\be \label{c2-est}
\sup_X \| \ddb u \|_{(X,\alpha)} \leq CK.
\ee
\end{prop}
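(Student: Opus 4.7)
The proof is essentially the assembly of all three boundary second-order estimates together with the interior Hou–Ma–Wu/Sz\'ekelyhidi estimate of Proposition~\ref{prop-c2-est-HMW}. I would organize it as follows.

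First, I will reduce the global estimate to a boundary estimate. By Proposition~\ref{prop-c2-est-HMW} we already have
\[
\|\ddb u\|_{L^\infty(X,\alpha)} \le C\bigl(1+\sup_{\partial X}|\ddb u|+\sup_X|\nabla u|^2\bigr),
\]
so it suffices to prove $\sup_{\partial X}\|\ddb u\|_{(X,\alpha)}\le CK$. Since $\|\nabla u\|^2_{L^\infty}\le K$, the interior term is already of the right form, and everything comes down to bounding $|h_{\bar k j}|$ at an arbitrary boundary point $p\in\partial X$ by $CK$.

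Second, I would fix $p\in\partial X$ and choose coordinates as in the last section: $\alpha_{\bar kj}(0)=\delta_{kj}$, $\partial/\partial x^n$ the inner normal, and tangential coordinates rotated so that $h(0)$ takes the block form \eqref{h-bdd-coords}. Then I split the entries of $h$ into three groups and bound each:
\begin{enumerate}
\item[(i)] \emph{Pure tangential directions.} Since $u=\underline u$ on $\partial X$, the relation \eqref{bdd-tangent} together with the boundary gradient estimate \eqref{bdd-grad} from Lemma~\ref{lem:c0-c1-bdry} gives $|\partial_{t^\alpha}\partial_{t^\beta}u|(0)\le C$, hence $|\lambda'_i|\le C$ for the tangential eigenvalues.
\item[(ii)] \emph{Mixed normal–tangential directions.} This is precisely \eqref{mixed-estimate}: $|h_{\bar n i}|(0)\le CK^{1/2}$ for $i=1,\dots,n-1$.
\item[(iii)] \emph{Double normal direction.} Since $\lambda\in\Gamma_k\subset\Gamma_1$, one has $h_{\bar nn}+\sum_i\lambda_i'\ge 0$, so the tangential bound in (i) gives $h_{\bar nn}(0)\ge -C$. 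For the upper bound, I use the expansion of $\sigma_k(h)$ along the last row and column: expanding $\det(I_n+\theta h)$ in the coordinates \eqref{h-bdd-coords} yields
\[
\psi(p)=\sigma_k(h)=h_{\bar nn}\,\sigma_{k-1}(\lambda')-\sum_{i=1}^{n-1}|h_{\bar n i}|^2\,\sigma_{k-2}(\lambda'|i)+\sigma_k(\lambda').
\]
By Theorem~\ref{thm-lower-bdd}, $\sigma_{k-1}(\lambda')\ge\kappa_0>0$, and by (i), $|\sigma_{k-2}(\lambda'|i)|+|\sigma_k(\lambda')|\le C$. Solving for $h_{\bar nn}$ and plugging in (ii) gives
\[
h_{\bar nn}(0)\le\kappa_0^{-1}\Bigl(\psi+C\sum_i|h_{\bar ni}|^2+C\Bigr)\le CK.
\]
\end{enumerate}

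Finally, combining (i)–(iii) gives $|h_{\bar kj}|(0)\le CK$ in the chosen frame; since $h$ is hermitian and the frame is unitary this yields $\sup_{\partial X}\|\ddb u\|_{(X,\alpha)}\le CK$. Plugging this bound back into \eqref{C2-interior} gives
\[
\sup_X\|\ddb u\|_{(X,\alpha)}\le C(1+CK+K)\le CK,
\]
which is \eqref{c2-est}. The only nontrivial obstacle in this assembly is handling the double normal component, but both ingredients needed for it (the lower bound on $\sigma_{k-1}(\lambda')$ from Theorem~\ref{thm-lower-bdd} and the sharp $K^{1/2}$ mixed estimate \eqref{mixed-estimate}) are now in hand; note that the powers conspire correctly, as $(K^{1/2})^2=K$, which is exactly why the $K^{1/2}$ scale in the mixed estimate was essential.
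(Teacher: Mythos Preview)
Your proposal is correct and follows exactly the paper's own assembly: reduce to the boundary via Proposition~\ref{prop-c2-est-HMW}, handle the tangential entries by~\eqref{bdd-tangent} and Lemma~\ref{lem:c0-c1-bdry}, the mixed entries by~\eqref{mixed-estimate}, and the double normal by the cofactor expansion of $\sigma_k(h)$ together with Theorem~\ref{thm-lower-bdd}. The only cosmetic difference is that the paper explicitly restricts the upper-bound computation for $h_{\bar n n}$ to points where $h_{\bar n n}\geq 0$ before dividing by $\sigma_{k-1}(\lambda')\geq\kappa_0$; since you already have $h_{\bar n n}\geq -C$ from the $\Gamma_1$ inclusion, this is harmless.
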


\bigskip
\par \noindent {\it Proof of Theorem \ref{thm-lower-bdd}:} By Proposition~\ref{prop-c2-est-HMW} it suffices to prove~\eqref{c2-est} on $\del X$.  From now on, we assume $k \geq 2$, since the case $k=1$ is trivial. Let $0 \in \p X$ be a point with  coordinates such that the metric $\alpha_{\bar{k} j}(0) = \delta_{kj}$. Let $\Omega_\delta = X \cap B_\delta(0)$ for $\delta>0$ to be chosen later, and $B_\delta(0)$ the ball of radius $\delta$ at the origin. Orthogonally rotate coordinates such that $T_0^{1,0}(\p X)$ is spanned by ${\partial \over \partial z^\alpha}$ for $\alpha \in \{1, \dots , n-1 \}$, and $x^n$ is in the direction of the inner normal vector at the origin.
\smallskip
\par We will use Greek indices $\alpha,\beta \in \{1, \dots, n-1 \}$ for tangential directions. We will also use $\lambda'= (\lambda'_1, \dots,\lambda'_{n-1})$ for the eigenvalues of $(\chi_{\bar{\alpha} \beta} + u_{\bar{\alpha} \beta})(0)$ and $\underline{\lambda'} = (\underline{\lambda'_1}, \dots, \underline{\lambda'_{n-1}})$ for the eigenvalues of $(\chi_{\bar{\alpha} \beta} + \underline{u}_{\bar{\alpha} \beta})(0)$. We note that
\[
\lambda',\underline{\lambda'} \in \Gamma_{k-1} \subset \mathbb{R}^{n-1}.
\]
This is because the eigenvalues $\lambda$ of $h_{\bar{k} j}(0)$ are in the $\Gamma_k$ cone, and hence by the Schur-Horn theorem \cite{Horn}, we also have
\be \label{schur-horn}
(h_{\bar{n} n}(0),\lambda'_1, \dots,\lambda'_{n-1}) \in \Gamma_k, \ \ \sigma_k(h_{\bar{n} n},\lambda') \geq \sigma_k(\lambda).
\ee
It follows from (\ref{ellipticity}) that
\[
( \lambda'_1, \cdots, \lambda'_{n-1} ) \in \Gamma_{k-1},
\]
and a similar argument shows $\underline{\lambda'} \in \Gamma_{k-1}$.
\smallskip
\par Let $\rho \leq 0$ be as before the defining function (\ref{defining-function}) of the boundary in these coordinates, expressed as
\be \label{rho-expression}
\rho = - x^n + O(|z|^2).
\ee
Since $u=\underline{u}$ on $\p X$, as noted previously (\ref{bdd-tangent}), we have
\be \label{key-bdd-relation}
(\chi_{\bar{\alpha} \beta} + u_{\bar{\alpha} \beta})(0) = (\chi_{\bar{\alpha} \beta}+ \underline{u}_{\bar{\alpha} \beta})(0) - (u-\underline{u})_{x^n}(0) \rho_{\bar{\alpha} \beta}(0).
\ee
Since $u \geq \underline{u}$ and $u=\underline{u}$ on $\p X$, we know that
\[
- (u-\underline{u})_{x^n}(0) \leq 0.
\]
Suppose first that $(u-\underline{u})_{x^n}(0) = 0$. Then at the origin, we have $\lambda' = \underline{\lambda'}$, and hence
\[
\sigma_{k-1}(\lambda') = \sigma_{k-1}(\underline{\lambda'}).
\]
If $\underline{\lambda}$ are the eigenvalues of $(\chi + \ddb \underline{u}) (0)$, then using the estimate (\ref{sigma11-lower-bdd}) and (\ref{schur-horn}), we see that
\be \label{subsoln-lower-bdd}
\sigma_{k-1}(\underline{\lambda'}) \geq {k \over n} {\psi \over \underline{\lambda_1}} > 0,
\ee
which proves Theorem \ref{thm-lower-bdd}. We may thus assume
\[
\eta :=  (u-\underline{u})_{x^n}(0) > 0.
\]
We note that
\be \label{eta-bound}
\eta \leq | \nabla (u-\underline{u})|(0) \leq C,
\ee
since $|\nabla u|$ is bounded on $\p X$ as noted in Lemma~\ref{lem:c0-c1-bdry}, see e.g. (\ref{bdd-grad}).
\smallskip
\par With this setup in place, we now follow the technique developed by Caffarelli-Nirenberg-Spruck \cite{CNS3}. Since we are dealing with the complex Hessian, we will also use some ideas and notation of S.-Y. Li \cite{LiSY}.
\smallskip
\par For a real parameter $t$, consider the family of $(n-1) \times (n-1)$ matrices
\be \label{A_t-defn}
A_t = t (\chi_{\bar{\alpha} \beta} + \underline{u}_{\bar{\alpha} \beta})(0) - \eta \rho_{\bar{\alpha} \beta}(0).
\ee
Since $A_1 = (\chi_{\bar{\alpha} \beta} + u_{\bar{\alpha} \beta})(0)$, we know that at $t=1$ we have
\[
\lambda(A_1) \in \Gamma_{k-1} \subset \mathbb{R}^{n-1}.
\]
For $t \rightarrow - \infty$, we have $\lambda(A_t) \notin \Gamma_{k-1}$. Let $t_0 < 1$ denote the first time when the eigenvalues hit the boundary of the cone;
\[
\lambda(A_{t_0}) \in \partial \Gamma_{k-1}.
\]
Then by definition, there is an $\ell \in \{1, \dots, k-1\}$ such that we have
\be
\sigma_\ell(A_{t_0})=0.
\ee
By the Newton-Maclaurin inequality $({\sigma_r \over C^n_k})^{1/r} \leq ({\sigma_s \over C^n_s})^{1/s}$ for $s \leq r$, we conclude 
\[
\sigma_{k-1}(A_{t_0}) = 0.
\]
Our goal is to separate $t_0$ away from $t=1$ and obtain the estimate
\be \label{t-bdd}
t_0 \leq 1- \kappa,
\ee
for a uniform $\kappa>0$.
\smallskip
\par We now explain why this will imply the lower bound of Theorem \ref{thm-lower-bdd}. By (\ref{key-bdd-relation}), we can write
\[
(\chi_{\bar{\alpha} \beta} + u_{\bar{\alpha} \beta})(0) = (1-t_0) (\chi_{\bar{\alpha} \beta}+ \underline{u}_{\bar{\alpha} \beta})(0) + A_{t_0}.
\]
By concavity \cite{CNS3} and homogeneity of $\sigma_{k-1}^{1/(k-1)}$ on the space of matrices with eigenvalues in $\Gamma_{k-1}$, at the point $0$ we have
\bea
\sigma_{k-1}^{1/(k-1)}[\chi_{\bar{\alpha} \beta} + u_{\bar{\alpha} \beta}] &\geq& (1-t_0)\sigma_{k-1}^{1/(k-1)}[\chi_{\bar{\alpha} \beta}+ \underline{u}_{\bar{\alpha} \beta}] + \sigma_{k-1}^{1/(k-1)} [ A_{t_0} ] \nonumber\\ 
&=& (1-t_0)\sigma_{k-1}^{1/(k-1)}[\chi_{\bar{\alpha} \beta}+ \underline{u}_{\bar{\alpha} \beta}]. \nonumber
\eea
Thus (\ref{t-bdd}) implies
\[
\sigma_{k-1}^{1/(k-1)}[\chi_{\bar{\alpha} \beta} + u_{\bar{\alpha} \beta}] \geq \kappa \sigma_{k-1}^{1/(k-1)}[\chi_{\bar{\alpha} \beta}+ \underline{u}_{\bar{\alpha} \beta}] . 
\]
By the lower bound for the subsolution (\ref{subsoln-lower-bdd}), it follows that
\[
\sigma_{k-1}^{1/(k-1)}[\chi_{\bar{\alpha} \beta} + u_{\bar{\alpha} \beta}] \geq \kappa_0 >0,
\]
proving Theorem \ref{thm-lower-bdd}.
\bigskip
\par To prove (\ref{t-bdd}), we will use the following auxillary functions of Caffarelli-Nirenberg-Spruck \cite{CNS3}, defined on $\Omega_\delta$:
\[
D(z) = - \rho(z) + \tau |z|^2 \geq 0,
\]
\[
w (z) = \underline{u}(z) - {\eta \over t_0} \rho(z) + (\ell_i z^i + \ell_{\bar{i}} \bar{z}^i) \rho(z) + M D(z)^2 
\]
\[
\Psi(z) = w(z) + \epsilon (|z|^2 - {1 \over C_0} x^n).
\]
The parameters will be chosen as follows: $\tau,\epsilon>0$ will be small constants, $M,C_0>1$ will be large constants, $\ell_i \in \mathbb{C}$ and $\ell_{\bar{i}} = \overline{\ell_i}$. The goal is

\begin{lem} \label{lem-nn-barrier}
Suppose $t_0 \geq 1/2$. There exists parameters $\delta,\tau,\epsilon,M,C_0,\ell_i$ depending only on $(X,\alpha)$, $\chi$, $\inf_{X}\psi$, $\underline{u}$, such that
\[
u(z) \leq \Psi(z)
\]
on $\Omega_\delta$.
\end{lem}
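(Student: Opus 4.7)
The plan is to apply the comparison principle (Lemma~\ref{lem-max-princ}) with $v = \Psi$, which reduces the lemma to verifying two conditions: (i) $u \leq \Psi$ on $\partial \Omega_\delta$, and (ii) at every interior point of $\Omega_\delta$, the eigenvalues of $\alpha^{-1}(\chi + \ddb \Psi)$ lie outside the set $\{\lambda \in \Gamma_k : \sigma_k(\lambda) \geq \psi\}$. The parameters are selected in the order $\tau, C_0, M, \delta, \epsilon$, with $\ell_i$ determined by a pointwise computation at the origin.

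For the boundary comparison, $\partial \Omega_\delta = (\partial X \cap \overline{B_\delta}) \cup (\overline{X} \cap \partial B_\delta)$. On the flat piece $\partial X \cap \overline{B_\delta}$ we have $\rho = 0$ and $u = \underline{u}$, so
\[
\Psi - u \;=\; M \tau^2 |z|^4 + \epsilon\Bigl(|z|^2 - \frac{x^n}{C_0}\Bigr);
\]
the relation $\rho = -x^n + O(|z|^2)$ forces $x^n = O(|z|^2)$ on $\partial X$, so this is nonnegative for $C_0$ large. On the spherical piece, all terms in $\Psi - u$ other than $M D^2$ are controlled by $C(1+\delta)$ using Lemma~\ref{lem:c0-c1-bdry}, while $D \geq \tau \delta^2$ yields $M D^2 \geq M \tau^2 \delta^4$, dominating once $M$ is large in terms of $\delta, \tau$.

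The core step is (ii). At $z = 0$ one has $\rho(0) = 0$, $\partial \rho(0) = -\tfrac{1}{2} dz^n$, and $\ddb(D^2)(0) = 2 \partial D \otimes \bar{\partial} D\big|_0 = \tfrac{1}{2} dz^n \otimes d\bar{z}^n$. Substituting, the tangential $(n-1) \times (n-1)$ block of $(\chi + \ddb w)(0)$ equals $\tfrac{1}{t_0} A_{t_0}$ by the definition~\eqref{A_t-defn}; the mixed normal-tangential entries
\[
(\chi_{\bar\alpha n} + \underline{u}_{\bar\alpha n})(0) - \tfrac{\eta}{t_0}\rho_{\bar\alpha n}(0) - \tfrac{1}{2}\ell_{\bar\alpha}
\]
can be annihilated by setting $\ell_{\bar\alpha} = 2[(\chi_{\bar\alpha n} + \underline{u}_{\bar\alpha n})(0) - \tfrac{\eta}{t_0}\rho_{\bar\alpha n}(0)]$, which is bounded by the data since $\eta/t_0 \leq 2\eta \leq C$; and the double-normal entry becomes $\tfrac{M}{2} + O(1)$. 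Thus $(\chi + \ddb w)(0)$ is block-diagonal, with tangential eigenvalues $\lambda'$ satisfying $\sigma_{k-1}(\lambda') = 0$ and normal eigenvalue $\mu_0 \approx M/2$. Applying the contrapositive of~\eqref{ellipticity} rules out $(\chi + \ddb w)(0)$ lying in the open cone $\Gamma_k$, and a direct expansion gives $\sigma_k(\chi + \ddb w)(0) = \mu_0 \sigma_{k-1}(\lambda') + \sigma_k(\lambda') = \sigma_k(\lambda')$, bounded independent of $M$.

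The final, and hardest, step is to propagate this from the origin to all of $\Omega_\delta$. Since $x^n$ is pluriharmonic, the perturbation contributes $\epsilon \cdot \mathrm{Id}$ to $\ddb \Psi$, shifting each eigenvalue by $\epsilon$; and at a general point of $\Omega_\delta$, $\chi + \ddb w$ differs from its value at the origin by at most $O(M|z|)$, coming from the $2MD\,\ddb D$ term in $\ddb(MD^2)$ (all other Hessian contributions are $M$-independent and $O(|z|)$). Choosing $\delta \ll 1/M$ keeps the tangential block close to $\tfrac{1}{t_0}A_{t_0}$ and the normal eigenvalue close to $M/2$; then choosing $\epsilon$ small last, so that $M\epsilon \ll \inf_X \psi$, guarantees at every point of $\Omega_\delta$ that either $\chi + \ddb \Psi \notin \Gamma_k$ or $\sigma_k(\chi + \ddb \Psi) < \psi$. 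The main obstacle is this parameter juggling: $M$ must be large (for the boundary comparison on the sphere and to dominate the normal direction), $M\delta$ must be small (so the near-block-diagonal structure survives away from the origin), and $\epsilon$ must be chosen last, small but positive, with all constants depending only on $(X,\alpha), \chi, \inf_X \psi, \underline{u}$.
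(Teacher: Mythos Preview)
There is a genuine gap: your parameter scheme is internally inconsistent. On the spherical piece $\partial B_\delta \cap X$ you need $M\tau^2\delta^4 \geq C$ (since $\underline{u}-u$ contributes a constant $-C$ independent of $\delta$), so $M \gtrsim \delta^{-4}$. But your interior argument requires the tangential block of $\chi+\ddb w$ to remain within $O(M\delta)$ of $\tfrac{1}{t_0}A_{t_0}$; since the normal eigenvalue is $\approx M/2$, this forces $\sigma_k(\chi+\ddb\Psi) = \tfrac{M}{2}\sigma_{k-1}(\text{tangential}) + O(1) = O(M\epsilon + M^2\delta)$, and you need $M^2\delta \ll 1$, i.e.\ $M \ll \delta^{-1/2}$. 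These two requirements on $M$ cannot be satisfied simultaneously. (Your stated ordering ``$\tau,C_0,M,\delta,\epsilon$'' together with ``$M$ large in terms of $\delta$'' already signals the circularity.)

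The paper resolves this by abandoning the perturbation-from-the-origin idea entirely. The key construction is an operator $\Lambda_\nu\beta = \sum_{a=1}^{n-1}\nu_a\beta(\xi_a,\overline{\xi_a})$, where $\nu$ is the normal to the supporting hyperplane of $\Gamma_{k-1}$ at $\lambda(A_{t_0})$ and the frame $\{\xi_a\}$ is tangent to the level sets of $D$. Because $\xi_a(D)=0$, the large term $2M\,\partial D\wedge\bar\partial D$ is killed by $\Lambda_\nu$, and the remaining $2MD\,\ddb D$ contributes $-\tfrac{M\tau a_1}{2}|z|^2$ with a \emph{negative} sign (using $\Lambda_\nu\ddb D \leq -a_1/4$). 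Thus increasing $M$ \emph{improves} the interior inequality $\Lambda_\nu(\chi+\ddb w)\leq 0$ rather than damaging it; the role of $\ell_i$ here is to cancel the linear-in-$z$ terms of $\Lambda_\nu(\chi+\ddb w)$, not to make the Hessian block-diagonal at the origin. A Birkhoff--von Neumann/Schur--Horn argument then converts $\Lambda_\nu\leq 0$ into the statement that the eigenvalues of $\chi+\ddb w$ lie outside $\Gamma_k$ at \emph{every} point of $\Omega_\delta$, after which $\epsilon$ can be chosen last, small relative to the (now fixed) bound on those eigenvalues.
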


Given this, we prove the $t_0$ estimate (\ref{t-bdd}) and hence Theorem \ref{thm-lower-bdd}. Since $\Psi(0)=\underline{u}(0)=u(0)$, we have
\[
\p_{x^n} \Psi(0) \geq \p_{x^n} u(0),
\]
Using the definition of $\Psi$, this gives
\[
 -(u-\underline{u})_{x^n}(0) \geq {\epsilon \over C_0} + {\eta \over t_0} \p_{x^n} \rho (0). 
\]
Since $\eta = (u-\underline{u})_{x^n}(0)$ and $\p_{x^n} \rho(0) = -1$, this inequality can be rearraged to
\[
{1 \over t_0} \geq 1 + {\epsilon \over C_0 \eta}.
\]
Hence,
\[
t_0  \leq {1 \over 1+ \epsilon \eta^{-1} C_0^{-1}} := 1-\kappa,
\]
for uniform
\[
\kappa ={ \epsilon C_0^{-1} \over \eta + \epsilon C_0^{-1}} > 0,
\]
which proves (\ref{t-bdd}). Recall that $0< \eta \leq C$ is bounded above by Lemma~\ref{lem:c0-c1-bdry}, as noted following (\ref{eta-bound}).

\bigskip
\par \noindent {\it Proof of Lemma \ref{lem-nn-barrier}:} We introduce some more notation before starting the estimate. We may perform an orthogonal change of coordinates in the tangential directions at the origin to arrange that $A_{t_0}$ (\ref{A_t-defn}) is diagonal. Let $\tilde{\lambda} = (\tilde{\lambda}_1, \dots, \tilde{\lambda}_{n-1})$ denote the eigenvalues of $A_{t_0}$, arranged as usual so that
\[
\tilde{\lambda}_1 \geq \dots \geq \tilde{\lambda}_{n-1}.
\]
Since $\tilde{\lambda} \in \p \Gamma_{k-1}$, we take $\ell \in \{1, \dots, k-1 \}$ to be the smallest integer such that $\sigma_\ell(\tilde{\lambda})=0$. We have the equation
\[
\sum_{i=1}^{n-1} \sigma_{\ell-1}(\tilde{\lambda}|i) \tilde{\lambda}_i = \ell \sigma_{\ell}(\tilde{\lambda}) = 0.
\]
The supporting hyperplane of the cone $\Gamma_{\ell}$ at the point $\tilde{\lambda} \in \p \Gamma_{\ell}$ has normal vector proportional to
\[
(\sigma_{\ell-1}(\tilde{\lambda}|1), \dots , \sigma_{\ell-1}(\tilde{\lambda}|n-1)) \in \mathbb{R}^{n-1}.
\]
Note that $\sigma_{\ell-1}(\tilde{\lambda}|i) \ne 0$ for some $1 \leq i \leq n-1$ for otherwise $\sigma_{\ell-1}(\tilde{\lambda})=0$ contradicting our choice of $\ell$. Denote
\[
\nu_i = {1 \over \sum \sigma_{\ell-1}(\tilde{\lambda}|i)} \sigma_{\ell-1}(\tilde{\lambda}|i).
\]
Since $\Gamma_\ell$ is a convex set, we have
\be \label{hyperplane}
\Gamma_{\ell} \subset \{ \lambda \in \mathbb{R}^{n-1} : \sum_\alpha \nu_\alpha \lambda_\alpha > 0 \}.
\ee
We also note
\[
\sum_\alpha \nu_\alpha = 1, \ \ \nu_{n-1} \geq \dots \geq \nu_1 \geq 0, \quad \text{and} \quad \sum_{\alpha=1}^{n-1} \nu_\alpha \tilde{\lambda}_\alpha = 0.
\]
Let
\[
\xi_a = \sum_{i=1}^n \xi^i{}_a {\partial \over \partial z^i}
\]
be a local frame for $T^{1,0} X$ defined in $\Omega_\delta$, which is orthonormal with respect to the metric $\alpha$, and such that $\{ \xi_a \}_{a=1}^{n-1}$ span the holomorphic tangent space of the level sets $\{ D(z) = const \}$. We also arrange this frame such that $\xi_a(0) = {\partial \over \partial z^a}$ for $a \in \{1, \dots, n-1 \}$. To see such a frame exists, define the $(1,0)$ form $\del D$, and consider the map
\[
\del D: T^{1,0}X\rightarrow \mathbb{C}
\]
Since $\del D \ne 0$ (close enough to $0$), the kernel of this map defines a $n-1$ dimensional subbundle $V\subset T^{1,0}X$, which inherits the metric $\alpha|V$.  Now take a smooth, unitary frame for this bundle.
\smallskip
\par Define a local operator on $(1,1)$ forms as follows; if $\beta = \sqrt{-1} \beta_{\bar{k} j} dz^j \wedge d \bar{z}^k$ by
\[
\Lambda_\nu \beta = {1 \over \sqrt{-1}} \sum_{a=1}^{n-1} \nu_a \beta(\xi_a,\overline{\xi_a}) = \sum_{a=1}^{n-1} \nu_a \xi^j{}_a \overline{\xi^k{}_a} \beta_{\bar{k} j}.
\]

The main estimate for the barrier function $w$ is the following:

\begin{lem} \label{lemma-Lambda-w} Let $1/2 \leq t_0 \leq 1$. There exists parameters $\tau$, $M$, $\ell_i$, $\delta$, depending on $(X,\alpha)$, $\chi$, $\inf_X\psi$, $\underline{u}$, such that
\[
\Lambda_\nu (\chi + \ddb w) \leq 0.
\]
in $\Omega_\delta$.
\end{lem}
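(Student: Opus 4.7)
The plan is to verify $\Lambda_\nu(\chi+\ddb w) \leq 0$ on $\Omega_\delta$ by arranging three successive conditions at the origin: the value vanishes, the gradient vanishes via the choice of $\ell_i$, and the quadratic remainder is absorbed by the $MD^2$ term for suitable $\tau, M, \delta$. For the value, I use that $\xi_a(0) = \p/\p z^a$ for $a \leq n-1$ reduces $\Lambda_\nu$ at the origin to the weighted tangential trace $\sum_a \nu_a (\cdot)_{\bar a a}(0)$. Because $\rho = -x^n + O(|z|^2)$, we have $\rho_a(0) = \rho_{\bar a}(0) = 0$ for $a < n$, so the contribution of $L\rho := (\ell_i z^i + \ell_{\bar i}\bar z^i)\rho$ vanishes; similarly $D(0) = 0$ and $D_a(0) = 0$ for $a < n$, so $\p_a \p_{\bar a}(MD^2)(0) = 2M(DD_{\bar a a} + |D_a|^2)(0) = 0$. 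Hence
\[
\Lambda_\nu(\chi+\ddb w)(0) = \tfrac{1}{t_0}\sum_a \nu_a (A_{t_0})_{\bar a a} = \tfrac{1}{t_0}\sum_a \nu_a \tilde\lambda_a = 0,
\]
by the definition $\nu_a \propto \sigma_{\ell-1}(\tilde\lambda|a)$ together with the identity $\sum_a \sigma_{\ell-1}(\tilde\lambda|a)\tilde\lambda_a = \ell\sigma_\ell(\tilde\lambda) = 0$.

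Next I Taylor-expand $\Lambda_\nu(\chi+\ddb w)(z)$ to first order at $0$ and use $\ell_i$ to cancel the gradient. The gradient at $0$ decomposes into contributions from: (i) derivatives of $\xi^j{}_a$, controlled via the tangency $\xi_a(D) = 0$ which on differentiation gives $\p_b \xi^n{}_a(0) = 2\rho_{ab}(0)$ and an analogous $\p_{\bar b}$ relation involving $\tau$; (ii) derivatives of the bulk data $\chi + \ddb\underline{u} - (\eta/t_0)\ddb\rho$; (iii) the $M$-dependent piece $M\delta_{bn}(\tau - \Lambda_\nu(\ddb\rho)(0))$ coming from $2MD\,\Lambda_\nu(\ddb D)$; and (iv) the $\ell$-linear contribution from $\ddb(L\rho)$, which produces $\ell_b \Lambda_\nu(\ddb\rho)(0) + \sum_a \nu_a[\ell_a \rho_{\bar a b}(0) + \ell_{\bar a}\rho_{ab}(0)]$ and an analogous expression for $\p_{\bar b}$. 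The $n$ complex parameters $\ell_i$ provide $2n$ real degrees of freedom, exactly matching the $2n$ real coefficients in the gradient of the real-valued function $\Lambda_\nu(\chi+\ddb w)$; solving the resulting linear system uniquely determines $\ell_i$, with a bound depending on $(X,\alpha), \chi, \underline{u}, \eta, t_0, \tau, M$.

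Finally I handle the quadratic residual. The key structural identity
\[
\Lambda_\nu(\sqrt{-1}\,\p D \wedge \dbar D) = \sum_a \nu_a |\xi_a(D)|^2 = 0
\]
(from $\xi_a(D) = 0$) gives $M\Lambda_\nu(\ddb D^2) = 2MD\,\Lambda_\nu(\ddb D)$. On $\Omega_\delta \subset \{\rho \leq 0\}$ we have $D \geq \tau|z|^2$, and after Steps 1--2 the rest of $\Lambda_\nu(\chi+\ddb w)(z)$ is bounded by $C|z|^2$ for $C$ depending only on the data. By choosing $\tau$ small, then $M$ large, and finally $\delta$ small, we arrange $2MD\,\Lambda_\nu(\ddb D) + C|z|^2 \leq 0$ throughout $\Omega_\delta$. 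The main obstacle is exactly this coordination: the tangency $\xi_a(D) = 0$ kills the usual $|\nabla D|^2$ positivity in $\p D \wedge \dbar D$, so the sign of the $MD^2$ contribution hinges on $\tau$ versus the tangential Levi term $\Lambda_\nu(\ddb\rho)(0)$; since $\ell_i$ itself depends on $M$ and $\tau$, the four parameters must be chosen in a specific order to close the estimate.
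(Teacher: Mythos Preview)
Your proposal has the right overall shape---vanishing value at $0$, killing the gradient with $\ell_i$, absorbing the remainder with $MD^2$---but there are two genuine gaps that prevent it from closing.

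First, and most critically, you never establish the positivity
\[
\Lambda_\nu(\ddb\rho)(0) = \sum_{\alpha=1}^{n-1}\nu_\alpha\,\rho_{\bar\alpha\alpha}(0) \;\geq\; a_1 > 0.
\]
This is the heart of the argument and the only place where the hypothesis $t_0 \geq 1/2$ and the subsolution $\underline{u}$ enter. The paper obtains it from $\sum_\alpha\nu_\alpha\tilde\lambda_\alpha = 0$, the Schur--Horn inequality $\sum_\alpha\nu_\alpha(\chi+\ddb\underline{u})_{\bar\alpha\alpha}(0) \geq \sum_\alpha\nu_\alpha\underline{\lambda'_\alpha} \geq a > 0$, and the uniform bound on $\eta$. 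Without $a_1>0$ you cannot guarantee that your linear system for $\ell_i$ is solvable (the dominant coefficient in your term (iv) is $\Lambda_\nu(\ddb\rho)(0)$), nor can you make $\Lambda_\nu(\ddb D)(0) = \tau - \Lambda_\nu(\ddb\rho)(0)$ negative by taking $\tau$ small. You note that ``the sign of the $MD^2$ contribution hinges on $\tau$ versus $\Lambda_\nu(\ddb\rho)(0)$'' but never pin down the sign of the latter.

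Second, your parameter coordination is circular. You feed the $M$-dependent gradient contribution (iii) into the system for $\ell_i$, so $|\ell|$ grows like $M$. The quadratic remainder constant $C$ then also grows like $M$ (through the $O(|\ell|\,|z|^2)$ errors in the $T_2$ and $T_3$ pieces), and the step ``$M$ large absorbs $C|z|^2$'' fails: you would need $M\tau a_1 \gtrsim C(M)$ with $C(M)$ linear in $M$, which forces a \emph{lower} bound on $\tau$ conflicting with the smallness needed for $\Lambda_\nu(\ddb D)<0$. The paper avoids this entirely by \emph{not} putting the linear part of $T_4 = 2MD\,\Lambda_\nu(\ddb D)$ into the $\ell_i$ system. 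Instead it chooses $\ell_i$, independent of $M$, to cancel only the linear parts of $T_1+T_2+T_3$, obtaining $T_1+T_2+T_3 \leq C|z|^2$ with $C$ independent of $M$; then the direct sign estimate $T_4 \leq 2M(\tau|z|^2)(-a_1/4) = -\tfrac{M\tau a_1}{2}|z|^2$ (from $D \geq \tau|z|^2$ and $\Lambda_\nu(\ddb D)\leq -a_1/4$ on $\Omega_\delta$) absorbs the remainder for $M$ large. That decoupling of $\ell_i$ from $M$ is the structural point your argument is missing.
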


\par \noindent {\it Proof:} By the definition of $A_{t_0}$, which we assume is diagonal, we have
\be \label{boundary-cone-id}
0 = \sum_\alpha \nu_\alpha \tilde{\lambda}_\alpha = t_0 \sum_\alpha \nu_\alpha (\chi_{\bar{\alpha} \alpha} + \underline{u}_{\bar{\alpha} \alpha}) - \eta \sum_\alpha \nu_\alpha \rho_{\bar{\alpha} \alpha}.
\ee
We know $\underline{\lambda'} \in \Gamma_{k-1} \subseteq \Gamma_\ell$. As currently defined, $\underline{\lambda'}$ corresponds to the eigenvalues of $(\chi_{\bar{\alpha} \beta} + u_{\bar{\alpha} \beta})(0)$. At other points, we consider the vector bundle $E \subset T^{1,0} \Omega_\delta$ defined by the kernel of the map
\[
\del \rho: T^{1,0}X\rightarrow \mathbb{C},
\]
which is non-degenerate in $\Omega_\delta$. Using the metric $\alpha|_E$ and the orthogonal projection $\pi: T^{1,0} \Omega_\delta \rightarrow E$, we get an induced hermitian endormorphism $h_{E}: E \rightarrow E$. We can then define $\underline{\lambda'}(p)$ to be the eigenvalues of this map.
\smallskip
\par By the lower bound for the subsolution (\ref{subsoln-lower-bdd}), the vector $\underline{\lambda'}$ lives in a bounded region and stays a fixed distance away from the boundary of the cone $\Gamma_\ell$. Therefore
\[
\sum_\alpha \nu_\alpha \underline{\lambda'_\alpha}(0) \geq \inf_{p \in \p X} \sum_\alpha \nu_\alpha \underline{\lambda'_\alpha}(p) \geq a > 0,
\]
where $a$ depends on $(X,\alpha)$ and $\chi+ \ddb \underline{u}$. By the Schur-Horn theorem (\ref{schur-horn-ineq}),
\[
\sum_{\alpha} \nu_\alpha (\chi_{\bar{\alpha} \alpha} + \underline{u}_{\bar{\alpha} \alpha}) \geq \sum_\alpha \nu_\alpha \underline{\lambda'_\alpha} \geq a.
\]
Therefore, by (\ref{boundary-cone-id}),
\[
0 \geq a t_0 - \eta \sum_\alpha \nu_\alpha \rho_{\bar{\alpha} \alpha}.
\]
Since we assume $t_0 \geq 1/2$, and $\eta$ is bounded above (\ref{eta-bound}), we have
\be \label{a-1}
\sum_{\alpha=1}^{n-1} \nu_\alpha \rho_{\bar{\alpha} \alpha} \geq a_1 > 0.
\ee
This is the key good term which will carry the barrier argument through. 
\smallskip
\par We now start computing the quantity stated in the lemma. First, we have
\bea
\partial_j \partial_{\bar{k}} w &=& \underline{u}_{\bar{k} j} - {\eta \over t_0} \rho_{\bar{k} j} + \ell_{\bar{k}} \rho_j + \ell_j \rho_{\bar{k}} \nonumber\\
&&+ (\ell_i z^i + \ell_{\bar{i}} \bar{z}^i) \rho_{\bar{k} j} + 2M \p_j D \p_{\bar{k}} D + (2M D) \p_j \p_{\bar{k}} D.
\eea
Therefore
\[
\Lambda_\nu (\chi + \ddb w) = T_1 + T_2 + T_3 + T_4,
\]
where
\bea
T_1 &=& {1 \over t_0} \sum_{a=1}^{n-1} \nu_a \xi^j{}_a \overline{\xi^k{}_a} [t_0 (\chi_{\bar{k} j} + \underline{u}_{\bar{k} j}) - \eta \rho_{\bar{k} j}], \nonumber\\
T_2 &=&  \sum_{a=1}^{n-1} (\nu_a \xi^j{}_a \overline{\xi^k{}_a} \ell_{\bar{k}} \rho_j + \nu_a \xi^j{}_a \overline{\xi^k{}_a} \ell_j \rho_{\bar{k}}), \nonumber\\
T_3 &=&\sum_{a=1}^{n-1} \nu_a \xi^j{}_a \overline{\xi^k{}_a} (\ell_i z^i + \ell_{\bar{i}} \bar{z}^i) \rho_{\bar{k} j}, \nonumber\\
T_4 &=& 2M \sum_{a=1}^{n-1}\nu_a \xi^j{}_a \overline{\xi^k{}_a} \partial_j D \partial_{\bar{k}}D + (2M D)\sum_{a=1}^{n-1} \nu_a \xi^j{}_a \overline{\xi^k{}_a} \partial_j \partial_{\bar{k}} D . \nonumber
\eea
We start with the term $T_1$. At the origin, we have
\bea
& \ &   {1 \over t_0} \sum_{a=1}^{n-1} \nu_a \xi^j{}_a \overline{\xi^k{}_a} (t_0 (\chi_{\bar{k} j} + \underline{u}_{\bar{k} j}) - \eta \rho_{\bar{k} j}) (0) \nonumber\\
&=& {1 \over t_0} \sum_{a=1}^{n-1} \nu_a (A_{t_0})_{\bar{a} a} = {1 \over t_0} \sum_{a=1}^{n-1} \nu_a \tilde{\lambda}_a = 0,
\eea
by choice of $\xi_a(0)=\p_a$ and since $A_{t_0}$ is diagonal. Therefore $T_1(0)=0$ and
\[
T_1 = m_i z^i + m_{\bar{i}} \bar{z}^i + O(|z|^2),
\]
where $m_i$ are bounded constants depending only on $(X,\alpha), \chi, \underline{u}$. 
\smallskip
\par The condition that the vector fields $\xi_a$ are tangential to the level sets of $D(z)$ gives the relation
\[
0 =\xi^j{}_a \p_j D = - \xi^j{}_a \rho_j + \tau \xi^j{}_a \bar{z}^j.
\]
Therefore
\[
T_2 = \tau \sum_{a=1}^{n-1} \nu_a \xi^j{}_a \overline{\xi^k{}_a}  (\ell_j z^k + \ell_{\bar{k}} \bar{z}^j).
\]
We expand
\[
T_2 =  \tau \sum_{a=1}^{n-1} \{ \nu_a \xi^j{}_a \overline{\xi^k{}_a}(0) + O(|z|) \} (\ell_j z^k + \ell_{\bar{k}} \bar{z}^j).
\]
Since $\xi_a(0) = \p_a$, we have
\[
T_2 = \tau \sum_{a=1}^{n-1} ( \nu_a \ell_a z^a + \nu_a \ell_{\bar{a}} \bar{z}^a) + O(|z|^2).
\]
Next, we write
\[
T_3 = (\ell_i z^i + \ell_{\bar{i}} \bar{z}^i) (\Lambda_\nu \ddb \rho(0)) + O(|z|^2).
\]
At the origin,
\[
\Lambda_\nu \ddb \rho (0)= \sum_{\alpha=1}^{n-1} \nu_\alpha \rho_{\bar{\alpha} \alpha}(0),
\]
hence
\[
T_3 = (\ell_i z^i + \ell_{\bar{i}} \bar{z}^i) \sum_{\alpha=1}^{n-1} \nu_\alpha \rho_{\bar{\alpha} \alpha}(0)  + O(|z|^2).
\]
Therefore
\bea
T_1 + T_2 + T_3 &=& 2 {\rm Re} \, \sum_{i=1}^{n-1} \{ m_i + \tau \nu_i \ell_i + \ell_i \sum_{\alpha=1}^{n-1} \nu_\alpha \rho_{\bar{\alpha} \alpha}(0)   \} z^i \nonumber\\
&&+ 2 {\rm Re} \, \{ m_n + \ell_n \sum_{\alpha=1}^{n-1} \nu_\alpha \rho_{\bar{\alpha} \alpha }(0) \} z^n + O(|z|^2).
\eea
For $i \in \{1,\dots,n\}$, choose
\[
\ell_i \left( \tau \nu_i + \sum_{\alpha=1}^{n-1} \nu_\alpha \rho_{\bar{\alpha} \alpha }(0) \right) = - m_i,
\]
where here we let $\nu_n=0$. This choice is possible for any $\tau>0$, since by estimate (\ref{a-1}) we have
\[
\tau \nu_i + \sum_{\alpha=1}^{n-1} \nu_\alpha \rho_{\bar{\alpha} \alpha }(0)  \geq a_1.
\]
Therefore, the linear terms cancel exactly, and we have
\be
T_1 + T_2 + T_3 \leq C |z|^2, \ \ |\ell_i| \leq {2 |m_i| \over a_1}.
\ee
Next, since $\xi_a$ is tangential to level sets of $D$,
\[
T_4 = (2MD) \Lambda_\nu \ddb D.
\]
At the origin,
\[
\Lambda_\nu \ddb D(0) = \sum_{\alpha=1}^{n-1} \nu_\alpha (-\rho_{\bar{\alpha} \alpha}(0) + \tau).
\]
Since we have the estimate (\ref{a-1}), we conclude
\[
\Lambda_\nu \ddb D(0) \leq -a_1 + \tau \sum \nu_a \leq - {a_1 \over 2}
\]
for $0<\tau \leq a_1/2$. For $\delta>0$ sufficiently small, we thus have
\[
\Lambda_\nu \ddb D \leq - {a_1 \over 4}
\]
in $\Omega_\delta$. Since $D \geq \tau |z|^2$,
\[
T_4 \leq -{M \tau a_1 \over 2} |z|^2.
\]
It follows that
\[
T_1 + T_2 + T_3 + T_4 \leq C |z|^2 - {M \tau a_1 \over 2} |z|^2 \leq 0,
\]
for $M \geq 2C (\tau a_1)^{-1}$. This completes the proof of Lemma \ref{lemma-Lambda-w}. $\qed$

\bigskip
\par We now return to our goal of establishing $u(z) \leq \Psi(z)$ in $\Omega_\delta$. Let $W = \alpha^{-1}(\chi + \ddb w)$ with eigenvalues $\mu_1 \geq \dots \geq \mu_{n}$. At a point $p \in \Omega_\delta$, we take new coordinates such that $\alpha_{\bar{k} j} = \delta_{kj}$ and $W^i{}_j = \mu_i \delta^i{}_j$. In these coordinates we write $\xi_a= \xi_a^{i}\del_{z^i}$ with $\xi{}^i{}_a$ a unitary matrix, and
\[
\Lambda_\nu (\chi+ \ddb w) = \sum_{a=1}^{n-1} \sum_{i=1}^n |\xi^i{}_a|^2 \nu_a \mu_i.
\]
Let $\nu_0=0$, and let $\xi_0$ be such that $\{ \xi_a \}_{a=0}^{n-1}$ is a local unitary frame for $T^{1,0}X$. Then
\[
\Lambda_\nu (\chi+ \ddb w) = \sum_{a=0}^{n-1} \sum_{i=1}^n |\xi^i{}_a|^2 \nu_a \mu_i,
\]
\[
0 = \nu_0 \leq \nu_1 \leq \dots \leq \nu_{n-1}, \ \ \ \ \mu_1 \geq \dots \geq \mu_n, \ \ \ \ \xi^\dagger \xi = I.
\]
We will follow here the argument of \cite{CNS3}. The matrix $Q^i{}_j = |\xi^i{}_{j-1}|^2$ is a doubly stochastic matrix, and by the Birkhoff-Von Neumann decomposition theorem it can be written as
\[
Q = \sum c_k P_k,
\]
where $c_k \geq 0$ satisfy $\sum_k c_k = 1$, and $P_k$ are permutation matrices. Then
\[
\Lambda_\nu (\chi+ \ddb w) = \sum_k c_k \sum_{i=1}^n \nu_{\beta_k(i-1)} \mu_i,
\]
where the $\beta_k$ are permutations. The minimal configuration is attained by
\[
\sum_{i=1}^n \nu_{\beta_k(i-1)} \mu_i \geq \sum_{i=1}^{n} \nu_{i-1} \mu_{i},
\]
hence Lemma \ref{lemma-Lambda-w} implies
\[
0 \geq \Lambda_\nu (\chi+ \ddb w) \geq \sum_{i=1}^{n} \nu_{i-1} \mu_{i}.
\]
Thus
\[
0 \geq \sum_{a=1}^{n-1} \nu_a \mu_{a+1},
\]
and the vector $(\mu_2, \dots, \mu_{n})$ is outside of $\Gamma_{k-1}\subset \mathbb{R}^{n-1}$ by (\ref{hyperplane}). It follows that $\mu = (\mu_1, \dots , \mu_n)$ is outside of $\Gamma_k\subset \mathbb{R}^n$. Let $\sigma = \inf_X \psi > 0$ and
\[
\Gamma_k{}^\sigma = \{ \lambda \in \Gamma_k : \sigma_k(\lambda) \geq \sigma \}.
\]
The eigenvalues of $W$ all lie in a bounded set of maximal radius $R$ determined by $(X,\alpha)$, $\chi$, $\underline{u}$. There exists an $\epsilon_0>0$ depending on $R$ and $\sigma$ such that $\mu + \epsilon_0 {\bf 1} \notin \Gamma_k{}^\sigma$ at all points in $X$. 
\smallskip
\par We have
\[
\chi_{\bar{k} j} + \Psi_{\bar{k} j} = (\chi_{\bar{k} j} +  w_{\bar{k} j}) + \epsilon \delta_{kj}.
\]
Let $\tilde{\mu} = (\tilde{\mu}_1,\dots,\tilde{\mu}_n)$ denote the eigenvalues of $\tilde{W} = \alpha^{-1} (\chi + \ddb \Psi)$, arranged in decreasing order as usual. By the Weyl inequality,
\[
\tilde{\mu}_i \leq \mu_i + \epsilon \| \alpha^{-1} \|_{\Omega_\delta} \leq \mu_i + \epsilon_0,
\] 
for $\epsilon = \epsilon_0 / \| \alpha^{-1} \|_{\Omega_\delta}$. Therefore 
\be \label{out-Gamma}
\tilde{\mu} \notin \Gamma_k{}^\sigma.
\ee
Next, we adjust our constants such that $u \leq \Psi$ on $\partial \Omega_\delta$. On $\p B_\delta \cap X$, 
\[
w - u \geq (\underline{u}-u) - {\eta \over t_0} \rho(z) + (\ell_i z^i + \ell_{\bar{i}} \bar{z}^i) \rho(z) + M (-\rho(z) + \tau \delta^2)^2 
\]
Since $t_0 \geq 1/2$, $\rho \leq 0$, and $\eta$ is bounded above, we have
\[
w - u \geq -C + M \tau^2 \delta^4.
\]
We previously required $M \geq 2 C (\tau a_1)^{-1}$, and we can increase $M$ to also guarantee $M \geq C(\tau^2 \delta^4)^{-1}$.  It follows that
\[
w - u \geq 0,
\]
on $\p B_\delta \cap X$.  Therefore
\[
\Psi - u \geq \epsilon \delta^2 - {\epsilon \over C_0} x^n \geq 0,
\]
for $C_0 \geq \delta^{-1}$ on $\p B_\delta \cap X$.
\smallskip
\par Next, we consider $B_\delta \cap \p X$. Here we have
\[
w-u = M \tau^2 |z|^4 \geq 0.
\]
Thus
\[
\Psi - u \geq \epsilon |z|^2 - {\epsilon \over C_0} x^n \geq \epsilon |z|^2 - {\epsilon \over C_0} O(|z|^2)
\]
since $x_n= O(|z|^2)$ as a consequence of $\rho = 0$, see e.g. (\ref{rho-expression}). We choose $C_0$ large enough such that $u \leq \Psi$ on $B_\delta \cap \partial X$. Therefore, $u \leq \Psi$ on $\partial \Omega_\delta$. 
\smallskip
\par Putting everything together, we have $u \leq \Psi$ on $\partial \Omega_\delta$ and $\tilde{\mu} \notin \Gamma_k{}^\sigma$ (\ref{out-Gamma}). By the maximum principle Lemma \ref{lem-max-princ}, we conclude $u \leq \Psi$ in $\Omega_\delta$.

\section{Blow-up argument}
In this section, we combine the second order estimate with a blow-up argument to obtain uniform bounds and prove Theorem \ref{thm-c2-uniform}.  We prove

\begin{prop} \label{prop-c1-est}
Let $(X,\alpha)$ be a compact Hermitian manifold with boundary. Let $\chi \in \Gamma_k(X,\alpha)$ be a $(1,1)$ form, $\psi \in C^\infty(X)$ a smooth function satisfying $\psi >0$, and $\phi \in C^\infty(\p X, \mathbb{R})$. Suppose $u \in C^4(\overline{X}, \mathbb{R})$ solves the equation
\[
\sigma_{k}(\lambda) = \psi, \ \ u|_{\p X} = \phi,
\]
where $\lambda \in \Gamma_k$ are the eigenvalues of $\chi + \ddb u$ with respect to $\alpha$. Suppose there exists a subsolution $\underline{u} \in C^\infty(\overline{X},\mathbb{R})$ satisfying
\[
\sigma_k(\underline{\lambda}) \geq \psi, \ \ \underline{u}|_{\p X} = \phi
\]
where $\underline{\lambda} \in \Gamma_k$ are the eigenvalues of $\chi + \ddb \underline{u}$ with respect to $\alpha$. Then
\[
 \| \nabla u \|_{L^\infty(X,\alpha)} \leq C,
\]
where $C$ depends on $(X,\alpha)$, $\underline{u}, \phi$, $|\psi|_{C^{2}(X)}, \inf_{X} \psi$, and $\chi$.
\end{prop}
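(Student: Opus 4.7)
The strategy is to argue by contradiction using a blow-up, in the template of the closed case~\cite{DK17,HMW} and the Monge--Amp\`ere argument of~\cite{Bouck}. The three inputs are: the boundary gradient bound from Lemma~\ref{lem:c0-c1-bdry}, the second order estimate $\|\ddb u\|_{(X,\alpha)}\le CK$ from Proposition~\ref{prop: c2-est} with $K=1+\|\nabla u\|_\alpha^2$, and the Liouville theorem of Dinew--Ko\l odziej~\cite{DK17} for bounded $k$-subharmonic solutions of $\sigma_k(\ddb v)=0$ on $\C^n$. The crucial feature of the second order estimate is that it scales quadratically in $\|\nabla u\|$, matching exactly the quadratic scale of the natural parabolic-type rescaling $w\mapsto p+w/L$.

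Suppose the conclusion fails. Then there exists a sequence of solutions $u_j$ of the hypotheses with $L_j:=\sup_X|\nabla u_j|_\alpha\to\infty$, and by Lemma~\ref{lem:c0-c1-bdry} any points $p_j\in X$ attaining the suprema must eventually lie in the interior of $X$. Choose a holomorphic chart near $p_j$ with $\alpha_{\bar kj}(p_j)=\delta_{kj}$, and set
\[
v_j(w)=u_j(p_j+w/L_j)-u_j(p_j).
\]
Combining Lemma~\ref{lem:c0-c1-bdry} and Proposition~\ref{prop: c2-est} one checks, uniformly in $j$, that $v_j(0)=0$, $|v_j|\le C$, $|\nabla_w v_j|\le 1$ with $|\nabla_w v_j(0)|=1$, and $|\ddb_w v_j|\le C$. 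Using homogeneity of $\sigma_k$, the equation becomes
\[
\sigma_k\bigl(\alpha_{(j)}^{-1}\bigl[L_j^{-2}\chi_{(j)}+\ddb_w v_j\bigr]\bigr)=L_j^{-2k}\psi_{(j)},
\]
with subscript $(j)$ denoting evaluation at $p_j+w/L_j$.

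Passing to a subsequence, Arzel\`a--Ascoli produces a $C^{1,\alpha}_{\rm loc}$ limit $v_\infty$. If $L_j\,d(p_j,\p X)\to\infty$, the limit is defined on all of $\C^n$; it is bounded and $k$-subharmonic (being a limit of $k$-admissible functions with $L_j^{-2}\chi_{(j)}\to0$), satisfies $\sigma_k(\ddb v_\infty)=0$ in the pluripotential sense, and has $|\nabla v_\infty|\le 1$ with $|\nabla v_\infty(0)|=1$. The Liouville theorem of Dinew--Ko\l odziej then forces $v_\infty$ to be constant, contradicting $|\nabla v_\infty(0)|=1$ and establishing the gradient bound.

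The main technical obstacle is to rule out the remaining possibility that $L_j\,d(p_j,\p X)$ stays bounded, in which case the blow-up region is only a half-space and a direct appeal to the Liouville theorem on $\C^n$ is not available. A standard device to avoid this is to select $p_j$ not as the maximizer of $|\nabla u_j|_\alpha$ but of a weighted quantity such as $d(p,\p X)^\gamma|\nabla u_j|_\alpha(p)$ for a suitable $\gamma>0$; since $|\nabla u_j|_\alpha$ is uniformly bounded on $\p X$ by~\eqref{bdd-grad}, the weighted maximum is interior, and the first-order balance at $p_j$ combined with $L_j\to\infty$ forces $L_j\,d(p_j,\p X)\to\infty$. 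With this modification the argument above goes through and completes the proof.
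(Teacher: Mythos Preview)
Your overall architecture matches the paper's: contradict, rescale by $L_j=\sup|\nabla u_j|$, use the quadratic $C^2$ estimate of Proposition~\ref{prop: c2-est} to get uniform $C^{1,\gamma}$ bounds on the rescaled functions, and invoke the Dinew--Ko\l odziej Liouville theorem when the limit lives on all of $\C^n$.  The interior case and the case $L_j\,d(p_j,\partial X)\to\infty$ are handled exactly as in the paper.

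The gap is in the remaining case $L_j\,d(p_j,\partial X)\le C$.  Your proposed device---replace $p_j$ by a maximizer of $d(p,\partial X)^\gamma|\nabla u_j|_\alpha(p)$ and use a ``first-order balance'' to force $L_j d(p_j,\partial X)\to\infty$---does not close as written.  The first-order condition at such a maximizer yields $\gamma|\nabla u_j|(p_j)/d(p_j)=|\nabla|\nabla u_j||(p_j)$, and to exploit it you would need control of the \emph{full real} Hessian $D^2u_j$, whereas Proposition~\ref{prop: c2-est} only bounds the complex Hessian $\ddb u_j$.  Even granting $|D^2u_j|\le CL_j^2$ (which you do not have), the balance gives only $d(p_j)\gtrsim L_j^{-1}$, i.e.\ $L_j d(p_j)\gtrsim 1$, not divergence; you are back to a half-space limit.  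There is also no argument that the weighted supremum itself tends to infinity.

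The paper disposes of this case by a different, direct mechanism that does not require any Liouville theorem on a half-space.  It recenters the blow-up at the nearest boundary point $y_j$, obtains $C^{1,\gamma}$ compactness on exhausting balls contained in a half-space, and then uses the sandwich $\underline{u}\le u_j\le b$ from Lemma~\ref{lem:c0-c1-bdry} (with $b$ the $\sigma_1$-harmonic extension of $\phi$).  After rescaling, both barriers converge locally to the constant $\phi(p_\infty)$, so the limit $u_\infty$ is forced to be constant, contradicting $|\nabla u_\infty|(0,\ldots,0,\sqrt{-1}L)=1$.  This squeeze is the missing idea in your treatment; the weighted-distance trick is not a substitute for it here.
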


\begin{proof}
We argue by contradiction. Suppose there exists a sequence of functions $u_i \in C^4(\overline{X},\mathbb{R})$ solving
\[
\sigma_{k}(\lambda(u_i)) = \psi_i, \ \ \lambda \in \Gamma_{k}
\]
with boundary conditions
\[
u_i|_{\p X} = \phi,
\]
with $\psi_i$ uniformly bounded in $C^{2}$ with $\inf_{X} \psi_i \geq a >0$.  We also assume that $\underline{u}$ is satisfies 
\[
\underline{u}|_{\del X} = \phi, \qquad \sigma_{k}(\underline{\lambda}) \geq \psi_i
\]
for all $i$.  To obtain a contradiction we assume there are points $p_i \in X$ with
\[
\| \nabla u_i \|_{L^{\infty}} = |\nabla u_i| (p_i) = M_i,
\]
and $M_i \rightarrow \infty$. After taking a subsequence we may assume that $p_i \rightarrow p_\infty$ for some point $p_\infty \in \overline{X}$.  By Proposition~\ref{prop: c2-est} we have
\be \label{scaled-c2}
\|\ddb u_i \|_{L^{\infty}(X)} \leq C (1+M_i^2)
\ee
for a uniform constant $C$, independent of $i$.
\bigskip
\par Case 1: We begin with the case when $p_\infty$ is in the interior of $X$. Choose a small coordinate ball centered at $p_\infty$ disjoint from the boundary centered at $p_\infty$, and such that $\alpha$ is the identity at $0$. We can assume that all $p_i$ are within this coordinate ball. Let $R>0$, and define $\hat{u}_i : B_R(0) \rightarrow \mathbb{R}$ by
\[
\hat{u}_i(z) = u_i(M_i^{-1} z + p_i),
\]
which is well-defined for all $i$ such that $M_i$ is large enough.  We have
\be \label{blow-up1}
| \nabla \hat{u}_i |(0) = 1,
\ee
and by (\ref{scaled-c2}),
\[
\| \hat{u}_i \|_{L^\infty(B_{R}(0))} + \| \Delta \hat{u}_i \|_{L^\infty(B_{R}(0))} \leq C.
\]
Consequently, standard elliptic theory gives for any $0<\gamma<1$, the estimate
\be \label{blow-up2}
\| \hat{u}_i \|_{C^{1,\gamma}(B_{\frac{R}{2}}(0))} \leq C.
\ee
Fix $\gamma\in (0,1)$.  After taking a subsequence we have $\hat{u}_i \rightarrow u_\infty$ in $C^{1,\frac{\gamma}{2}}(B_\frac{R}{2}(0))$. In fact, by letting $R \rightarrow \infty$, and taking a diagonal subsequence we have a bounded function
\[
u_\infty : \mathbb{C}^n \rightarrow \mathbb{R},
\]
such that $\hat{u}_i \rightarrow u_\infty$ in $C^{1,\frac{\gamma}{2}}$ on compact sets and
\[
|\nabla u_\infty|(0)=1.
\]
If we change coordinates and let
\[
w = M_i(z - p_i),
\]
then the equation satisfied by $\hat{u}_i$ is
\bea
& \ & \left({1 \over M_i^2} \chi_{\bar{k} j}(z) \sqrt{-1} dw^j \wedge d \bar{w}^k + \ddb \hat{u}_i \right)^k \wedge \left( {1 \over M_i^2} \alpha_{\bar{k} j} (z) \sqrt{-1} dw^j \wedge d \bar{w}^k \right)^{n-k} \nonumber\\
&=& \psi_i(z) \left({1 \over M_i^2} \alpha_{\bar{k} j} (z) \sqrt{-1} dw^j \wedge d \bar{w}^k \right)^{n}. \nonumber
\eea
Since the $\hat{u}_i$ converge locally uniformly, we can take a limit (e.g. \cite{Dem} Chapter III, Cor. 3.6, \cite{DK17}) of the equation and obtain
\[
(\ddb u_\infty)^{k} \wedge \beta^{n-k} = 0,
\]
in the Bedford-Taylor sense \cite{BT}, or B\l ocki \cite{Blocki05}, where
\[
\beta = \sum_{p=1}^n \sqrt{-1} dw^p \wedge d \bar{w}^p.
\]
The fact that $u_\infty$ is bounded and non-constant violates the Liouville theorem of Dinew-Ko\l odziej \cite{DK17}, giving a contradiction.
\bigskip
\par Case 2: We now address the case when $p_\infty \in \partial X$. Let $\Omega \subset X$ be a coordinate chart centered at $p_\infty$, making $\alpha_{\bar{k}j}(p_{\infty})= \delta_{\bar{k}j}$.  That is, $\Omega$ is identified with the subset $\{z\in B_{2s} : \rho(z) \leq 0\} \subset \mathbb{C}^n$, where $B_{2s}\subset \mathbb{C}^n$ is the euclidean ball of radius $2s$ centered at $0=p_{\infty}$, and $\rho: B_{2s} \rightarrow \mathbb{R}$ is a boundary defining submersion (ie. $\del X \cap \Omega = \{ \rho =0\}$, and $X\cap \Omega \subset \{ \rho \leq 0\}$).   By going far enough along in the sequence, we may assume that all $p_i$ are in $\Omega$, and $|p_i|<s$ in local coordinates.  For $i$ sufficiently large, there is a unique point $y_i \in \del X \cap \Omega $ minimizing the euclidean distance between $p_i$ and $\del X$.  Let $r_i= |p_i-y_i|$, and note that $r_i \rightarrow 0$ as $i\rightarrow \infty$.  By the triangle inequality we have $y_i\rightarrow p_{\infty}$.  As before, we define
\be
\hat{u}_i(z) = u_i (M_i^{-1} z + p_i). 
\ee
We have
\be \label{blow-up3}
| \nabla \hat{u}_i |(0) = 1,
\ee
and by (\ref{scaled-c2}),
\[
\| \hat{u}_i \|_{L^\infty(\Omega_i)} + \| \Delta \hat{u}_i \|_{L^\infty(\Omega_{i})} \leq C.
\]
where $\Omega_i$ is the set of $z$ such that $M_i^{-1} z + p_i \in B_{2s} \cap \{\rho \leq 0\}$.  Let $\hat{\rho}_i(z) = \rho(M_i^{-1}z+p_i)$.  Since $|p_i|<s$ we have that $\Omega_i \supset B_{sM_i} \cap \{\hat{\rho}_i \leq 0\}$.  Consequently, standard elliptic theory (see, for example \cite[Chapter 8]{GT}) gives for any $0<\gamma<1$, the estimate
\be \label{blow-up4}
\| \hat{u}_i \|_{C^{1,\gamma}(B_{\frac{sM_i}{2}}\cap \{\hat{\rho}_{i} \leq 0\})} \leq C.
\ee
for a uniform constant $C$ independent of $i$.
Fix $\gamma \in(0,1)$.  There are now two cases, depending on $\liminf _{i\rightarrow \infty}M_ir_i$.
\bigskip
\par Case 2a:  Assume that $\liminf _{i\rightarrow \infty}M_i r_i = +\infty$.  Then, after passing to a subsequence, $\hat{u}_i(z)$ is defined on the ball $B_{\frac{M_ir_i}{2}} \subset B_{\frac{sM_i}{2}}\cap \{\hat{\rho}_{i} \leq 0\}$, and $M_ir_i\rightarrow \infty$.  Therefore, taking further subsequences we have that  $\hat{u}_i$ converges in $C^{1,\frac{\gamma}{2}}$ on compact sets to $u_\infty \in C^{1,\frac{\gamma}{2}}(\mathbb{C}^n)$.  As in Case 1, this contradicts the Liouville theorem of Dinew-Ko\l odziej \cite{DK17}.
\bigskip
\par Case 2b:  Assume that $\liminf_{i\rightarrow \infty} M_ir_i = L \in [0, +\infty)$.  Up to taking a subsequence, we can assume that $\lim_{i\rightarrow \infty} M_ir_i = L$.  Choose a ball of radius $r^*$ centered at a point $y^*$ such that $\overline{B_{r^*}(y^*)} \subset B_{s}\cap \{\rho \leq 0\}$ and $B_{r^*}(y^*)$ is tangent to $\{\rho=0\}$ at $p_{\infty}$.  After possibly shrinking $r^{*}$ slightly (and moving $y^*$ accordingly) we can assume that, for $i$ sufficiently large there are points $y_i^{*}$ such that $\overline{B_{r^*}(y_i^*)} \subset B_{s}\cap \{\rho \leq 0\}$ and  $B_{r^*}(y_i^*)$ is tangent to $\{\rho=0\}$ at $y_i$.  Furthermore, $y_i^*\rightarrow y^*$ as $i\rightarrow \infty$ since $y_i \rightarrow p_{\infty}$.  For convenience, redefine $\hat{u}_i$ so that $y_i$ is the origin, and $p_i= (0,\ldots,0, \sqrt{-1}M_ir_i)$.  That is, define
\[
\hat{u}_i = u_i(M_i^{-1}z+ y_i).
\]
and perform a unitary transformation to achieve $p_i= (0,\ldots,0, \sqrt{-1}M_ir_i)$.  Now, since $\overline{B_{r^*}(y_i^*)}\subset B_{s}\cap \{\rho \leq 0\}$ arguing as in case 2a, we obtain uniform $C^{1,\gamma}$ bounds on closed balls $\overline{\hat{B}_{i}}$ of radius $M_ir^*$ contained in the closed upper half-plane $\{{\rm Im}\, z_n\geq  0\}$, and tangent to $\{{\rm Im}\, z_n=0\}$ at the origin.  Furthermore, we have
\[
\hat{u}_{i}(0) = \phi(y_i), \qquad |\nabla \hat{u}_i|(0,\ldots, 0, \sqrt{-1}M_ir_i) =1.
\]
Since the balls $\hat{B}_i$ exhaust the upper half plane $\{{\rm Im}\, z_n > 0\}$, after taking a subsequence we can assume that $\hat{u}_{i}$ converges uniformly in $C^{1,\frac{\gamma}{2}}$ on compact sets of $\{{\rm Im},z_n >0\} \cup \{0\}$ to a limit $u_{\infty}: \{{\rm Im}\, z_n > 0\} \cup \{0\} \rightarrow \mathbb{R}$.  Furthermore, since $M_ir_i \rightarrow L$, and the convergence is in $C^{1,\frac{\gamma}{2}}$ we have
\begin{equation}\label{eq: bdry-blow-up-grad}
|\nabla u_{\infty}|(0,\ldots,0, \sqrt{-1}L)=1.
\end{equation}
\smallskip

\par As before we let $b$ denote the function solving
\[
\alpha^{n-1}\wedge (\chi + \ddb b)=0 \qquad b|_{\del X}= \phi.
\]
By the comparison principle we have
\[
\underline{u} \leq u \leq b.
\]
Let $\hat{\underline{u}}_i = \underline{u}(M_i^{-1}z_i + y_i)$, and $\hat{b}_i = b(M_i^{-1}z + y_i)$, so that
\begin{equation}\label{eq: blow-up-upper-lower-bd}
\hat{\underline{u}}_i \leq \hat{u}_{i} \leq \hat{b}_i, \qquad \hat{\underline{u}}_i(0)= \hat{u}_{i} (0)= \hat{b}_i(0) = \phi(y_i)
\end{equation}

 One easily checks that the sequences $\hat{\underline{u}}_i$ and $\hat{b}_i$ converge in $C^{1,\frac{\gamma}{2}}$ on compact sets of $\{{\rm Im},z_n >0\} \cup \{0\}$ to constant functions $\underline{u}_{\infty} = \phi(p_{\infty}) = b_{\infty}$.  Thanks to~\eqref{eq: blow-up-upper-lower-bd} we conclude that $u_{\infty} = \phi(p_{\infty})$ is a constant, but this contradicts~\eqref{eq: bdry-blow-up-grad}.

\end{proof}

Finally, Theorem~\ref{thm-c2-uniform} and hence also Theorem~\ref{thm-main} follow by combining Proposition~\ref{prop: c2-est} and Proposition~\ref{prop-c1-est}.

\end{document}